\documentclass[a4paper,notitlepage,twoside,reqno,10pt]{amsart}

\usepackage{bbm,pifont,latexsym}

\usepackage{dcolumn,indentfirst}
\usepackage[bookmarksopen=true,linktocpage=true,pdfstartview={XYZ null null 1.25}]{hyperref}
\usepackage{amsmath,amssymb,amscd,amsthm,amsfonts,mathrsfs}
\usepackage{color,colortbl,graphicx,xcolor,graphics,subfigure,extarrows,caption2}
\definecolor{mygreen}{rgb}{0.9,0.9,0.9}
\usepackage{titletoc}

\usepackage{autonum}

\newtheorem{thm}{Theorem}[section]
\newtheorem{lem}[thm]{Lemma}
\newtheorem{cor}[thm]{Corollary}
\newtheorem{prop}[thm]{Proposition}

\theoremstyle{definition}
\newtheorem*{defi}{Definition}

\newtheorem*{rmk}{Remark}
\newtheorem*{ques}{Question}

\newcommand{\EC}{\widehat{\mathbb{C}}}
\newcommand{\C}{\mathbb{C}}
\newcommand{\D}{\mathbb{D}}
\newcommand{\T}{\mathbb{T}}
\newcommand{\R}{\mathbb{R}}
\newcommand{\Z}{\mathbb{Z}}
\newcommand{\N}{\mathbb{N}}

\newcommand{\A}{\mathbb{A}}

\newcommand{\MC}{\mathcal{C}}
\newcommand{\MH}{\mathcal{H}}
\newcommand{\MI}{\mathcal{I}}
\newcommand{\MM}{\mathcal{M}}
\newcommand{\MN}{\mathcal{N}}

\newcommand{\MQ}{\mathcal{Q}}

\newcommand{\ii}{\textup{i}}
\newcommand{\id}{\textup{id}}
\newcommand{\dist}{\textup{dist}}
\newcommand{\diam}{\textup{diam}}

\newcommand{\Io}{\textup{I}}
\newcommand{\It}{\textup{II}}
\newcommand{\Is}{\textup{III}}

\newcommand{\re}{\textup{Re}}
\newcommand{\im}{\textup{Im}}

\newcommand{\Rat}{\textup{Rat}}

\makeatletter\@addtoreset{equation}{section}\makeatother 

\begin{document}

\author[WEIYUAN QIU]{WEIYUAN QIU}
\address{School of Mathematical Sciences, Fudan University, Shanghai, 200433, People's Republic of China}
\email{wyqiu@fudan.edu.cn}

\author[FEI YANG]{FEI YANG}
\address{Department of Mathematics, Nanjing University, Nanjing, 210093, People's Republic of China}
\email{yangfei@nju.edu.cn}

\title[Uniformization and dimensions of Cantor circle Julia sets]{Quasisymmetric uniformization and Hausdorff dimensions of Cantor circle Julia sets}

\begin{abstract}
For Cantor circle Julia sets of hyperbolic rational maps, we prove that they are quasisymmetrically equivalent to standard Cantor circles (i.e., connected components are round circles). This gives a quasisymmetric uniformization of all Cantor circle Julia sets of hyperbolic rational maps.

By analyzing the combinatorial information of the rational maps whose Julia sets are Cantor circles, we give a computational formula of the number of the Cantor circle hyperbolic components in the moduli space of rational maps for any fixed degree.

We calculate the Hausdorff dimensions of the Julia sets which are Cantor circles, and prove that for any Cantor circle hyperbolic component $\MH$ in the space of rational maps, the infimum of the Hausdorff dimensions of the Julia sets of the maps in $\MH$ is equal to the conformal dimension of the Julia set of any representative $f_0\in\MH$, and that the supremum of the Hausdorff dimensions is equal to $2$.
\end{abstract}

\subjclass[2010]{Primary: 37F10; Secondary: 37F20, 37F35}

\keywords{Julia sets; Hausdorff dimension; quasisymmetric uniformization; Cantor circles; hyperbolic components}

\date{\today}

\thanks{This work was supported by National Natural Science Foundation of China (grant Nos. 11731003 and 12071210) and Natural Science Foundation of Jiangsu Province (grant No. BK20191246).}


\maketitle

{\setcounter{tocdepth}{1}
\tableofcontents
}

\section{Introduction}

The study of topological and geometric properties of the Julia sets of holomorphic functions is one of the important topics in complex dynamics. In this paper we study a class of Julia sets of rational maps with special topology: they are all homeomorphic to the Cartesian product of the middle third Cantor set and the unit circle, i.e., the \textit{Cantor circles}.
McMullen is the first one who constructed such kind of Julia sets \cite{McM88}, and his family of rational maps
\begin{equation}
f_\lambda(z)=z^q+\lambda/z^p, \text{ where } q\geq 2,~p\geq 1
\end{equation}
was referred as \textit{McMullen maps} later (see \cite{DLU05}, \cite{Ste06} and \cite{QWY12}).

Besides the McMullen maps, one can find the Cantor circle Julia sets in some other families of rational maps. For example, see \cite{HP12b}, \cite{XQY14}, \cite{FY15}, \cite{QYY15}, \cite{QYY16} and \cite{WYZL19}. In particular, in the sense of topological conjugacy on the Julia sets, \textit{all} the Cantor circle Julia sets have been found in \cite{QYY15}.

Besides \cite{HP12b}, only few geometric properties were studied for the Cantor circle Julia sets. In this paper we focus our attention on the two aspects of the Cantor circle Julia sets: quasisymmetric classification and the dimensions (including Hausdorff and conformal dimensions). We will give a quasisymmetric uniformization for all hyperbolic Cantor circle Julia sets and calculate the infimum and the supremum of the Hausdorff dimensions of the Julia sets in each Cantor circle hyperbolic component. As a by-product, we obtain an explicit computational formula of the numbers of the Cantor circle hyperbolic components in the moduli space of rational maps for any fixed degree.

\subsection{Statement of the results}

Let $(X,d_X)$ and $(Y,d_Y)$ be two metric spaces. Suppose that there exist two homeomorphisms $f:X\rightarrow Y$ and $\psi:[0,+\infty)\rightarrow [0,+\infty)$ such that
\begin{equation}
\frac{d_Y(f(x),f(y))}{d_Y(f(x),f(z))}\leq \psi\Big(\frac{d_X(x,y)}{d_X(x,z)}\Big)
\end{equation}
for any distinct points $x,y,z\in X$. Then we say that $(X,d_X)$ and $(Y,d_Y)$ are \textit{quasisymmetrically equivalent} to each other.

From the topological point of view, all Cantor circle Julia sets are the same since they are all topologically equivalent (homeomorphic) to each other. Hence a natural problem is to give a uniformization of the Cantor circle Julia sets in the sense of quasisymmetric equivalence. In this paper, we prove the following result.

\begin{thm}\label{thm:quasi-unif}
Let $f$ be a hyperbolic rational map whose Julia set $J(f)$ is a Cantor circle. Then $J(f)$ is quasisymmetrically equivalent to a standard Cantor circle.
\end{thm}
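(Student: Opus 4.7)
The plan is to exploit the self-similar Markov structure coming from hyperbolicity: $J(f)$ is controlled up to uniform quasiconformal distortion at every scale, and this allows a direct construction of a quasisymmetric map to a round-circle model. Hyperbolicity of $f$ supplies a finite Markov partition of a neighborhood of $J(f)$ by a family of ``fundamental'' annuli $A_1,\dots,A_N$, each bounded by two Jordan curves in $J(f)$, such that every component of $J(f)$ is the image of a boundary circle of some $A_i$ under a branch of $f^{-n}$ for some $n\geq 0$. Since $f$ is uniformly expanding on a neighborhood of $J(f)$, Koebe distortion applied to these inverse branches shows that every component of $J(f)$ is a $K$-quasicircle for a uniform $K$, and that the complementary annuli between adjacent level-$n$ preimage circles have moduli bounded below by a universal constant.

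With these uniform estimates in hand, I would construct a standard Cantor circle $J_0\subset\EC$ reproducing the abstract combinatorics of $J(f)$: starting from $N$ disjoint round annuli modelled on the $A_i$, recursively replace each round annulus by a finite union of smaller round annuli whose complementary round annuli have the same moduli as the corresponding complementary annuli of $f^{-n}$. The nested intersection is a Cantor family $J_0$ of round circles, and the Markov coding gives a canonical homeomorphism $h\colon J(f)\to J_0$ sending each puzzle piece of level $n$ on the $f$-side to the corresponding piece on the model.

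To show that $h$ is a quasisymmetry, I would use the three-point characterization together with doubling. For any distinct $x,y,z\in J(f)$ there is a level $n=n(x,y,z)$ at which all three points lie in a common puzzle piece while $y$ and $z$ already sit in different level-$(n+1)$ subpieces. By Koebe distortion, the shape of this puzzle piece is comparable, up to uniform quasiconformal dilatation, to its round counterpart in $J_0$, so the ratio $|x-y|/|x-z|$ on $J(f)$ is controlled by the ratio $|h(x)-h(y)|/|h(x)-h(z)|$ on $J_0$ via a universal function $\psi$ depending only on $K$ and the modulus lower bound. One also verifies that each component circle is \emph{bounded turning} in its own internal metric, so no pathologies arise when the three points fall on a single component.

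The main obstacle will be the scale-mixing part of the estimate, in which the three chosen points live on components of $J(f)$ of wildly different diameters. Koebe gives strong control within a single Markov piece, but when, say, $x,y$ sit on a tiny circle deep inside a much larger one containing $z$, one must control the relative position of deep components inside shallow ones. This reduces to showing that the moduli of all intermediate annuli are uniformly bounded above and below and that the peripheral circles of $J(f)$ are uniformly relatively separated, both of which follow from hyperbolicity and the finiteness of the Markov partition. Packaging these nested-scale estimates into a single distortion function $\psi$, in the spirit of the quasisymmetric uniformization theorems for Sierpi\'nski-carpet-like sets, is the technical heart of the proof.
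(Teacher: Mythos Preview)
Your approach is genuinely different from the paper's, and the comparison is instructive. The paper does not attempt to verify the three-point quasisymmetry condition directly. Instead, it builds a \emph{global} quasiconformal homeomorphism $\phi_\infty:\widehat{\mathbb{C}}\to\widehat{\mathbb{C}}$ that carries $J(f)$ onto the standard Cantor circle $J(\mathcal{C})$; quasisymmetry then comes for free, since a global quasiconformal map of the sphere restricts to a quasisymmetry on any subset. The construction is a pull-back argument: the modified IFS $\mathscr{L}(\mathcal{C})$ is first extended to a quasiregular map $F:\widehat{\mathbb{C}}\to\widehat{\mathbb{C}}$, an initial quasiconformal $\phi_0$ is chosen matching the two simply connected Fatou components to round disks, and one then lifts iteratively via $\phi_{k-1}\circ f = F\circ\phi_k$ on the annular pieces $A_i$. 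Because each $\phi_k$ agrees with $\phi_1$ on $f^{-(k-1)}(D_0\cup D_\infty)$, all $\phi_k$ share the same dilatation $K(\phi_1)$, and any normal-family limit $\phi_\infty$ conjugates $f$ to $F$ on $J(f)$.

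This buys two things your outline does not have. First, it sidesteps the scale-mixing estimates entirely: one never has to compare points on components of vastly different depths, because the quasiconformal map is already defined on the whole sphere and the distortion bound is uniform by construction. Second, it automatically lands on the paper's specific \emph{self-similar} target $J(\mathcal{C})$, whereas your $J_0$ is built by matching moduli level by level and is not obviously self-similar---so you would still owe a proof that your $J_0$ is quasisymmetrically equivalent to the paper's standard model. Your sketch also leaves unaddressed how $h$ is to be defined \emph{within} a single circle component (the Markov coding matches components, not points on them), and the Koebe estimates you invoke need care since the inverse branches live on annuli rather than disks. None of these is necessarily fatal, but the paper's global-conjugacy route avoids all of them at once with essentially no metric estimates.
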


The explicit definition of the ``standard" Cantor circles will be given in \S\ref{sec:quasi-unif} (see also Figure \ref{Fig:Cantor-circle-std}). Roughly speaking, a standard Cantor circle is the Cartesian product of a Cantor set and the unit circle, where this Cantor set is generated by an iterated function system whose elements are affine transformations in the logarithmic coordinate plane.
For the study of quasisymmetric uniformization of Cantor circle Julia sets of McMullen maps, one may refer to \cite{QYY18}.

\begin{figure}[!htpb]
  \setlength{\unitlength}{1mm}
  \centering
  \includegraphics[width=0.47\textwidth]{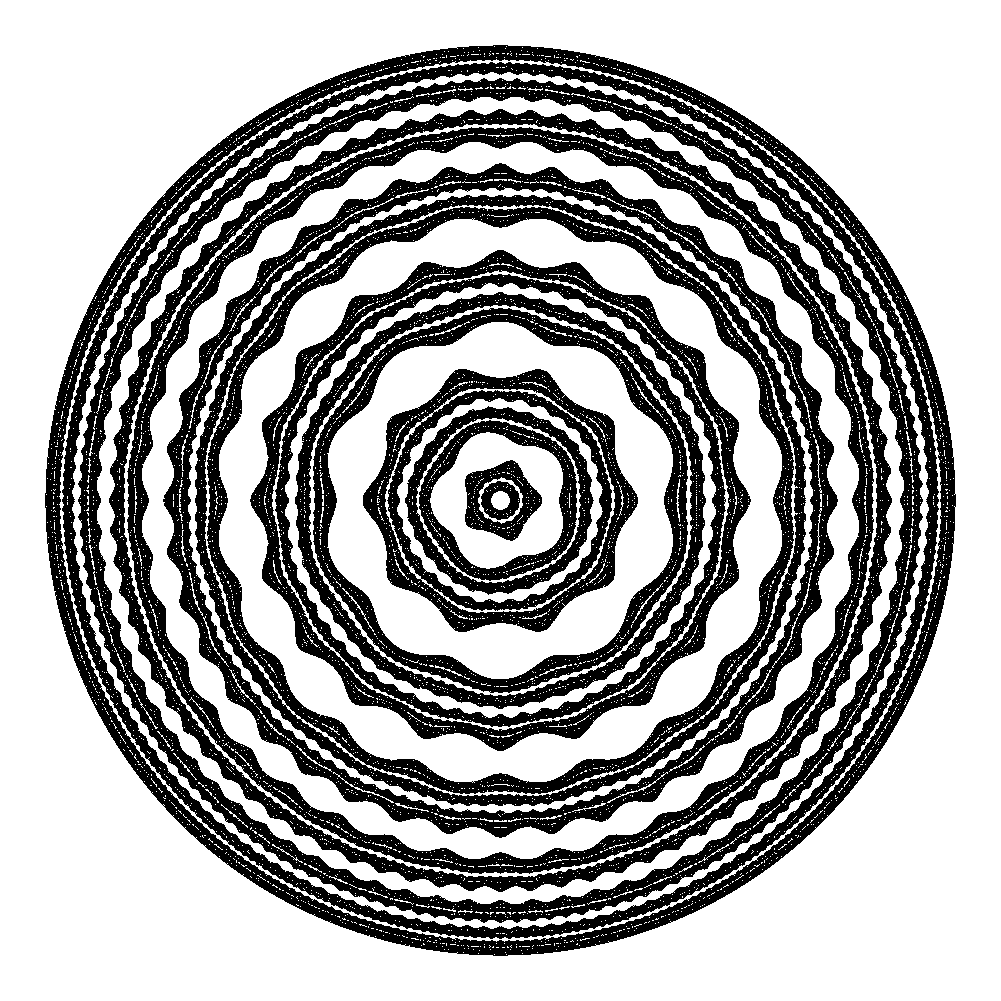}\quad
  \includegraphics[width=0.47\textwidth]{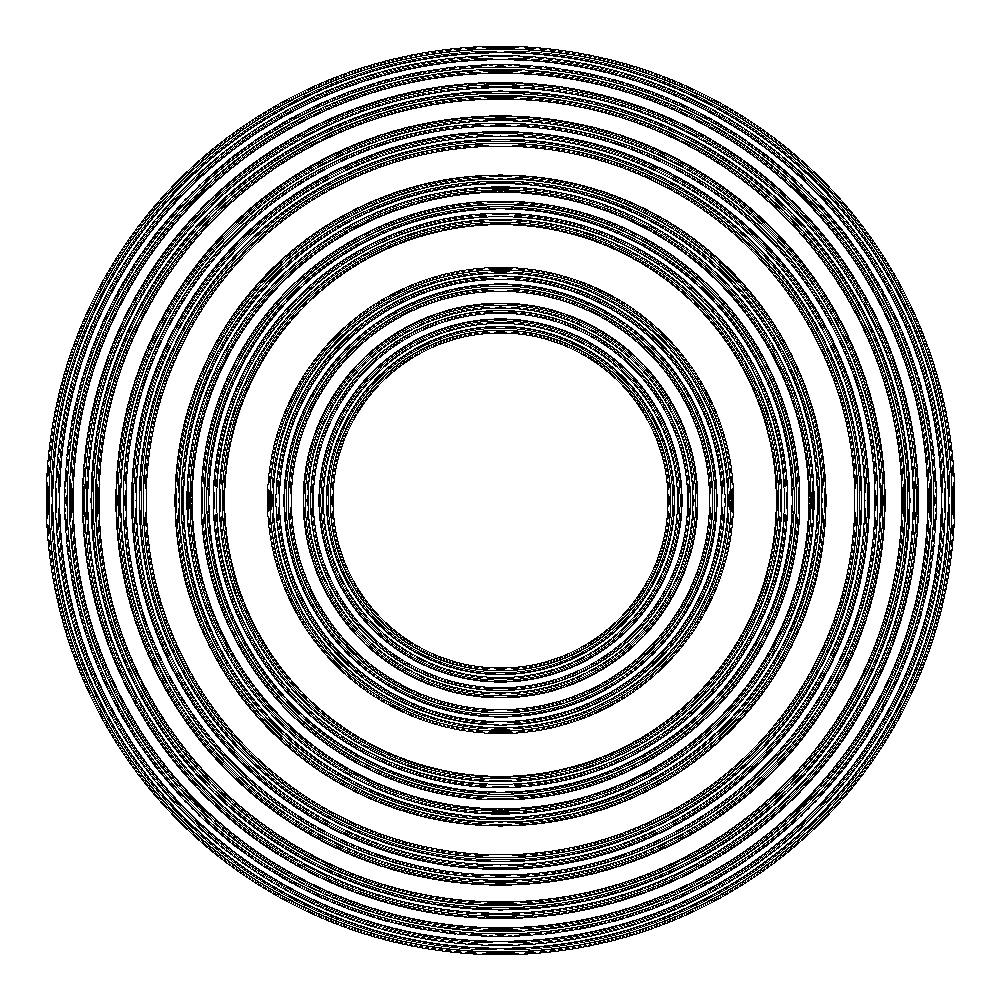}
  \caption{Left: The Julia set of the McMullen map $f(z)=z^2+10^{-5}/z^3$, which is a Cantor circle. Right: The standard Cantor circle corresponding to $f$, which is generated by the modified iterated function system $\{e^{-3}/z^3, z^2\}$ (see \S\ref{subsec-quasisym}).}
  \label{Fig:Cantor-circle-std}
\end{figure}

Recently, the quasisymmetric geometries of some other types of the Julia sets of rational maps have been studied. For example, the critically finite rational maps with Sierpi\'{n}ski carpet Julia sets was studied in \cite{BLM16}, and the corresponding results have been extended to some critically infinite cases \cite{QYZ19}. The group of all quasisymmetric self-maps of the Julia set of $z\mapsto z^2-1$ (i.e., the basilica) has been calculated in \cite{LM18} etc.

\vskip0.2cm
Let ${\rm Rat}_d=\mathbb{CP}^{2d+1}\setminus \{\textup{Resultant}=0\}$ be the space of rational maps of degree $d\geq2$.  The moduli space of $\Rat_d$ is $\mathcal{M}_d={\rm Rat}_d/{\rm PSL}_2(\mathbb{C})$, where ${\rm PSL}_2(\mathbb{C})$ is the complex projective special linear group. The M\"{o}bius conjugate class of $f\in {\rm Rat}_d$ in $\MM_d$ is denoted by $\langle f\rangle$. By abuse of notations, we also use $f$ to denote the equivalent class $\langle f\rangle$ for simplicity.
A rational map is called \textit{hyperbolic} if all its critical orbits are attracted by the attracting periodic cycles. Each connected component of all hyperbolic maps in $\MM_d$ is called a \textit{hyperbolic component}.

Let $f_1, f_2$ be two rational maps. We say that $(f_1,J(f_1))$ and $(f_2,J(f_2))$ are \emph{topologically conjugate} on their corresponding Julia sets $J(f_1)$ and $J(f_2)$ if there is an orientation preserving  homeomorphism $\phi:\EC\rightarrow\EC$ for which $\phi(J(f_1))=J(f_2)$ and $\phi\circ f_1=f_2\circ \phi$ on $J(f_1)$.
It was known from Ma\~{n}\'{e}-Sad-Sullivan \cite{MSS83} that if $f_1$ and $f_2$ are in the same hyperbolic component of $\MM_d$, then $f_1$ and $f_2$ are topologically conjugate on their corresponding Julia sets. In this paper we prove that the converse of this statement is also true when the Julia sets are Cantor circles.

\begin{thm}\label{thm:conj}
Let $f_1, f_2$ be two hyperbolic rational maps whose Julia sets are Cantor circles. Then $f_1$ and $f_2$ lie in the same hyperbolic component of $\MM_d$ if and only if they are topologically conjugate on their corresponding Julia sets.
\end{thm}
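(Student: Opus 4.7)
The ``if" direction is the classical Ma\~n\'e-Sad-Sullivan theorem \cite{MSS83}. For the converse, suppose $\phi:\EC\to\EC$ is an orientation-preserving homeomorphism with $\phi(J(f_1))=J(f_2)$ and $\phi\circ f_1 = f_2\circ \phi$ on $J(f_1)$. The plan is to upgrade $\phi|_{J(f_1)}$ to a global quasiconformal (QC) conjugacy $\Phi:\EC\to\EC$ between $f_1$ and $f_2$, and then to deform $f_1$ into $f_2$ through a holomorphic family of QC-equivalent rational maps, all confined to a single hyperbolic component of $\MM_d$.

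\textbf{Building the QC conjugacy.} Since $J(f_i)$ is a Cantor circle and $f_i$ is hyperbolic, every Fatou component of $f_i$ is a (pre)periodic quasidisk or quasiannulus, and $\phi$ induces a bijection between the Fatou components of $f_1$ and $f_2$ preserving periods and local return degrees. The first step is to build $\Phi$ on periodic Fatou components: on each periodic component $U$ of $f_1$ with period $p$ one introduces B\"ottcher (super-attracting case) or K\oe nigs (attracting case) linearizing coordinates for $f_1^p|_U$, and analogous coordinates on the paired component $\phi(U)\subset F(f_2)$, and then writes down a QC model $U\to \phi(U)$ conjugating $f_1^p|_U$ to $f_2^p|_{\phi(U)}$ and matching the boundary values of $\phi$ on $\partial U$; any mismatch of multipliers is absorbed by a radial QC distortion in linearizing coordinates. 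The second step is to extend equivariantly by pullback along the dynamics to every preperiodic Fatou component; since holomorphic pullback preserves the pointwise dilatation, $\Phi$ has uniformly bounded dilatation on $F(f_1)$. The third step is to paste across $J(f_1)$: Theorem~\ref{thm:quasi-unif} supplies the uniform quasisymmetric geometry of $J(f_i)$ needed for the pieces of $\Phi$ to glue continuously into a homeomorphism of $\EC$ agreeing with $\phi$ on $J(f_1)$, and the fact that $J(f_1)$ has Lebesgue measure zero (by hyperbolicity) makes the resulting homeomorphism automatically QC on $\EC$ with $\Phi\circ f_1 = f_2\circ \Phi$ everywhere.

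\textbf{Deformation inside the hyperbolic component.} Let $\mu := \bar\partial \Phi/\partial \Phi$; by construction $\mu$ is $f_1$-invariant. For $t\in\D$, let $\Phi_t$ be the normalized QC solution of the Beltrami equation with coefficient $t\mu$, and set $g_t := \Phi_t\circ f_1\circ \Phi_t^{-1}$. Then $\{g_t\}_{t\in\D}$ is a holomorphic family of rational maps of degree $d$ with $g_0 = f_1$ and $g_1$ M\"obius-conjugate to $f_2$; each $g_t$ is QC conjugate to $f_1$ and hence hyperbolic with a Cantor circle Julia set, so $t\in[0,1]\mapsto \langle g_t\rangle$ traces a path inside a single hyperbolic component of $\MM_d$ joining $\langle f_1\rangle$ to $\langle f_2\rangle$.

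\textbf{Main obstacle.} The hardest step will be the pasting of $\Phi$ across $J(f_1)$ with uniformly bounded dilatation: the pieces built on infinitely many distinct Fatou components must fit together compatibly despite the corresponding attracting multipliers of $f_1$ and $f_2$ being \emph{a priori} unequal and despite $\phi|_{J(f_1)}$ carrying only topological information. Theorem~\ref{thm:quasi-unif} is the key input that promotes the combinatorial data of $\phi|_{J(f_1)}$ to uniform quasisymmetric geometry on $J(f_i)$, without which a global QC extension with bounded dilatation could not be guaranteed.
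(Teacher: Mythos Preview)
Your outline takes a route genuinely different from the paper's, and there is a real gap at the step you call ``equivariant pullback''.

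\textbf{Comparison with the paper.} The paper (Theorem~\ref{thm:conj-resta}) does \emph{not} attempt to build a global quasiconformal conjugacy between $f_1$ and $f_2$. Instead it fixes, for each combinatorial type $\MC$, a single quasiregular model $F:\EC\to\EC$ extending the IFS $\mathscr{L}(\MC)$, and deforms each of $f_1,f_2$ separately, through hyperbolic rational maps, to the \emph{same} rational map $\xi_1\circ F\circ\xi_1^{-1}$. The substantive point is that the quasiregular extension $\widetilde{F}:=\phi_\infty\circ f\circ\phi_\infty^{-1}$ produced by the uniformization argument need not be homotopic to $F$ rel the boundaries of the critical annuli $D_i'$: by Lemma~\ref{lem:homotopy-Cui}, any two branched covers from an annulus to a disk with the same boundary data differ by a power of a partial Dehn twist $T_q$, and the paper absorbs these twists one annulus at a time via explicit partial-twist deformations (Step~3 of the proof of Theorem~\ref{thm:conj-resta}).

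\textbf{The gap in your argument.} Your pullback to the preperiodic \emph{annular} Fatou components runs head-on into this twist ambiguity. Suppose $\Phi$ has been built on the periodic disk $D_*$ with $\Phi|_{\partial D_*}=\phi|_{\partial D_*}$. Lifting $\Phi\circ f_1:\overline{D}_i\to\overline{D}_*'$ through the branched cover $f_2:\overline{D}_i'\to\overline{D}_*'$ does produce a homeomorphism $\Psi:\overline{D}_i\to\overline{D}_i'$ satisfying $f_2\circ\Psi=\Phi\circ f_1$, but $\Psi$ is only pinned down up to deck transformations on each boundary circle; choosing $\Psi=\phi$ on the inner boundary of $D_i$ does \emph{not} force $\Psi=\phi$ on the outer boundary --- the two can differ by a rotation through a $d_{i+1}$-th root of unity. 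When this discrepancy is nonzero your pieces fail to glue across $\partial D_i$, and nothing in Theorem~\ref{thm:quasi-unif} repairs it: the obstruction is homotopy-theoretic (the relative twist of two branched covers annulus~$\to$~disk), not metric. The paper's partial-twist deformations are exactly the mechanism that kills these discrepancies; your proposal supplies no analogue. A minor aside: you have also swapped the labels --- it is the ``only if'' direction (same component $\Rightarrow$ conjugate on Julia sets) that is Ma\~n\'e--Sad--Sullivan, and the ``if'' direction that requires work.
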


Theorem \ref{thm:conj} leads to an explicit computational formula of the number of Cantor circle hyperbolic components in $\MM_d$.

\begin{thm}\label{thm:number-CC}
The number of Cantor circle hyperbolic components in $\MM_d$ is a finite number $N(d)$ depending only on the degree $d\geq 5$, which can be calculated by
\begin{equation}\label{equ:N-d}
\begin{split}
N(d) = &~
 \sum_{n\geq 2}\,\sharp \left\{(d_1,\cdots,d_n)\in\N^n\,\left|~\sum_{i=1}^nd_i=d ~\text{and}~ \sum_{i=1}^n\frac{1}{d_i}<1\right.\right\}\\
&~ +\sum_{\textup{odd~} n\geq 3}\,\sharp
\left\{(d_1,\cdots,d_n)\in\N^n
\left|
\begin{array}{l}
\sum_{i=1}^nd_i=d,\,\sum_{i=1}^n\frac{1}{d_i}<1 \\
(d_1,\cdots,d_n)= (d_n,\cdots,d_1)
\end{array}
\right.
\right\}.
\end{split}
\end{equation}
\end{thm}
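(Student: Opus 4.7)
The plan is to combine Theorem~\ref{thm:conj} with the combinatorial classification of Cantor circle Julia set dynamics from \cite{QYY15}. By Theorem~\ref{thm:conj}, counting Cantor circle hyperbolic components in $\MM_d$ is equivalent to counting topological conjugacy classes of pairs $(f, J(f))$ with $f$ a hyperbolic rational map of degree $d$ and $J(f)$ a Cantor circle. The task therefore splits into two parts: parametrize these conjugacy classes by combinatorial data, and enumerate them.

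For the parametrization, the model from \cite{QYY15} tells us that each conjugacy class is governed by a Markov structure on $J(f)$ consisting of $n \geq 2$ nested annular ``levels'': $f$ sends the $i$-th level annulus over the fundamental annulus $A$ (bounded by the two distinguished Fatou components on the two sides of the Cantor circle) with degree $d_i$. The relation $\sum_{i=1}^n d_i = d$ expresses conservation of the total mapping degree, while $\sum_{i=1}^n 1/d_i < 1$ is a conformal modulus inequality: the $i$-th preimage annulus is a $d_i$-fold unramified cover of $A$ and hence has modulus $\Mod(A)/d_i$, and Gr\"{o}tzsch-type additivity for the $n$ nested, disjoint preimage subannuli of $A$ forces $\sum_i \Mod(A)/d_i < \Mod(A)$ precisely when the Markov construction yields a genuine Cantor set of circles rather than a continuum. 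Together with $d_i \in \N$, these are the conditions defining the tuples in \eqref{equ:N-d}.

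For the enumeration I argue that the ordered tuple $(d_1, \ldots, d_n)$ is itself a topological invariant: the two distinguished Fatou components are dynamically singled out, and the Markov degrees are then recorded in order along the nested annuli between them. In particular, a tuple and its reverse correspond to distinct conjugacy classes in general, so every admissible tuple contributes at least one hyperbolic component, giving the first summand of \eqref{equ:N-d}. The more delicate point, and the main obstacle, is to determine when a single admissible tuple realizes \emph{more} than one conjugacy class. My claim, which I will need to verify from the model dynamics of \cite{QYY15}, is that this happens precisely for palindromic tuples of \emph{odd} length $n \geq 3$: the middle-level annulus $A_{(n+1)/2}$ is then distinguished, and the Markov data admits an extra $\Z/2$ invariant recording whether the first-return dynamics on $A_{(n+1)/2}$ preserves or swaps the two sides of that middle annulus under the palindromic involution. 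Non-palindromic tuples admit no such involution, and even palindromes have no distinguished middle level on which it could act, so this extra bit of data appears only for odd palindromes.

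The main work will therefore be to construct both realizations for every odd palindromic tuple --- via a surgery/quasiconformal construction refining that of \cite{QYY15} --- and to show their topological non-conjugacy via a combinatorial invariant localized near the middle annulus. Granting this dichotomy, each odd palindromic tuple contributes exactly one additional hyperbolic component beyond the one already counted by the ordered enumeration, giving the second summand of \eqref{equ:N-d}. Finiteness of $N(d)$ is then immediate, since $d_i \geq 1$ and $\sum_i d_i = d$ force $n \leq d$ and each $d_i \leq d$, so the enumeration is over a finite set.
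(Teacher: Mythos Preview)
Your overall strategy and the final count are correct, but the reasoning you give for the first sum contains a genuine error that would derail the verification you plan. The claim that ``the two distinguished Fatou components are dynamically singled out,'' and hence that the ordered tuple $(d_1,\ldots,d_n)$ is itself a topological invariant, is \emph{false} for odd $n$. When $n$ is odd, the two simply connected Fatou components $D_0$ and $D_\infty$ are either both fixed by $f$ (call this Type~II) or swapped (Type~III); in either case the involution $z\mapsto 1/z$ conjugates $f$ to a map exchanging $D_0$ and $D_\infty$ and reversing the tuple, so $(d_1,\ldots,d_n)$ and $(d_n,\ldots,d_1)$ label the \emph{same} conjugacy class within a given type. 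Only for even $n$ is exactly one of $D_0,D_\infty$ fixed (Type~I), and there your orientation argument is valid.

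The bookkeeping in the paper runs instead as follows. For even $n$ there is a single type (Type~I) and the ordered tuple is a genuine invariant. For odd $n$ there are two dynamically distinct types (II: both simply connected components fixed; III: both of period two), but within each type the tuple is only defined up to reversal. Thus an odd non-palindromic reversal pair $\{(d_1,\ldots,d_n),(d_n,\ldots,d_1)\}$ contributes exactly two classes (one per type), which happens to equal the number of ordered tuples in the pair; an odd palindrome contributes two classes from a single ordered tuple, and this is precisely the second sum. Your ``preserves or swaps the two sides'' invariant is essentially the Type~II/III distinction and is defined for \emph{every} odd tuple, not only palindromes --- the palindromic condition enters only because that is when the type bit is not already absorbed by the reversal redundancy in the ordered-tuple count. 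Once you replace your orientation argument by this type trichotomy (and Lemma~\ref{lema:diff-comb}), the formula follows immediately with no surgery or new invariant needed.
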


It is easy to show that the Julia set of a rational map $f$ cannot be a Cantor circle if the degree of $f$ is less than $5$ (see Proposition \ref{no-comp-intro}).
See Table \ref{Tab_number-of-new} in \S\ref{sec-No-CC} for the list of $N(d)$ with $5\leq d\leq 36$. For example, $N(5)=\sharp\{(2,3),(3,2)\}=2$, $N(6)=\sharp\{(2,4),(3,3),(4,2)\}=3$ and $N(10)=\sharp\{(2,8)$, $(3,7)$, $(4,6)$, $(5,5)$, $(6,4)$, $(7,3)$, $(8,2)\}$ $+$ $\sharp\{(3,3,4)$, $(3,4,3)$, $(4,3,3)\}$ $+$ $\sharp\{(3,4,3)\}=11$.
For a characterization of the global topological structure of Cantor circle hyperbolic components, see \cite{WY17}.

\vskip0.2cm
The \textit{conformal dimension} $\dim_C(X)$ of a compact set $X$ is the infimum of the Hausdorff dimensions of all metric spaces which are quasisymmetrically equivalent to $X$. For a given hyperbolic component $\MH$ in $\MM_d$, it follows from \cite{MSS83} that all the Julia sets of the maps in $\MH$ are quasisymmetrically equivalent to each other and hence they have the same conformal dimension. There is a following

\begin{ques}
Let $\MH$ be a hyperbolic component in $\MM_d$ with $d\geq 2$ containing a map $f_0$.
Is it true: $\inf\limits_{f\in\MH}\dim_H (J(f))=\dim_C(J(f_0))$?
\end{ques}

In this paper we give an affirmative answer to this question for Cantor circle hyperbolic components. We prove the following result.

\begin{thm}\label{thm:hdim-CC}
Let $\MH$ be a Cantor circle hyperbolic component containing a rational map $f_0$. Then
\begin{equation}
\inf_{f\in\MH}\dim_H (J(f))=\dim_C (J(f_0)) \text{\quad and\quad} \sup_{f\in\MH}\dim_H (J(f))=2.
\end{equation}
\end{thm}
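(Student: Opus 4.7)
The bounds $\inf_{f\in\MH}\dim_H(J(f))\geq\dim_C(J(f_0))$ and $\sup_{f\in\MH}\dim_H(J(f))\leq 2$ are the easy half. By holomorphic motions on hyperbolic components (M\~{a}n\'{e}--Sad--Sullivan), for every $f\in\MH$ the Julia set $J(f)$ is quasisymmetrically equivalent to $J(f_0)$; hence $\dim_C(J(f))=\dim_C(J(f_0))$, and the definition of conformal dimension gives the first inequality. For the second, every $f\in\MH$ is hyperbolic and $J(f)$ is a Cantor circle, so in particular $J(f)\neq\EC$; the classical theorem that a hyperbolic rational map with $J(f)\subsetneq\EC$ satisfies $\dim_H(J(f))<2$ closes this side.

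For the supremum direction, my plan is to produce a sequence $f_n\in\MH$ approaching $\partial\MH$ with $\dim_H(J(f_n))\to 2$. Since $J(f)$ is a Cantor circle, its complement consists of Fatou components separated by a nested family of round-like annuli whose moduli control the transverse thickness of the Cantor factor. As $f_n$ approaches a suitable boundary point of $\MH$ (for instance, a parabolic parameter where two Fatou components pinch), a sequence of complementary annular moduli tends to $0$. This can be combined with Shishikura's theorem that parabolic Julia sets of positive-area type may have Hausdorff dimension $2$, together with a quasi-conformal surgery that pushes such boundary parameters back into $\MH$ with only a small loss in dimension; alternatively, a direct two-sided estimate of the box dimension in terms of the shrinking moduli suffices.

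The more substantive direction is the inequality $\inf_{f\in\MH}\dim_H(J(f))\leq\dim_C(J(f_0))$. By Theorem~\ref{thm:quasi-unif}, $J(f_0)$ is quasisymmetrically equivalent to a standard Cantor circle $C_0$ generated, in the log coordinate, by an affine iterated function system whose contraction ratios are determined by the combinatorial data $(d_1,\ldots,d_n)$ of $\MH$ (Theorems~\ref{thm:conj}--\ref{thm:number-CC}). The conformal dimension of the quasisymmetric class equals $1+s^\ast$, where $s^\ast$ is the infimum, over admissible log-contractions $\alpha_1,\ldots,\alpha_n$ in the class, of the Moran exponent $s$ solving $\sum_i e^{-s\alpha_i}=1$. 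I would then construct explicit maps $f_k\in\MH$ whose complementary Fatou annuli have moduli approaching the near-optimal values $\alpha_i/2\pi$ and whose first-return dynamics on each annulus is power-like with degree $d_i$. By the Koebe distortion theorem, the expanding cocycle of $f_k$ on $J(f_k)$ becomes, at small scales, boundedly conformally conjugate to the optimal affine IFS; Bowen's formula $\dim_H(J(f_k))=s_k$, where $s_k$ is the zero of the topological pressure $s\mapsto P(-s\log|f_k'|)$, then yields $\dim_H(J(f_k))\to 1+s^\ast=\dim_C(J(f_0))$ by continuity of pressure under the approximation.

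The principal obstacle is the simultaneous combinatorial and geometric control in this last construction: one must verify that the approximating maps with arbitrarily large prescribed annular moduli remain in the same hyperbolic component $\MH$ (which we access through the combinatorial classification of Theorem~\ref{thm:conj}), and one must show that the iterated geometry of $J(f_k)$ converges, up to uniformly bounded distortion, to the target affine IFS so that Bowen's formula transfers dimension along the sequence. Once the distortion is under control, continuity of pressure is standard, and both halves of the theorem follow.
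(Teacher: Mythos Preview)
Your overall strategy is sound and tracks the paper's, but there are two places where the details are either wrong or substantially vaguer than what is actually needed.

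\textbf{Supremum.} Your invocation of Shishikura is slightly off. Shishikura's theorem does not say that the \emph{parabolic} Julia set has dimension $2$; it says that in any neighborhood of the parabolic map there are maps (with multipliers satisfying a continued-fraction condition) whose Julia sets have dimension arbitrarily close to $2$. These nearby maps are already \emph{in} $\MH$, so no ``pushing back with small loss'' is required. The genuine issue is to exhibit a suitable parabolic map on $\partial\MH$: the paper does this by running a pinching deformation (Cui--Tan) that degenerates an attracting fixed point in one of the simply connected periodic Fatou components to multiplier $1$, checks that the limit has a single petal (this uses that the closures of the periodic Fatou components are pairwise disjoint), and then applies Shishikura's theorem directly. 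Your ``shrinking moduli $\Rightarrow$ dimension $\to 2$'' alternative is not a proof as stated.

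\textbf{Infimum.} Your description of $\dim_C(J(f_0))$ as $1+s^\ast$ with $s^\ast$ an \emph{infimum over admissible log-contractions} is unnecessary and somewhat confused: by Pansu/Ha\"{i}ssinsky, the conformal dimension of (Cantor set)$\times S^1$ is $1$ plus the Ahlfors-regular dimension of the Cantor factor, which for the standard model is simply the unique $\alpha$ with $\sum_i d_i^{-\alpha}=1$. No optimization is needed. With this target value in hand, your plan (build maps in $\MH$ whose geometry approximates the affine IFS, then read off the dimension) is essentially the paper's, though the paper is much more concrete: it uses the explicit one-parameter family $f_\tau=f_{\varrho,d_1,\ldots,d_n}$, which is known to lie in $\MH$ by the topological-conjugacy classification (Corollary~\ref{cor:typical-ele}), passes to logarithmic coordinates so that the inverse branches form a genuine conformal IFS on a strip, and shows by direct calculation that the derivatives $|F_{\xi,\tau}'|$ converge \emph{uniformly} to the integers $d_i$ as $\tau\to 0$. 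The dimension is then squeezed between the Falconer upper bound and a matching lower bound (Theorem~\ref{th:Fal}, proved for conformal IFS satisfying the open set condition). Your appeal to Bowen's formula and continuity of pressure would also work, but you would still need the uniform derivative estimate, and you must justify that the open-set/expanding hypotheses hold along the whole sequence---which is exactly the distortion control you flag as the main obstacle.
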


In fact, we can show that the conformal dimension of $J(f_0)$ is $1+\alpha$, where $\alpha$ is the unique positive root of $\sum_{i=1}^n d_i^{-\alpha}=1$, and $(d_1,\cdots,d_n)$ is determined by the combinatorial information of the maps in the Cantor circle hyperbolic component $\MH$ (see Proposition \ref{prop:conf-dim-new}).
Moreover, we believe that $\sup_{f\in\MH}\dim_H (J(f))=2$ holds for any hyperbolic component in the space of rational maps $\textup{Rat}_d$ with any $d\geq 2$.

\medskip
Ha\"{i}ssinsky and Pilgrim constructed two quasisymmetrically inequivalent hyperbolic Cantor circle Julia sets from McMullen maps by studying their conformal dimensions \cite{HP12b}. For the study of the Hausdorff dimension of Cantor circle Julia sets (or their subsets) of McMullen maps, one may refer to \cite{WY14} and \cite[Theorem C(b)]{BW15}. For the possible range of the Hausdorff dimensions of Cantor circle Julia sets, we have the following result.

\begin{thm}\label{thm:Hdim-sharp}
The Hausdorff dimension of any Cantor circle Julia set lies in the open interval $(1,2)$. Moreover, for any given $1<s<2$, there exists a Cantor circle Julia set $J$ for which the Hausdorff dimension of $J$ is exactly $s$.
\end{thm}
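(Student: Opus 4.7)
The plan is to split the theorem into two parts: the range inclusion $\dim_H(J)\in(1,2)$ for every Cantor circle Julia set $J$, and the realization of every value in $(1,2)$.

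For the range inclusion, fix a hyperbolic rational map $f$ whose Julia set $J=J(f)$ is a Cantor circle. The upper bound $\dim_H(J)<2$ is Sullivan's classical estimate: $f$ is hyperbolic and $J\neq\EC$ (the Fatou set is non-empty since a Cantor circle does not cover the sphere), so the pressure/Poincar\'e series argument forces strict inequality. For the lower bound I would invoke the computation of the conformal dimension promised in Proposition \ref{prop:conf-dim-new}: $\dim_C(J)=1+\alpha$, where $\alpha>0$ is the unique positive root of $\sum_{i=1}^n d_i^{-\alpha}=1$ for the combinatorial data of the hyperbolic component $\MH\ni f$. Since the Hausdorff dimension of any metric space dominates its conformal dimension, $\dim_H(J)\geq 1+\alpha>1$.

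For the realization part, I would combine Theorem \ref{thm:hdim-CC} with the real-analytic (in particular continuous) dependence of $f\mapsto\dim_H(J(f))$ on a hyperbolic component, a classical result of Ruelle for hyperbolic rational maps. For a Cantor circle component $\MH$ with exponent $\alpha_\MH$, continuity on the connected set $\MH$ sends it to a connected subset of $\R$ with infimum $1+\alpha_\MH$ and supremum $2$ by Theorem \ref{thm:hdim-CC}; the image must therefore contain the whole open interval $(1+\alpha_\MH,2)$. To sweep out $(1,2)$ I vary $\MH$: for each integer $d\geq 3$ the signature $(d,d)$ satisfies $\sum_{i=1}^n d_i^{-1}=2/d<1$, so by Theorem \ref{thm:number-CC} there is a Cantor circle hyperbolic component $\MH_d\subset\MM_{2d}$ with this signature, for which $\alpha_{\MH_d}$ is the positive root of $2d^{-\alpha}=1$, i.e.\ $\alpha_{\MH_d}=\log 2/\log d\to 0$ as $d\to\infty$. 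Consequently
\[
\bigcup_{d\geq 3}\bigl(1+\tfrac{\log 2}{\log d},\,2\bigr)=(1,2),
\]
and every $s\in(1,2)$ is attained as $\dim_H(J(f))$ for some hyperbolic $f$ with Cantor circle Julia set.

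The main obstacle is the continuity of $\dim_H$ on a hyperbolic component, used to pass from the extremal values in Theorem \ref{thm:hdim-CC} to the intermediate ones; this is handled by citing Ruelle's real-analyticity theorem together with connectedness of $\MH$. A secondary point is ensuring that the signature $(d,d)$ actually corresponds to a non-empty Cantor circle hyperbolic component, which is immediate from the existence/classification underlying Theorems \ref{thm:conj} and \ref{thm:number-CC}. Everything else (the Sullivan upper bound, the $\dim_H\geq\dim_C$ inequality) is classical and causes no trouble.
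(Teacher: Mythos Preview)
Your argument for the realization part is essentially the paper's: both use the components with signature $(d,d)$, invoke Theorem \ref{thm:hdim-CC} for the infimum $1+\log 2/\log d$ and supremum $2$, apply Ruelle's continuity on the connected hyperbolic component to fill in the interval, and let $d\to\infty$.

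There is, however, a genuine gap in your range inclusion. The theorem is stated for \emph{any} Cantor circle Julia set, and as the paper remarks immediately after the statement, such Julia sets can arise from parabolic rational maps, not only hyperbolic ones. You begin by fixing a \emph{hyperbolic} $f$, so your argument simply does not address the parabolic case. Sullivan's estimate gives $\dim_H<2$ only for hyperbolic maps, and Proposition \ref{prop:conf-dim-new} (and the inequality $\dim_H\geq\dim_C$) is stated and proved only for maps in a Cantor circle hyperbolic component.

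The paper closes this gap as follows. First it quotes \cite{QYY15} to say that a rational map with Cantor circle Julia set is necessarily hyperbolic or parabolic. For the upper bound $\dim_H<2$ it then cites \cite{Urb94} or \cite{Yin00}, which cover the parabolic (geometrically finite) case as well. For the lower bound in the parabolic case it uses McMullen's continuity of Hausdorff dimension along horocycles \cite[Theorem 11.2]{McM00a}: one approximates the parabolic map by hyperbolic maps $f_n$ lying in a single Cantor circle hyperbolic component, so that $\dim_H(J(f))=\lim_n\dim_H(J(f_n))\geq\dim_C(J(f_n))>1$. Without these extra ingredients your proof of the first assertion is incomplete.
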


Note that a Cantor circle Julia set may contain a parabolic periodic point. Hence the rational maps considered in Theorem \ref{thm:Hdim-sharp} could be hyperbolic or parabolic.

\subsection{Organization of the paper and the sketch of the proofs}

In \S\ref{sec:quasi-unif}, we divide the rational maps with Cantor circle Julia sets into three types. Each type is based on the combinations of the Cantor circle rational maps. The combinatorial information allows us to define associated iterated function systems (IFS) whose attractors are the so-called standard Cantor circles. We establish the quasisymmetric uniformization by constructing quasiconformal homeomorphisms which map the hyperbolic Cantor circle Julia sets to the attractors of the associated IFS.

\medskip
Let $f$ and $g$ be two rational maps with Cantor circle Julia sets on which the dynamics are conjugate to each other. The idea of proving Theorem \ref{thm:conj} is to make the deformations in the critical annuli and obtain continuous paths $(f_t)_{t\in[0,1]}$, $(g_t)_{t\in[0,1]}$ of hyperbolic rational maps such that $f_0=f$, $g_0=g$ and $f_1=g_1$ (see Theorem \ref{thm:conj-resta}). In order to state the procedure more clearly, the deformations are made in the standard annuli, which lie in the dynamical plane of a quasi-regular map $\widetilde{F}$ whose restriction in some annuli is exactly the IFS associated to $f$ (and $g$). This section is the most important part of this paper. As an ingredient of the proof of Theorem \ref{thm:conj-resta}, a result about the homotopic classes from annuli to disks will be established in Appendix \ref{sec:appendix}.

\medskip
Based on Theorem \ref{thm:conj}, we can obtain the computational formula of the Cantor circle hyperbolic component by considering the different topological conjugate class of Cantor circle Julia sets and hence prove Theorem \ref{thm:number-CC}. This will be done in \S\ref{sec-No-CC}.

\medskip
Still by Theorem \ref{thm:conj}, we can find a specific rational map $f_{\varrho,d_1,\cdots,d_n}$ in each Cantor circle hyperbolic component (see Theorem \ref{thm-QYY} and Corollary \ref{cor:typical-ele}). For the infimum of the Hausdorff dimensions of the Cantor circle Julia sets, we study the specific $f_{\varrho,d_1,\cdots,d_n}$, decompose the dynamics of $f_{\varrho,d_1,\cdots,d_n}$ and obtain an iterated function system. By estimating the contracting factors of the inverse of $f_{\varrho,d_1,\cdots,d_n}$ in the log-plane, we prove the first part of Theorem \ref{thm:hdim-CC} by using a modified criterion on the calculation of Hausdorff dimensions (Theorem \ref{th:Fal}). This will be done in \S\ref{sec:dim-inf}.

\medskip
For the supremum of the Hausdorff dimensions of the Cantor circle Julia sets stated in Theorem \ref{thm:hdim-CC}, we will use a theorem on Hausdorff dimensions established by Shishikura (Theorem \ref{thm-shishi}). Then the second part of Theorem \ref{thm:Hdim-sharp} can be obtained by the continuous dependence of the Hausdorff dimension of hyperbolic rational maps. The proof of the rest part of Theorem \ref{thm:Hdim-sharp} will be given in \S\ref{sec:dim-sup}.

\medskip
\noindent\textbf{Notations.}
We will use the following notations throughout the paper. Let $\mathbb{C}$ be the complex plane and $\widehat{\mathbb{C}}=\mathbb{C}\cup\{\infty\}$ the Riemann sphere. Let $\mathbb{D}_r:=\mathbb{D}(0,r)$ be the disk centered at the origin with radius $r$ and $\mathbb{T}_r:=\partial\mathbb{D}_r$ the boundary of $\mathbb{D}_r$. In particular, $\mathbb{D}:=\mathbb{D}_1$ is the unit disk and $\mathbb{T}:=\mathbb{T}_1$ is the unit circle. For $0<r<R<+\infty$, let $\mathbb{A}(r,R):=\{z\in\mathbb{C}:r<|z|<R\}$ be the annulus centered at the origin. Moreover, we denote by $\A_r:=\A(r,1)$ with $0<r<1$.

\section{Quasisymmetric uniformalization}\label{sec:quasi-unif}

From the topological point of view, all Cantor circles are the same since they are all homeomorphic to the Cartesian product of the middle third Cantor set and the unit circle. In this section we study the Cantor circle Julia sets of hyperbolic rational maps in the sense of quasisymmetric equivalence. This will give all hyperbolic Cantor circle Julia sets a more rich geometric classification.

\subsection{Combinations of Cantor circle rational maps}\label{subsec-comb}

In this subsection we give a sketch of all the possible combinations of the rational maps whose Julia sets are Cantor circles.
Let $f$ be a hyperbolic rational map of degree $d\geq 2$ whose Julia set is a Cantor set of circles. Note that the complement of any Cantor circle Julia set (i.e., the Fatou set) consists of two simply connected components and countably many doubly connected components. In the following, we always make the following

\medskip
\textit{Assumption}:  \textit{$f$ is chosen in the moduli space of rational maps such that the two simply connected Fatou components of $f$, denoted by $D_0$ and $D_\infty$, contain $0$ and $\infty$ respectively}.

\medskip
Note that all the doubly connected Fatou components of $f$ are iterated to $D_0$ or $D_\infty$ eventually.
For $n\geq 2$, let $D_1$, $\cdots$, $D_{n-1}$ be the annular components such that $f^{-1}(D_0\cup D_\infty)=D_0\cup D_\infty\cup \bigcup_{i=1}^{n-1}D_i$, where $\{D_i\}_{1\leq i\leq n-1}$ are labeled such that $D_i$ separates $D_{i'}$ from $D_{i''}$ for all $0\leq i'<i<i''\leq n-1$. The annuli $\{D_i:1\leq i\leq n-1\}$ are called \textit{critical annuli} and $\{D_i:i=0,1,\cdots,n-1,\infty\}$ are called \textit{critical Fatou components}. Let $A_i$ be the annulus (which is a closed set) between $D_{i-1}$ and $D_i$, where $1\leq i\leq n-1$ and $A_n$ the annulus between $D_{n-1}$ and $D_\infty$. Then $f^{-1}(A)=\bigcup_{i=1}^n A_i$, where $A=\EC\setminus(D_0\cup D_\infty)$. See Figure \ref{Fig:annulus}.

\begin{figure}[!htpb]
  \setlength{\unitlength}{1mm}
  \centering
  \includegraphics[width=0.95\textwidth]{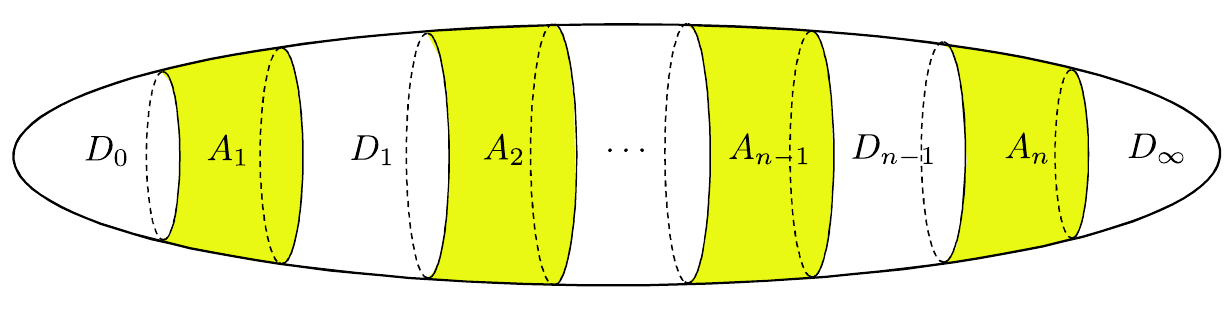}
  \caption{The structure of the Cantor circle Julia sets on the Riemann sphere. All the critical Fatou components $\{D_i:\,0\leq i\leq n-1 \text{ or } i=\infty\}$ have been marked and all the non-critical annuli $\{A_i:1\leq i\leq n\}$ have been colored by yellow.}
  \label{Fig:annulus}
\end{figure}

Note that $f|_{A_i}:A_i\to A$ is a covering map and we suppose that $\deg(f|_{A_i}:A_i\to A)=d_i$, where $1\leq i\leq n$. Then $\deg(f|_{D_i}:D_i\to D_0 \text{~or~} D_\infty)=d_i+d_{i+1}$, where $1\leq i\leq n-1$. Moreover, $\deg(f|_{D_0})=d_1$ and $\deg(f|_{D_\infty})=d_n$. Up to the conjugacy of a M\"{o}bius transformation, every rational map with Cantor circle Julia set belongs to one of the following three types.

\medskip
\textbf{Type I}: $f(D_0)=D_\infty$, $f(D_\infty)=D_\infty$ and $n\geq 2$ is even. Moreover,
\begin{equation}
f^{-1}(D_0)=\bigcup_{i=1}^{n/2} D_{2i-1} \text{ and } f^{-1}(D_\infty)=D_0\cup D_\infty\cup \bigcup_{i=1}^{(n-2)/2} D_{2i}.
\end{equation}

\vskip0.1cm
\textbf{Type II}: $f(D_0)=D_0$, $f(D_\infty)=D_\infty$ and $n\geq 3$ is odd. Moreover,
\begin{equation}
f^{-1}(D_0)=D_0\cup \bigcup_{i=1}^{(n-1)/2} D_{2i} \text{ and } f^{-1}(D_\infty)=D_\infty\cup\bigcup_{i=1}^{(n-1)/2} D_{2i-1}.
\end{equation}

\vskip0.1cm
\textbf{Type III}: $f(D_0)=D_\infty$, $f(D_\infty)=D_0$ and $n\geq 3$ is odd. Moreover,
\begin{equation}
f^{-1}(D_0)=D_\infty\cup\bigcup_{i=1}^{(n-1)/2} D_{2i-1} \text{ and } f^{-1}(D_\infty)=D_0\cup \bigcup_{i=1}^{(n-1)/2} D_{2i}.
\end{equation}

Note that $f^{-1}(A)=\bigcup_{i=1}^n A_i$ and each $A_i$ is essentially contained in $A$. It follows from Gr\"{o}tzsch's module inequality that
\begin{equation}\label{equ:combi-data}
\sum_{i=1}^n d_i=d \quad\text{and}\quad \sum_{i=1}^n\frac{1}{d_i}<1.
\end{equation}

\begin{defi}[{Combinations of Cantor circles}]
Let $\mathscr{C}$ be the collection of all the \textit{combinations} with the form $\MC=(\kappa;d_1,\cdots, d_n)$, where $\kappa\in \{\Io,\It,\Is\}$ is the \textit{type}, the array of positive integers $(d_1,\cdots, d_n)$ satisfies \eqref{equ:combi-data}, and
\begin{equation}
n\geq 2 \text{ is }\begin{cases}
\text{ even }  & \text{if } \kappa=\Io, \\
\text{ odd }   & \text{if } \kappa=\It \text{ or } \Is.
\end{cases}
\end{equation}
\end{defi}

For a hyperbolic rational map $f$ with Cantor circle Julia set, there exists at least one combinatorial data $\MC(f)=(\kappa;d_1,\cdots, d_n)\in\mathscr{C}$ corresponding to $f$.

\begin{lem}\label{lema:diff-comb}
Let $f$ be a hyperbolic rational map whose Julia set is a Cantor set of circles. Then $\MC(f)$ has exactly one element if and only if $f$ is of
\begin{itemize}
\item type $\Io$; or
\item type $\It$ or $\Is$ with $(d_1,\cdots,d_n)=(d_n,\cdots, d_1)$.
\end{itemize}
\end{lem}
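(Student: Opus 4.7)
The plan is to trace where the multiplicity in $\MC(f)$ can come from. Since $J(f)$ is a Cantor circle, the Fatou set has exactly two simply connected components, say $U_1$ and $U_2$. To extract a combinatorial datum in $\mathscr{C}$, one must first declare one of them as $D_0$ (the component containing $0$) and the other as $D_\infty$ (containing $\infty$); since we work in $\MM_d$, either assignment is available through a Möbius conjugacy. Once such a choice is made, the intermediate annular Fatou components $D_1,\dots,D_{n-1}$ are forced by the nesting condition (each $D_i$ separates $D_{i'}$ from $D_{i''}$ for $0\leq i'<i<i''\leq n-1$), and the degrees $d_i=\deg(f|_{A_i}\colon A_i\to A)$ are intrinsic to $f$. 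Hence the only source of multiplicity is the swap $D_0\leftrightarrow D_\infty$, and my task is to determine, for each type, whether that swap produces an element of $\mathscr{C}$ and, if so, whether it is the same element.

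Next I would work out the effect of the swap. Relabeling $D_0\leftrightarrow D_\infty$ reverses the nesting order of the $D_i$'s and the $A_i$'s, so the new degree sequence is $(d_n,d_{n-1},\dots,d_1)$. Its effect on type is read off from the three definitions in \S\ref{subsec-comb}:
\begin{itemize}
\item[(I)] Here $f(D_\infty)=D_\infty$ and $f(D_0)=D_\infty$, i.e.\ one simply connected component is fixed and the other is strictly pre-fixed. After the swap, the new $D_0$ is fixed and the new $D_\infty$ maps to the new $D_0$, which matches none of the three template configurations in $\mathscr{C}$; hence the swapped labeling gives no valid combination.
\item[(II)] Both $D_0$ and $D_\infty$ are fixed, so swapping preserves the type-II template.
\item[(III)] The two components are exchanged by $f$, and this property is preserved by swapping, so the type-III template persists.
\end{itemize}
Thus for type I exactly one labeling yields an element of $\mathscr{C}$, while for types II and III both labelings do, giving the two (possibly equal) combinations $(\kappa;d_1,\dots,d_n)$ and $(\kappa;d_n,\dots,d_1)$.

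Combining these observations proves the lemma in both directions. In the type-I case, $\MC(f)$ is automatically a singleton. In the type-II or type-III case, $\MC(f)$ consists of the (unordered) pair $\{(\kappa;d_1,\dots,d_n),(\kappa;d_n,\dots,d_1)\}$, which has cardinality one precisely when $(d_1,\dots,d_n)=(d_n,\dots,d_1)$. Conversely, if $f$ is of type II or III with a non-palindromic degree sequence, $\MC(f)$ has two distinct elements, so singleton-ness forces exactly the listed alternatives. The one step requiring care is the dynamical check for type I, namely that a pre-fixed simply connected component cannot be relabeled into any of the templates in $\mathscr{C}$; this should be argued directly from the images of $D_0$ and $D_\infty$ under $f$, rather than merely by symmetry. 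Everything else is a bookkeeping consequence of the nesting structure described in Figure \ref{Fig:annulus} and the degree constraints \eqref{equ:combi-data}.
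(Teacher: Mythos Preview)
Your argument is correct and follows essentially the same approach as the paper: the only freedom in extracting a combination is the choice of which simply connected Fatou component is labeled $D_0$, and the swap is realized by conjugating with $z\mapsto 1/z$ (the paper writes this as $1/f(1/z)$). Your treatment is in fact more complete than the paper's terse proof, which only spells out that for $\kappa\in\{\It,\Is\}$ with a non-palindromic sequence one obtains two distinct combinations; you additionally verify explicitly that the swapped labeling in type~$\Io$ yields $f(D_0)=D_0$, $f(D_\infty)=D_0$, which is not among the three templates in $\mathscr{C}$ (this is the ``case (d)'' the paper later discards as redundant).
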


\begin{proof}
Note that if $f$ has combination $(\kappa;d_1,\cdots, d_n)$ with $\kappa\in\{\It,\Is\}$, then $1/f(1/z)$ has combination $(\kappa;d_n,\cdots, d_1)$. If further $(d_1,\cdots,d_n)\neq(d_n,\cdots, d_1)$, then $\MC(f)$ consists of exactly two elements $(\kappa;d_1,\cdots, d_n)$ and $(\kappa;d_n,\cdots, d_1)$.
\end{proof}

\begin{rmk}
Actually, all the classifications and definitions in this subsection are valid for \textit{parabolic} Cantor circle Julia sets, i.e., at least one of $D_0$ and $D_\infty$ is a parabolic periodic Fatou component. However, any parabolic Cantor circle Julia set is never quasisymmetrically equivalent to the standard Cantor circles (see the definitions in the next subsection) since parabolic Cantor circle Julia sets always contain some Julia components with cusps. See \cite{QYY16}.
\end{rmk}

\subsection{Standard Cantor circles and quasisymmetric uniformalization}\label{subsec-quasisym}

We first recall the definition of iterated function systems.
Let $\Omega$ be a closed subset of $\mathbb{R}^n$ $(n\geq 1)$. The map $\psi : \Omega\rightarrow\Omega$ is called a \textit{contracting map} on $\Omega$, if there is a real number $0<c<1$ such that $|\psi (x)-\psi (y)|\leq c |x-y|$, $\forall x,y\in \Omega$. A finite family $\mathscr{F}=\lbrace \psi_1, \ldots, \psi_m \rbrace$, where $m\geq 2$, defined on $\Omega$, is called an \textit{iterated function system} (\textit{IFS} in short), if $\psi_i$ is a contracting map for all $1\leq i\leq m$. A non-empty set $J\subset\Omega$ is an \textit{attractor} of $\mathscr{F}$, if $J=\bigcup_{i=1}^m \psi_i(J)$. For any IFS, the attractor exists and is unique  (see \cite[Chap.\,9]{Fal14}).

\medskip
For each given $\MC=(\kappa;d_1,\cdots, d_n)\in\mathscr{C}$, we will define a \textit{modified} iterated function system associated to $\MC$. Let
\begin{equation}\label{equ:inteval-parti}
-1=b_1^-<b_1^+<b_2^-<b_2^+<\cdots<b_n^-<b_n^+=0
\end{equation}
be a partition of the unit interval $I=[-1,0]$, where $b_i^+-b_i^-=\frac{1}{d_i}$ for all $1\leq i\leq n$ (This is always possible since $\sum_{i=1}^n \frac{1}{d_i}<1$). For $1\leq i\leq n$, we define
\begin{equation}
L_i^\pm(x):=\pm\, d_i(x-b_i^\pm), \text{\quad where } x\in[b_i^-,b_i^+].
\end{equation}
We denote a symbol function $\chi(\pm 1):=\pm$ and define
\begin{equation}
\widetilde{\mathscr{F}}(\MC):=
\begin{cases}
\big\{\big(L_i^{\chi((-1)^i)})^{-1}:1\leq i\leq n\big\}  & \text{if } \kappa=\Io \text{ or } \Is, \\
\big\{\big(L_i^{\chi((-1)^{i-1})})^{-1}:1\leq i\leq n\big\}  & \text{if } \kappa=\It.
\end{cases}
\end{equation}
Then it is easy to see that $\widetilde{\mathscr{F}}(\MC)$ is an IFS defined on $[-1,0]$ and the attractor of $\widetilde{\mathscr{F}}(\MC)$ is a Cantor set $A(\MC)\subset [-1,0]$ having strict self-similarity.

\begin{defi}[{Standard Cantor circles}]
Let $J(\MC):=\{z\in\C:\log z\in A(\MC)\times \R\}$ be the \textit{standard Cantor circle} associated to the combination $\MC$. Then $J(\MC)$ is contained in the closed annulus $\overline{\A}(\frac{1}{e},1)$. For $1\leq i\leq n$, we define
\begin{equation}
\varphi_i^\pm(z):= z^{\pm\, d_i}/e^{\pm\, b_i^\pm d_i}:\overline{\A}(e^{b_i^-},e^{b_i^+})\to\overline{\A}(1/e,1)
\end{equation}
and
\begin{equation}
\mathscr{L}(\MC):=
\begin{cases}
\{\varphi_i^{\chi((-1)^i)}:1\leq i\leq n\}  & \text{if } \kappa=\Io \text{ or } \Is, \\
\{\varphi_i^{\chi((-1)^{i-1})}:1\leq i\leq n\}  & \text{if } \kappa=\It.
\end{cases}
\end{equation}
Note that the inverse of $\mathscr{L}(\MC)$ consists of $d=\sum_{i=1}^n d_i$ contracting maps, which form an IFS on $\overline{\A}(1/e,1)$.
By a coordinate transformation, it is straightforward to verify that $J(\MC)$ is exactly the attractor of the inverse of $\mathscr{L}(\MC)$. For convenience, we call $\mathscr{L}(\MC)$ the \textit{modified IFS}\footnote{Sometimes we omit the word ``modified" for simplicity.} associated to the combination $\MC$ and $J(\MC)$ the \textit{attractor} of $\mathscr{L}(\MC)$.
See\footnote{For the standard Cantor circle $J(\It; 4,4,4)$ in Figure \ref{Fig:Cantor-circle-scaled}, we use the partition $(-1$, $-3/4$, $-5/8$, $-3/8$, $-1/4$, $0)$ of $[-1,0]$. We will see later that each quasisymmetrically equivalent class of the standard Cantor circles depends only on the combination but not on the specific choice of the partitions. See Corollary \ref{cor:stand-CC}.} Figure \ref{Fig:Cantor-circle-scaled}.
\end{defi}

\begin{figure}[!htpb]
  \setlength{\unitlength}{1mm}
  \centering
  \includegraphics[width=0.47\textwidth]{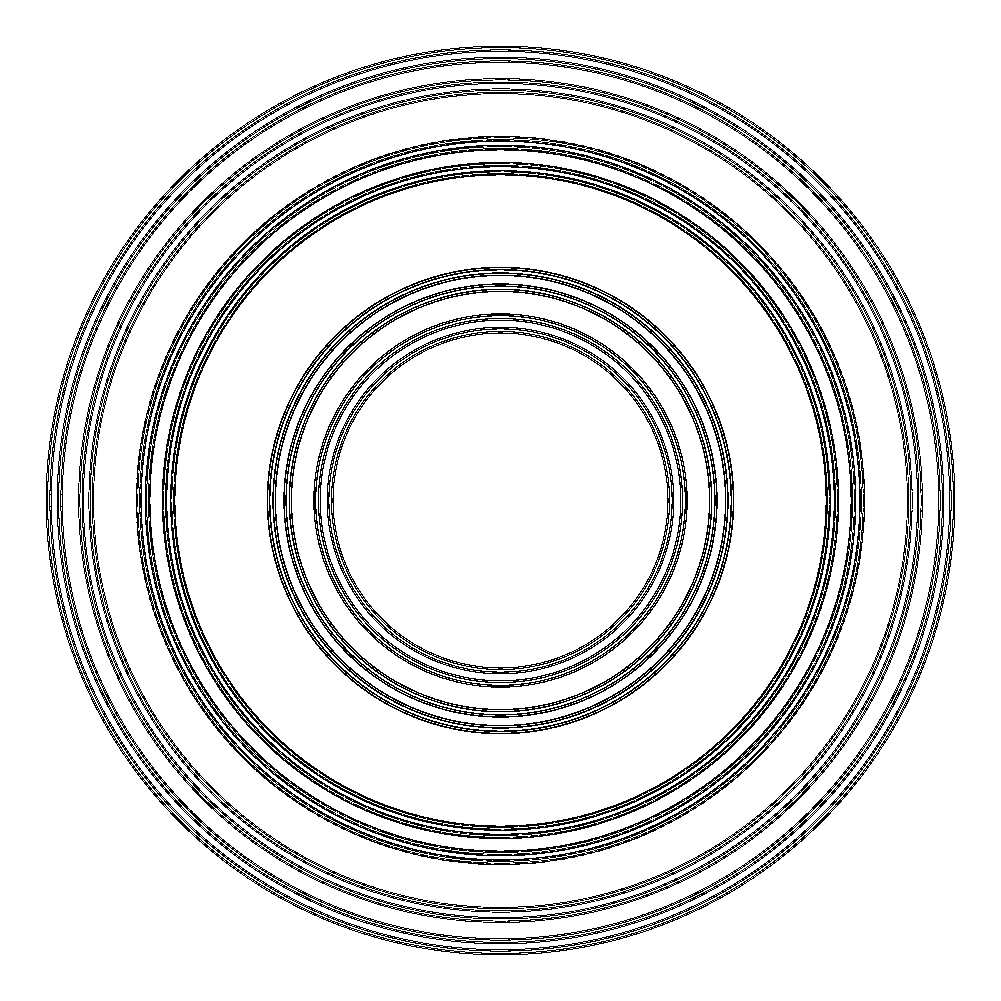}\quad
  \includegraphics[width=0.47\textwidth]{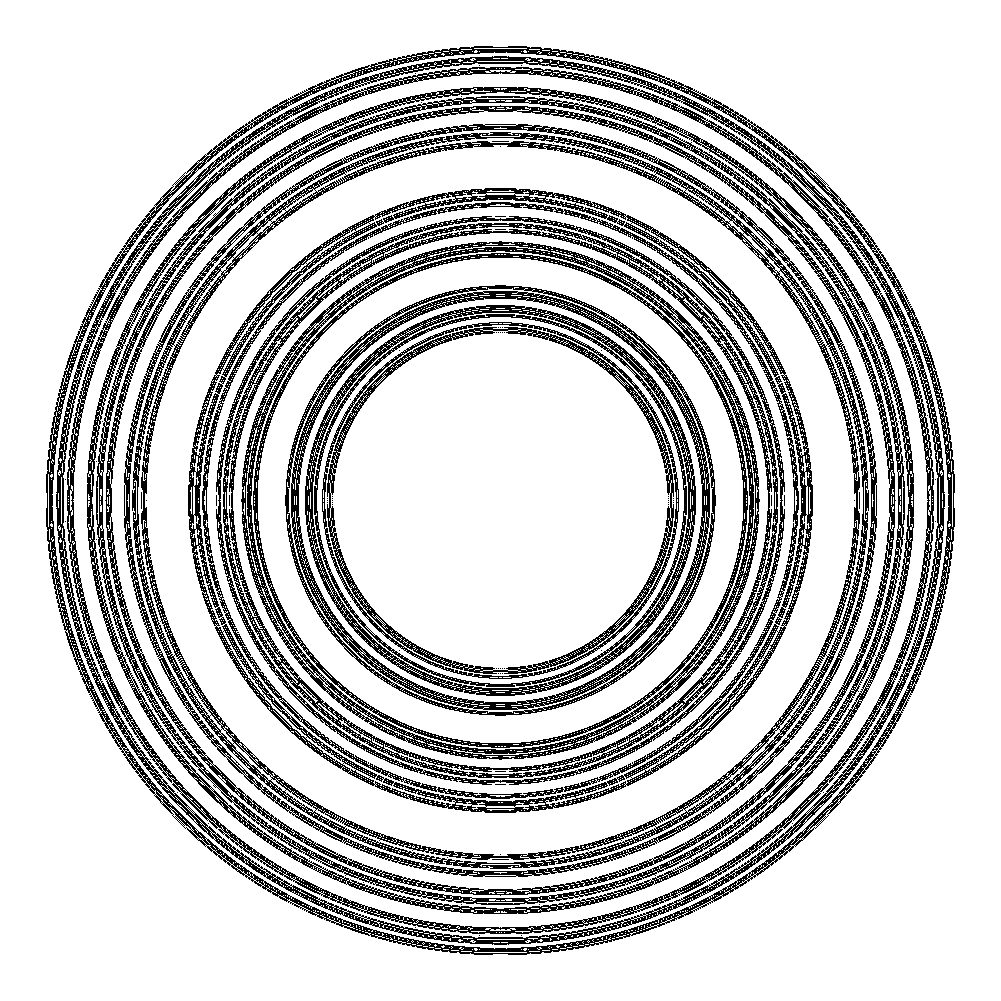}
  \caption{Two standard Cantor circles $J(\Io;3,3)$ and $J(\It; 4,4,4)$, which are generated by two modified IFS $\{z^{-3}/e^3, z^3\}$ and $\{e^3 z^4, z^{-4}/e^{5/2}, z^4\}$ respectively.}
  \label{Fig:Cantor-circle-scaled}
\end{figure}

Let $d_1$, $\cdots$, $d_n\geq 2$ be positive integers satisfying \eqref{equ:combi-data}. We use $\alpha=\alpha_{d_1,\cdots,d_n}\in(0,1)$ to denote the unique positive root of
\begin{equation}\label{equ:root}
\sum_{i=1}^n \Big(\frac{1}{d_i}\Big)^\alpha=1.
\end{equation}
According to \cite[\S7.1 and Theorem 9.3]{Fal14}, we have the following immediate result.

\begin{lem}\label{lema:Hdim}
A standard Cantor circle $J(\MC)$ with $\MC=(\kappa;d_1,\cdots,d_n)\in\mathscr{C}$ has Hausdorff dimension $1+\alpha_{d_1,\cdots,d_n}$.
\end{lem}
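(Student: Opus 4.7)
The plan is to reduce the computation of $\dim_H J(\MC)$ to two well-known facts in fractal geometry applied to the product structure of $J(\MC)$: a self-similarity dimension formula for the Cantor factor $A(\MC)$, and a product dimension formula for the circle factor.

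First I would verify that the IFS $\widetilde{\mathscr{F}}(\MC) = \{(L_i^{\chi(\cdot)})^{-1}\}$ satisfies the open set condition. Each map $(L_i^\pm)^{-1}$ is an affine contraction of $[-1,0]$ with ratio exactly $1/d_i$ and image $[b_i^-,b_i^+]$. The strict inequalities in the partition \eqref{equ:inteval-parti} guarantee that these images are pairwise disjoint closed subintervals of $[-1,0]$, so the open interval $(-1,0)$ serves as an invariant open set. By the standard similarity dimension theorem (Theorem 9.3 in \cite{Fal14}), $A(\MC)$ has Hausdorff dimension equal to the unique $\alpha>0$ with $\sum_{i=1}^n (1/d_i)^\alpha = 1$; that is, $\dim_H A(\MC) = \alpha_{d_1,\cdots,d_n}$.

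Next I would pass from $A(\MC)$ to $J(\MC)$. By definition, $J(\MC) = \{z\in\C : \log|z|\in A(\MC)\}\subset \overline{\A}(1/e,1)$, and the polar parametrization
\[
\Phi:A(\MC)\times \R/2\pi\Z \to J(\MC),\qquad \Phi(r,\theta)=e^{r+\ii\theta},
\]
is bi-Lipschitz, because $r\in[-1,0]$ forces $|\Phi'|$ to be bounded above and below on the compact strip. Hence $J(\MC)$ is bi-Lipschitz homeomorphic to the Cartesian product $A(\MC)\times \T$, and Hausdorff dimension is preserved under bi-Lipschitz maps.

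Finally I would invoke the product formula for Hausdorff dimension from \cite[\S7.1]{Fal14}: whenever $F$ satisfies $\overline{\dim}_B F = \dim_H F$, one has $\dim_H(E\times F) = \dim_H E + \dim_H F$. Taking $F=\T$, whose Hausdorff and upper box dimensions both equal $1$ (being a smooth $1$-manifold), yields
\[
\dim_H J(\MC) = \dim_H A(\MC) + \dim_H \T = \alpha_{d_1,\cdots,d_n} + 1,
\]
which is the desired identity. The only mild obstacle is the product formula, since in general one only has the inequality $\dim_H(E\times F)\geq \dim_H E+\dim_H F$; here, however, the smoothness of the circle factor supplies the coincidence of box and Hausdorff dimensions needed for equality, so nothing further is required.
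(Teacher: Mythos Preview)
Your argument is correct and follows exactly the route the paper indicates: it simply cites \S7.1 (the product formula) and Theorem~9.3 (the self-similar dimension formula) of \cite{Fal14}, and you have spelled out precisely those two ingredients together with the bi-Lipschitz identification $J(\MC)\cong A(\MC)\times\T$ via the exponential map. There is nothing to add.
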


\begin{defi}[{Quasiregular mappings, \cite[Chap.\,1.6]{BF14}}]
Let $U$ be an open subset in $\EC$ and $1\leq K<\infty$. A continuous mapping $g:U\to\C$ is \textit{$K$-quasiregular} if and only if $g$ can be written as
\begin{equation}
g=h\circ \phi,
\end{equation}
where $\phi:U\to\phi(U)$ is $K$-quasiconformal and $h:\phi(U)\to g(U)$ is holomorphic. Equivalently, $g$ is $K$-quasiregular if and only if  $g$ is locally $K$-quasiconformal, except at a discrete set of points in $U$. The map $g$ is called \textit{$C^1$-quasiregular} if it is $K$-quasiregular for some $K\geq 1$ and also $C^1$-continuous in $U$.
\end{defi}

Now we give the quasisymmetric uniformization of the Cantor circle Julia sets of hyperbolic rational maps.

\begin{thm}\label{thm:qs-classifi}
Every Cantor circle Julia set of hyperbolic rational map is quasisymmetrically equivalent to a standard Cantor circle.
\end{thm}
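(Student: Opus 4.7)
The plan is to construct a $K$-quasiconformal map $\Phi:\EC\to\EC$ that semi-conjugates $f$ (on a neighborhood of $J(f)$) to the model quasiregular ``unwrapping'' map $F$ determined by $\mathscr{L}(\MC)$, where $\MC=\MC(f)$ is the combination extracted from $f$ in \S\ref{subsec-comb}. Then $\Phi(J(f))=J(\MC)$, and since any quasiconformal homeomorphism of $\EC$ is quasisymmetric on compact subsets, the restriction $\Phi|_{J(f)}$ furnishes the desired quasisymmetric uniformization.

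\textbf{Step 1 (Initial equivariant model).} Build a $K_0$-quasiconformal map
\[
\Phi_0:\overline{A}=\EC\setminus(D_0\cup D_\infty)\longrightarrow \overline{\A}(1/e,1)
\]
that sends each non-critical annulus $A_i$ onto the $i$-th image piece of $\mathscr{L}(\MC)^{-1}$, and that matches the boundary data so that $F\circ\Phi_0=\Phi_0\circ f$ on $\partial\overline{A}=\partial D_0\cup\partial D_\infty\cup\bigcup_{i=1}^{n-1}\partial D_i$. On each of these $n+1$ circles the map $f$ is a covering of degree $d_1$, $d_n$, or $d_i+d_{i+1}$, and the corresponding $\varphi_i^{\pm}$ of $\mathscr{L}(\MC)$ realise exactly the same degrees; the type $\kappa\in\{\Io,\It,\Is\}$ was defined precisely so that the orientation pattern of consecutive $A_i,A_{i+1}$ is consistent with the alternating signs $\chi((-1)^i)$ in $\mathscr{L}(\MC)$. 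Working in log-coordinates on each annular piece (where $\varphi_i^\pm$ becomes affine) and using a standard finite-geometry interpolation yields such $\Phi_0$ with prescribed boundary equivariance.

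\textbf{Step 2 (Pullback and limit).} Since $f:\bigcup_{i=1}^n A_i\to A$ and $F:\bigcup_{i=1}^n(\textrm{model piece})_i\to\overline{\A}(1/e,1)$ are unbranched coverings of matching degrees $d_i$, lift $\Phi_0$ to $\Phi_1$ on $\bigcup A_i\subset\overline{A}$ by the dynamical relation $F\circ\Phi_1=\Phi_0\circ f$. The boundary equivariance in Step 1 ensures $\Phi_1$ agrees with $\Phi_0$ on $\partial(\bigcup A_i)\cap\partial A$, so pasting gives a $K_0$-quasiconformal map on $\overline{A}\cup f^{-1}(\overline{A})$. Iterate to obtain $\Phi_k$ on $f^{-k}(\overline{A})$; every pullback is a composition of a conformal branch of $F^{-1}$ with $\Phi_{k-1}$ and with $f$, so the dilatation $K_0$ is preserved. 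By hyperbolicity and the Koebe distortion theorem the pieces of $f^{-k}(\overline{A})$ shrink uniformly around $J(f)$, so $\{\Phi_k\}$ forms a normal family. Extract a limit $\Phi$ on $\EC\setminus\bigcup_{k\geq 0}f^{-k}(D_0\cup D_\infty)$ and extend it $K_0$-quasiconformally across each (super)attracting Fatou component by straightening its Böttcher/linearising coordinate to that of the corresponding attracting fixed point of the IFS. This produces a global $K_0$-quasiconformal homeomorphism $\Phi:\EC\to\EC$ with $\Phi(J(f))=J(\MC)$, which is quasisymmetric when restricted to $J(f)$.

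\textbf{Main obstacle.} The delicate step is Step~1: producing $\Phi_0$ with simultaneous boundary equivariance on all $n+1$ circles. The issue is that the dynamical behaviour of $f$ on $\partial D_i$ depends on both adjacent annuli $A_i$ and $A_{i+1}$ (their degrees and relative orientation), so the equivariance conditions on neighbouring circles are coupled. The three types $\Io,\It,\Is$ and the alternating sign pattern in $\mathscr{L}(\MC)$ are precisely the combinatorial data needed for consistency; verifying this amounts to a case-by-case check that, for each type, the boundary covering of $F$ on the model side agrees (up to orientation-preserving homeomorphism of the circle) with that of $f$. A secondary technical point is the regularity of the gluing along $\partial D_i$ between $\Phi_k$ and $\Phi_{k+1}$, which can be guaranteed by first replacing $D_0,D_\infty$ with slightly shrunken quasi-disks bounded by real-analytic curves before starting the construction.
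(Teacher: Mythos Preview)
Your overall plan---extend $\mathscr{L}(\MC)$ to a quasiregular model $F$, build an initial quasiconformal semi-conjugacy, lift iteratively, and pass to a normal-family limit---is exactly the paper's strategy, and it works. Two points deserve correction, however.

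Your ``Main obstacle'' is self-inflicted. You demand in Step~1 that $\Phi_0$ satisfy $F\circ\Phi_0=\Phi_0\circ f$ on \emph{all} the boundary circles of the $A_i$ simultaneously, which indeed creates coupled constraints. The paper avoids this entirely: the initial map $\phi_0$ need only be equivariant on the two outer circles $\partial D_0\cup\partial D_\infty$. The lifts $\phi_1|_{A_i}$ (via the coverings $f:A_i\to A$ and $F:A_i'\to A'$) are then chosen to agree with $\phi_0$ on $\partial D_0\cup\partial D_\infty$, and one \emph{freely} extends $\phi_1$ across each critical annulus $D_i$ by any QC map homotopic to $\phi_0|_A$ rel $\partial A$. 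The values on the inner circles are whatever the lifts dictate; there is no case-by-case compatibility to verify. (Incidentally, $\partial\overline{A}=\partial D_0\cup\partial D_\infty$ only, and there are $2n$ boundary circles in total, not $n+1$; the degree $d_i+d_{i+1}$ is that of $f|_{D_i}$, while on each component of $\partial D_i$ the degree is $d_i$ or $d_{i+1}$.)

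Your Step~2 bookkeeping is also muddled. Each $\phi_k$ should be a QC map of the whole sphere (equal to $\phi_{k-1}$ on $f^{-(k-1)}(D_0\cup D_\infty)$, and a lift of $\phi_{k-1}$ elsewhere), not a map on a shrinking set $f^{-k}(\overline{A})$. The normal-family property then follows from the uniform bound $K(\phi_k)=K(\phi_1)$, not from Koebe distortion or shrinking of annuli; and any subsequential limit $\phi_\infty$ is automatically a global quasiconformal map conjugating $f$ to $F$ on $J(f)$---no separate extension across Fatou components via B\"ottcher coordinates is needed.
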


\begin{proof}
Let $f$ be a hyperbolic rational map whose Julia set $J(f)$ is a Cantor circle with combinatorial data\footnote{If $\MC=\MC(f)$ consists of two elements we choose and fix any one of them (see Lemma \ref{lema:diff-comb}).} $\MC=(\kappa;d_1,\cdots, d_n)\in\mathscr{C}$. In the following we prove that $J(f)$ is quasisymmetrically equivalent to the attractor $J(\MC)$ of the modified IFS $\mathscr{L}(\MC)$. The idea is to extend the IFS $\mathscr{L}(\MC)$ to a quasiregular map $F$ and then prove that $f: J(f)\to J(f)$ is conjugated to $F:J(\MC)\to J(\MC)$ by the restriction of a quasiconformal mapping. For convenience we only prove the case $\kappa=\Io$. The cases for $\kappa=\It$, $\Is$ are completely similar.

\medskip
\textbf{Step 1}: \textit{Extending $\mathscr{L}(\MC)$ to a quasiregular map $F$}. Since $\kappa=\Io$, it means that $n\geq 2$ is even and we have
\begin{equation}\label{equ:L-MC}
\begin{split}
\mathscr{L}(\MC)
=&~\{\varphi_1^-,\varphi_2^+,\cdots,\varphi_{n-1}^-,\varphi_n^+\}\\
=&~\{z^{-d_1}/e^{d_1},z^{d_2}/e^{b_2^+ d_2},\cdots, z^{-d_{n-1}}/e^{- b_{n-1}^- d_{n-1}}, z^{d_n}\}.
\end{split}
\end{equation}
The elements in $\mathscr{L}(\MC)$ are defined by
\begin{equation}
\varphi_i^{\chi((-1)^i)}:\overline{\A}(e^{b_i^-},e^{b_i^+})\to\overline{\A}(1/e,1), \text{ where }1\leq i\leq n.
\end{equation}
Let $F:=\varphi_i^{\chi((-1)^i)}$ on $\overline{\A}(e^{b_i^-},e^{b_i^+})$, where $1\leq i\leq n$. We extend $F$ by setting
\begin{equation}\label{equ:F-qr}
F(z):=
\begin{cases}
\varphi_1^-(z)=z^{-d_1}/e^{d_1}  & \text{if } z\in\D(0,1/e), \\
\varphi_n^+(z)=z^{d_n}  & \text{if } z\in\EC\setminus\overline{\D}, \\
\text{$C^1$-quasiregular interpolation}  & \text{if } z\in\A(e^{b_i^+},e^{b_{i+1}^-}),
\end{cases}
\end{equation}
where $1\leq i\leq n-1$.
Moreover, the interpolations are chosen such that $F(\A(e^{b_i^+}$, $e^{b_{i+1}^-}))=\D(0,1/e)$ if $i$ is odd and $F(\A(e^{b_i^+},e^{b_{i+1}^-}))=\EC\setminus\overline{\D}$ if $i$ is even. Such interpolations exist indeed\footnote{Actually, McMullen maps provide a model of such kind of interpolations (from a critical annulus to a disk) when the corresponding Julia set is a Cantor circle. See \cite[\S 3]{DLU05}.}, see \cite[Lemma 7.47]{BF14} or \cite[Lemma 2.1]{PT99}. Then it is straightforward to see that $F:\EC\to\EC$ is a $C^1$-quasiregular mapping of degree $d=\sum_{i=1}^n d_i$.

Similar to the notations used in \S\ref{subsec-comb} (see also Figure \ref{Fig:annulus}), we denote $D_0':=\D(0,1/e)$, $D_\infty':=\EC\setminus\overline{\D}$, $A_i':=\overline{\A}(e^{b_i^-},e^{b_i^+})$ with $1\leq i\leq n$, and $D_i':=\A(e^{b_i^+},e^{b_{i+1}^-})$ with $1\leq i\leq n-1$. Then we have
$F(D_0')=D_\infty'$, $F(D_\infty')=D_\infty'$ and
\begin{equation}
F^{-1}(D_0')=\bigcup_{i=1}^{n/2} D_{2i-1}' \text{ and } F^{-1}(D_\infty')=D_0'\cup D_\infty'\cup \bigcup_{i=1}^{(n-2)/2} D_{2i}'.
\end{equation}

\textbf{Step 2}: \textit{Construction of a sequence of quasiconformal mappings}. Since $f$ is hyperbolic, it is known that the Julia components of $f$ are all quasicircles (see \cite[Corollary 1.7]{QYY16}). In particular, the boundaries $\partial D_0$, $\partial D_\infty$ and all the connected components of $f^{-1}(\partial D_0\cup\partial D_\infty)$ are quasi-circles. There exists a quasiconformal mapping $\phi_0:\EC\rightarrow\EC$ satisfying $\phi_0(D_0)=D_0'$ and $\phi_0(D_\infty)=D_\infty'$. Hence $\phi_0(\partial D_0)=\partial D_0'$ and $\phi_0(\partial D_\infty)=\partial D_\infty'$. Moreover, $\phi_0$ can be chosen such that $\phi_0\circ f=F\circ\phi_0$ on $\partial D_0\cup \partial D_\infty$.

Since both $f:A_i\to A$ and $F:A_i'\to A'$ are covering mappings of degree $d_i$, where $1\leq i\leq n$, there exists a lift $\phi_1:A_i\to A_i'$, which is quasiconformal\footnote{Usually a quasiconformal map is defined in a domain. Here we mean that $\phi_1:A_i\to A_i'$ is the restriction of a quasiconformal map defined in an open annulus containing $A_i$.}, such that the following diagram is commutative:
\begin{equation}
\begin{CD}
A_i @>\phi_1>> A_i' \\
@VVf V   @VVF V \\
A @>\phi_0>> A'.
\end{CD}
\end{equation}
Note that $\phi_0\circ f=F\circ\phi_0$ on $\partial D_0\cup \partial D_\infty$. One can choose $\phi_1$ such that $\phi_1|_{\partial D_0}=\phi_0|_{\partial D_0}$ and $\phi_1|_{\partial D_\infty}=\phi_0|_{\partial D_\infty}$. The choices of the lifts $\phi_1:A_i\to A_i'$ for $2\leq i\leq n-1$ are not unique. We fix one choice of them.

\medskip
Define $\phi_1:=\phi_0$ on $D_0\cup D_\infty$. Then $\phi_1$ is defined on $\EC$ except in $\bigcup_{i=1}^{n-1}D_i$. Since all components of $f^{-1}(\partial D_0\cup\partial D_\infty)$ are quasicircles, one can extend $\phi_1$ continuously to the annuli $\{D_i\}_{1\leq i\leq n-1}$ by $\phi_1:D_i\to D_i'$, to obtain a quasiconformal mapping $\phi_1:\EC\to\EC$ such that \begin{itemize}
\item $\phi_1|_A$ is homotopic to $\phi_0|_A$ rel $\partial A=\partial D_0\cup\partial D_\infty$;
\item $\phi_0\circ f =F\circ \phi_1$ on $\bigcup_{i=1}^n A_i$; and
\item $\phi_1\circ f =F\circ \phi_1$ on $f^{-1}(\partial D_0\cup \partial D_\infty)$.
\end{itemize}

Now we define $\phi_2$. First, let $\phi_2|_{D_i}=\phi_1|_{D_i}$ for $i\in\{0,1,\cdots,n-1,\infty\}$. Since $\phi_1|_A$ is homotopic to $\phi_0|_A$ rel $\partial A$, it follows that there exist lifts $\phi_2:A_i\to A_i'$ of $\phi_1:A\to A'$ satisfying $\phi_1\circ f=F\circ\phi_2$, where $1\leq i\leq n$, such that $\phi_2:\EC\to\EC$ is continuous and $\phi_2|_A$ is homotopic to $\phi_1|_A$ rel $\partial A$. In particular, $\phi_2:\EC\to\EC$ is a quasiconformal mapping which satisfies
\begin{itemize}
\item The dilatation of $\phi_2$ satisfies $K(\phi_2)=K(\phi_1)$;
\item $\phi_2(z)=\phi_1(z)$ for all $z\in f^{-1}(D_0\cup D_\infty)$;
\item $\phi_1\circ f=F\circ\phi_2$ on $\bigcup_{i=1}^n A_i$; and
\item $\phi_2\circ f=F\circ\phi_2$ on $f^{-2}(\partial D_0\cup \partial D_\infty)$.
\end{itemize}

Suppose that we have obtained $\phi_{k-1}$ for some $k\geq 2$, then $\phi_k$ can be obtained completely similarly to the procedure above. Inductively, one can obtain a sequence of quasiconformal mappings $\{\phi_k:\EC\to\EC\}_{k\geq 0}$ such that for all $k\geq 1$, the following results hold:
\begin{itemize}
\item $K(\phi_k)=K(\phi_1)$;
\item $\phi_k(z)=\phi_{k-1}(z)$ for $z\in f^{-(k-1)}(D_0\cup D_\infty)$;
\item $\phi_{k-1}\circ f=F\circ\phi_k$ on $\bigcup_{i=1}^n A_i$; and
\item $\phi_k\circ f=F\circ\phi_k$ on $f^{-k}(\partial D_0\cup \partial D_\infty)$.
\end{itemize}

\smallskip
\textbf{Step 3}: \textit{The limit conjugates the dynamics on the Julia set to that on the attractor}. One can see that the sequence $\{\phi_k:\EC\to\EC\}_{k\geq 0}$ forms a normal family. Taking any convergent subsequence of $\{\phi_k:\EC\to\EC\}_{k\geq 0}$, we denote the limit by $\phi_\infty$. Then $\phi_\infty:\EC\to\EC$ is a quasiconformal mapping satisfying $\phi_\infty\circ f=F\circ\phi_\infty$ on $\bigcup_{k\geq 0} f^{-k}(\partial D_0\cup \partial D_\infty)$. Since $\phi_\infty$ is continuous, it follows that $\phi_\infty\circ f=F\circ\phi_\infty$ holds on the closure of $\bigcup_{k\geq 0}f^{-k}(\partial D_0\cup \partial D_\infty)$, which is the Julia set of $f$. Since $\phi_\infty(J(f))=J(\MC)$, this implies that $J(f)$ is quasisymmetrically equivalent to $J(\MC)$. The proof of Theorems \ref{thm:qs-classifi} and \ref{thm:quasi-unif} is finished.
\end{proof}

\begin{rmk}
If one uses the theory of combinatorial equivalence (see Appendix A in \cite{McM98b} for further details), then the proof of Theorem \ref{thm:qs-classifi} can be largely simplified. However, we present such detailed and more direct proof here since we need to use the following observations in the next section.

(1) In Step 2, $\phi_0:\EC\to\EC$ can be chosen such that it is $C^1$-continuous (even smooth) in $D_0\cup D_\infty$ since near $\partial D_0$ (resp. $\partial D_\infty$) $f$ is conformally conjugate to $z^{-d_1}$ (resp. $z^{d_n}$). Similarly, $\phi_1:D_i\to D_i'$ can be chosen such that it is $C^1$-continuous for all $1\leq i\leq n-1$. Then by definition, $\phi_\infty$ is $C^1$-continuous in $f^{-1}(D_0\cup D_\infty)$ since $\phi_k(z)=\phi_{k-1}(z)$ for all $k\geq 1$ and all $z\in f^{-(k-1)}(D_0\cup D_\infty)$.

(2) Since for all $k\geq 1$, one has $\phi_k(z)=\phi_{k-1}(z)$ for $z\in f^{-(k-1)}(D_0\cup D_\infty)$, $\phi_{k-1}\circ f=F\circ\phi_k$ holds on $\bigcup_{i=1}^n A_i$ and $\phi_k\circ f=F\circ\phi_k$ holds on $f^{-k}(\partial D_0\cup \partial D_\infty)$, it follows that $\phi_\infty\circ f(z)=F\circ\phi_\infty(z)$ holds for all $z\in \bigcup_{i=1}^n A_i$.
\end{rmk}

As an immediate corollary of Theorem \ref{thm:qs-classifi}, we have the following special result (see \cite[Theorem 1.1(b)]{QYY18}):

\begin{cor}
If the Julia set $J_\lambda$ of the McMullen map $f_\lambda(z)=z^q+\lambda/z^p$ is a Cantor circle, then $J_\lambda$ is quasisymmetrically equivalent to the standard Cantor circle $J(\Io;p,q)$, which is the attractor generated by the IFS $\{z^{-p}/e^p, z^q\}$.
\end{cor}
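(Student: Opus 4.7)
The plan is to verify $\MC(f_\lambda) = (\Io; p, q)$ directly from the local structure of McMullen maps, and then apply Theorem \ref{thm:qs-classifi}. From the expansions $f_\lambda(z) \sim z^q$ as $z \to \infty$ and $f_\lambda(z) \sim \lambda/z^p$ as $z \to 0$, the point $\infty$ is a superattracting fixed point of local degree $q$, while $0$ is a critical point of local degree $p$ with $f_\lambda(0) = \infty$. Thus the two simply connected critical Fatou components $D_\infty \ni \infty$ and $D_0 \ni 0$ satisfy $f_\lambda(D_\infty) = D_\infty = f_\lambda(D_0)$, which is exactly the defining relation for type $\Io$. Moreover, $\deg(f_\lambda|_{D_\infty}) = q$ and $\deg(f_\lambda|_{D_0}) = p$ already account for the total degree $p+q$ of $f_\lambda$ mapping to $D_\infty$, so $f_\lambda^{-1}(D_\infty) = D_0 \cup D_\infty$ exactly.

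Next I determine the annular preimages of $D_0$. The $(p{+}q)$-fold rotational symmetry $f_\lambda(\omega z) = \omega^q f_\lambda(z)$ for $\omega^{p+q} = 1$ forces the $p+q$ free critical points (the solutions of $z^{p+q} = p\lambda/q$) to lie on a common circle, hence inside a single annular Fatou component. By Riemann--Hurwitz, the degree of $f_\lambda$ restricted to any annular preimage of $D_0$ equals the number of free critical points it contains, so every additional annular preimage would have to have degree zero, which is impossible. Therefore $f_\lambda^{-1}(D_0)$ consists of a single annulus $D_1$ of degree $p+q$, which gives $n = 2$ together with $(d_1, d_2) = (p, q)$ from $\deg(f_\lambda|_{D_0}) = p$ and $\deg(f_\lambda|_{D_\infty}) = q$. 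Hence $\MC(f_\lambda) = (\Io; p, q)$.

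With this identification, Theorem \ref{thm:qs-classifi} immediately yields the quasisymmetric equivalence of $J_\lambda$ with $J(\Io; p, q)$. To match the IFS in the statement, I unwind \eqref{equ:L-MC} with $n = 2$, $d_1 = p$, $d_2 = q$, $b_1^- = -1$ and $b_2^+ = 0$, obtaining $\varphi_1^-(z) = z^{-p}/e^p$ and $\varphi_2^+(z) = z^q$, so that $\mathscr{L}(\Io; p, q) = \{z^{-p}/e^p,\, z^q\}$. The substantive work has been absorbed into Theorem \ref{thm:qs-classifi}, and the only obstacle specific to this corollary is the combinatorial bookkeeping above; this is routine given the rigid critical-point structure of the McMullen family, and no serious difficulty is expected.
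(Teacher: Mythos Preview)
Your proof is correct and follows exactly the route the paper intends: identify the combination $\MC(f_\lambda)=(\Io;p,q)$ and invoke Theorem~\ref{thm:qs-classifi}. The paper states this corollary without proof as an ``immediate corollary'' of Theorem~\ref{thm:qs-classifi}, so your only addition is spelling out the combinatorial verification (type~$\Io$, $n=2$, $(d_1,d_2)=(p,q)$) and unpacking \eqref{equ:L-MC}; the one step you pass over quickly---that the free critical circle lies in a single Fatou component---follows from the $(p{+}q)$-fold rotational symmetry fixing each annular Fatou component, a standard fact for McMullen maps.
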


For each given combination $\MC\in\mathscr{C}$, the definition of the standard Cantor circle $J(\MC)$ depends on the partition of the unit interval $[-1,0]$ (if $n\geq 3$). See \eqref{equ:inteval-parti}. However, from the proof of Theorem \ref{thm:qs-classifi} we have the following immediate result.

\begin{cor}\label{cor:stand-CC}
All standard Cantor circles with the same combination $\MC\in\mathscr{C}$ (the partitions of $[-1,0]$ in \eqref{equ:inteval-parti} are allowed to be different) are in the same quasisymmetrically equivalent class.
\end{cor}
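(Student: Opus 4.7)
Given two partitions of $[-1,0]$ satisfying \eqref{equ:inteval-parti} with the same gaps $b_i^+ - b_i^- = 1/d_i$, denote the resulting modified IFSs by $\mathscr{L}(\MC)$ and $\mathscr{L}'(\MC)$, and their attractors by $J(\MC)$ and $J'(\MC)$. Following Step 1 of the proof of Theorem \ref{thm:qs-classifi}, I would extend each of these IFSs to a $C^1$-quasiregular self-map of $\EC$, obtaining two maps $F$ and $F'$ of degree $d=\sum_{i=1}^n d_i$. By construction, $F$ and $F'$ share the \emph{same} Type I combinatorial skeleton: round-circle boundaries of the critical Fatou components $D_0',D_1',\ldots,D_{n-1}',D_\infty'$ (and primed analogues for $F'$), and identical covering degrees $d_i$ on the non-critical annuli $A_i'$.

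Next I would repeat Steps 2 and 3 of that same proof almost verbatim, but with $F$ taking the role of the rational map $f$ and $F'$ taking the role of the model quasiregular map. More precisely, choose a quasiconformal $\phi_0:\EC\to\EC$ sending $D_0'$ and $D_\infty'$ to their double-primed counterparts and arranged so that $\phi_0\circ F=F'\circ\phi_0$ on $\partial D_0'\cup\partial D_\infty'$; then lift iteratively through the covering maps $F|_{A_i'}:A_i'\to A'$ of degree $d_i$, at each stage extending across the critical annuli $D_i'$ using the fact that $\partial D_i'$ is a round circle (hence a quasicircle). This yields $\phi_k:\EC\to\EC$ with $K(\phi_k)=K(\phi_1)$ for all $k\geq1$. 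A normal-family argument produces a quasiconformal limit $\phi_\infty$ satisfying $\phi_\infty\circ F=F'\circ\phi_\infty$ on the closure of $\bigcup_{k\ge 0}F^{-k}(\partial D_0'\cup\partial D_\infty')$, which contains $J(\MC)$. Since $\phi_\infty(J(\MC))=J'(\MC)$, the restriction $\phi_\infty|_{J(\MC)}$ is the desired quasisymmetric equivalence.

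The only things to verify for the step-by-step adaptation are that the argument of Theorem \ref{thm:qs-classifi} nowhere used holomorphicity of $f$ beyond (i) the covering structure of $f$ on the non-critical annuli and (ii) the fact that the boundaries of the critical Fatou components are quasicircles; both properties hold for $F$ and $F'$ by construction. Thus no genuinely new obstacle arises: the claim is a direct translation of the proof of Theorem \ref{thm:qs-classifi} in which both source and target are standard models rather than a genuine rational map, and the work is essentially bookkeeping the lifting/extension steps in this simplified setting.
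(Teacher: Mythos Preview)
Your proposal is correct and is precisely what the paper intends: the corollary is stated there as immediate from the proof of Theorem~\ref{thm:qs-classifi}, and you have correctly identified that the lifting/extension argument uses only the covering structure on the non-critical annuli and the quasicircle property of the boundaries $\partial D_i'$, both of which hold trivially (indeed, the boundaries are round circles) for the quasiregular models $F$ and $F'$. An even shorter route, once one knows that a hyperbolic rational map $f$ with combination $\MC$ exists (Theorem~\ref{thm-QYY}), is transitivity: the proof of Theorem~\ref{thm:qs-classifi} yields $J(f)\sim J(\MC)$ for \emph{every} admissible partition, so all such $J(\MC)$ are quasisymmetrically equivalent via the common $J(f)$.
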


From Corollary \ref{cor:stand-CC} we know that the classes of quasisymmetrically equivalent Cantor circles are determined by the combinatorial data but not the geometric information. 






\section{Topological conjugacy and hyperbolic components}

In order to find \textit{all} rational maps (in the sense of topological conjugacy on the Julia sets) whose Julia sets are Cantor circles, the following Theorem \ref{thm-QYY} was proved in \cite{QYY15}.

\begin{thm}\label{thm-QYY}
For any $\varrho\in\{0,1\}$ and $n\geq 2$ positive integers $d_1,\cdots,d_n$ satisfying $\sum_{i=1}^{n}\frac{1}{d_i}<1$, there are parameters $a_1,\cdots,a_{n-1}$ such that the Julia set of
\begin{equation}\label{family-QYY}
f_{\varrho,d_1,\cdots,d_n}(z)=z^{(-1)^{n-\varrho} d_1}\prod_{i=1}^{n-1}(z^{d_i+d_{i+1}}-a_i^{d_i+d_{i+1}})^{(-1)^{n-i-\varrho}}
\end{equation}
is a Cantor circle. Moreover, any rational map whose Julia set is a Cantor circle must be topologically conjugate to $f_{\varrho,d_1,\cdots,d_n}$ for some $\varrho$ and $d_1,\cdots,d_n$ on their corresponding Julia sets.
\end{thm}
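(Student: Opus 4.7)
The plan is to split the theorem into a \emph{realization} step (existence of parameters $a_1,\ldots,a_{n-1}$) and a \emph{rigidity} step (topological conjugacy of every Cantor circle rational map to a representative of \eqref{family-QYY}).

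For realization, fix $(\varrho;d_1,\ldots,d_n)$ with $\sum 1/d_i<1$ and let $\kappa=\kappa(\varrho,n)\in\{\Io,\It,\Is\}$ be the type it determines (even $n$ forces $\Io$; for odd $n$, $\varrho=1$ gives $\It$ and $\varrho=0$ gives $\Is$). I would produce, by quasi-conformal surgery exactly as in the proof of Theorem~\ref{thm:qs-classifi}, a hyperbolic rational map $f$ of degree $d=\sum d_i$ with combinatorial datum $(\kappa;d_1,\ldots,d_n)$: build a $C^1$-quasi-regular model $\widetilde F:\EC\to\EC$ with the prescribed type and local degrees, spread a $\widetilde F$-invariant Beltrami form along the forward orbit, and straighten by the measurable Riemann mapping theorem to get $f=\phi\circ\widetilde F\circ\phi^{-1}$. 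To identify $f$ with a member of \eqref{family-QYY}, I would normalize so that $\phi(0)=0$ and $\phi(\infty)=\infty$ and read off the divisor of $f$: an order-$d_1$ zero/pole at $0$ with sign $(-1)^{n-\varrho}$, an order-$d_n$ zero/pole at $\infty$ with sign $(-1)^{\varrho}$, and on each critical annulus $D_i'$ exactly $d_i+d_{i+1}$ simple preimages of $0$ (or $\infty$) with sign $(-1)^{n-i-\varrho}$. If both $\widetilde F$ and the invariant Beltrami form on $D_i'$ are chosen to commute with the cyclic rotation of order $d_i+d_{i+1}$ that permutes these preimages of the super-attracting target, then the straightening sends the preimages to the $(d_i+d_{i+1})$-th roots of $a_i^{d_i+d_{i+1}}$ for some $a_i>0$. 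Since a rational map is determined up to a multiplicative constant by its divisor on $\EC$,
\[
f(z)=C\cdot z^{(-1)^{n-\varrho}d_1}\prod_{i=1}^{n-1}(z^{d_i+d_{i+1}}-a_i^{d_i+d_{i+1}})^{(-1)^{n-i-\varrho}},
\]
and a final rescaling $z\mapsto\lambda z$ absorbs $C$ into the $a_i$ to yield $f_{\varrho,d_1,\ldots,d_n}$.

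For rigidity, let $g$ be any rational map (hyperbolic or parabolic) with Cantor circle Julia set; its combinatorial datum $\MC(g)$ is intrinsic. In the hyperbolic case the proof of Theorem~\ref{thm:qs-classifi} supplies a quasi-conformal conjugacy from $(g,J(g))$ to the standard model on $J(\MC(g))$; composing with the analogous conjugacy for the representative $f_{\varrho,d_1,\ldots,d_n}$ built in the realization step produces the desired topological conjugacy on Julia sets. In the parabolic case the same combinatorial matching argument applies at the level of Julia components (each now a quasi-circle possibly with cusps), yielding a topological rather than quasi-conformal conjugacy; the key point is that the nested preimage structure of $\EC\setminus(D_0\cup D_\infty)$ depends only on $\MC(g)$.

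The main obstacle I anticipate is the equivariant surgery step: arranging $\widetilde F$ and its invariant Beltrami form to respect, on each critical annulus, the full cyclic symmetry of order $d_i+d_{i+1}$ that forces the preimages of $\{0,\infty\}$ onto a round circle as the roots of $z^{d_i+d_{i+1}}=a_i^{d_i+d_{i+1}}$. Since the cyclic orders typically differ from annulus to annulus, one cannot impose a single global rotational symmetry and must instead match local symmetries through the non-symmetric interpolation regions. As an alternative route that sidesteps this technicality, one may argue directly: for $a_1\gg a_2\gg\cdots\gg a_{n-1}>0$ real, explicitly compute the preimages of a large disk around $0$ and a small disk around $\infty$ in the family \eqref{family-QYY}, and use the real symmetry $z\mapsto\bar z$ to confine all free critical orbits to the super-attracting basins, thereby forcing the Julia set to be a Cantor circle of the prescribed combinatorics.
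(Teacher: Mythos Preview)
The paper does not prove this theorem; it is quoted from \cite{QYY15} (see the sentence introducing it). The only trace of the argument visible here is Theorem~\ref{parameter-restate}, which records the explicit parameter choices from \cite{QYY15} --- the $|a_i|$ are prescribed as concrete powers of a small real $\tau$ --- together with Lemma~\ref{lema:annulus}, which locates the critical and non-critical annuli and shows that each non-critical annulus maps over the full target. So the actual proof is the direct, computational route you sketch at the very end as an ``alternative'': pick specific $a_i(\tau)$ and verify by estimates on nested round annuli that the combinatorics are as required. One correction there: the ordering is $0<|a_1|\ll|a_2|\ll\cdots\ll|a_{n-1}|\ll1$ by \eqref{equ:a-1-a-2}, not $a_1\gg\cdots\gg a_{n-1}$. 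Your rigidity step via the standard model is essentially the right idea and matches how the paper uses the combinatorial datum elsewhere.

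Your primary surgery argument, however, does not reach the target. Straightening the quasi-regular model produces \emph{some} rational map with Cantor circle Julia set and datum $(\kappa;d_1,\ldots,d_n)$, but nothing forces it into the algebraic family \eqref{family-QYY}: the $d_i+d_{i+1}$ preimages of $0$ (or $\infty$) in the $i$-th critical annulus will lie on a quasi-circle, not on a set of the form $\{z:z^{d_i+d_{i+1}}=a_i^{d_i+d_{i+1}}\}$. Your proposed fix is to make the integrating Beltrami coefficient invariant under the rotation $z\mapsto e^{2\pi\ii/(d_i+d_{i+1})}z$ for every $i$, hence under the rotation of order $N=\mathrm{lcm}_i(d_i+d_{i+1})$; but on $D_i'$ the two boundary values of the interpolation are $z^{(-1)^i d_i}$ and $z^{(-1)^{i+1}d_{i+1}}$, which intertwine this rotation with two \emph{different} powers of itself unless $N\mid d_i+d_{i+1}$, i.e.\ unless all the $d_i+d_{i+1}$ coincide. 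You flag this obstacle yourself, and your proposal does not resolve it. The realization step has to be carried out inside the explicit family, as \cite{QYY15} does.
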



Theorem \ref{thm-QYY} gives a complete topological classification of the Cantor circle Julia sets of rational maps under the dynamical behaviors.
To study the hyperbolic components of Cantor circle type, we hope to find a representative map with the form \eqref{family-QYY} in each Cantor circle hyperbolic component. This is one of the motivations to prove the following result.

\begin{thm}\label{thm:conj-resta}
Let $f$, $g$ be two hyperbolic rational maps with the same degree $d$ whose Julia sets are Cantor circles on which they are topologically conjugate. Then $f$ and $g$ lie in the same hyperbolic component of the moduli space $\MM_d$.
\end{thm}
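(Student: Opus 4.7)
My strategy is to build a continuous path in $\MM_d$ of hyperbolic rational maps of degree $d$ with Cantor circle Julia sets joining $\langle f\rangle$ to $\langle g\rangle$. The path will factor through a common ``model'' determined by the shared combinatorics.

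\emph{Step 1 (common combinatorial data).} An orientation-preserving topological conjugacy $\phi:\EC\to\EC$ between $(f,J(f))$ and $(g,J(g))$ sends critical Fatou components of $f$ to those of $g$ bijectively, preserving cyclic order, adjacency, and local mapping degrees. Hence $\MC(f)\cap\MC(g)\neq\varnothing$; fix a common $\MC=(\kappa;d_1,\ldots,d_n)\in\mathscr{C}$ and let $\widetilde F$ be the quasiregular reference map for $\MC$ produced in Step~1 of the proof of Theorem~\ref{thm:qs-classifi}, with its critical Fatou components $D_0',D_1',\ldots,D_{n-1}',D_\infty'$ and non-critical annuli $A_1',\ldots,A_n'$.

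\emph{Step 2 (quasiconformal straightening).} Theorem~\ref{thm:qs-classifi} furnishes quasiconformal maps $\phi_f,\phi_g:\EC\to\EC$ with $\phi_f\circ f=\widetilde F\circ \phi_f$ on $J(f)\cup\bigcup_{i=1}^n A_i$, and analogously for $g$. By the remark following that theorem they may be taken $C^1$ on each critical Fatou component, so $\phi_f\circ f\circ\phi_f^{-1}$ and $\phi_g\circ g\circ\phi_g^{-1}$ are $C^1$-quasiregular self-maps of $\EC$ that both agree with $\widetilde F$ on $\bigcup_{i=1}^n A_i'$ and near $\partial D_0',\partial D_\infty'$, differing from $\widetilde F$ only by the choice of interpolations on the critical annuli $D_1',\ldots,D_{n-1}'$.

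\emph{Step 3 (isotopy inside critical annuli).} On each $D_i'$ the two interpolations arising from $f$ and from $g$ are $C^1$-quasiregular branched coverings of a fixed degree $d_i+d_{i+1}$ from the annulus $\overline{D_i'}$ onto a disk ($D_0'$ or $D_\infty'$, determined by $\kappa$ and the parity of $i$), with the \emph{same} boundary power-map behavior prescribed by $\MC$. The appendix (Appendix~\ref{sec:appendix}) classifies such branched coverings up to isotopy through $C^1$-quasiregular branched coverings of the same degree relative to the boundary data; since both interpolations realize the same boundary invariant, the appendix result lets me connect them by a one-parameter family. Patching these isotopies over $i=1,\ldots,n-1$ and leaving $\widetilde F$ unchanged elsewhere produces a continuous family $(\widetilde F_t)_{t\in[0,1]}$ of $C^1$-quasiregular maps of degree $d$, with $\widetilde F_0$ conjugate (via $\phi_f$) to $f$ and $\widetilde F_1$ conjugate (via $\phi_g$) to $g$, and with uniformly bounded dilatations supported in $\bigcup_{i=1}^{n-1}D_i'$.

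\emph{Step 4 (integration).} Pull the standard complex structure back along the iterates of $\widetilde F_t$ through the non-critical annuli into $\bigcup_{i=1}^{n-1}D_i'$, and extend by the standard structure on $D_0'\cup D_\infty'$ and on the Julia set. Because each critical orbit lands in $D_0'\cup D_\infty'$ after finitely many iterations, this yields an $\widetilde F_t$-invariant Beltrami coefficient $\mu_t$ with $\|\mu_t\|_\infty\leq k<1$, depending continuously on $t$. Solving the Beltrami equation with a suitable M\"obius normalization gives quasiconformal maps $\psi_t$ so that $f_t:=\psi_t\circ\widetilde F_t\circ\psi_t^{-1}$ is a rational map of degree $d$, with $f_0$ M\"obius-conjugate to $f$ and $f_1$ M\"obius-conjugate to $g$, and with $t\mapsto\langle f_t\rangle\in\MM_d$ continuous. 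Each $f_t$ is hyperbolic (all critical orbits stay in the moving Fatou set by construction), so the theorem follows.

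\emph{Main obstacle.} The heart of the argument lies in Step~3: two annulus-to-disk branched coverings of the same degree with identical boundary power-maps need not be isotopic through branched coverings, and one must isolate the correct twist/winding invariant and check that the interpolations coming from $f$ and $g$ share it. This classification of homotopy classes of maps from annuli to disks is the precise role of the appendix, and supplying it is the step that requires genuine technical work; the remaining steps are standard quasiconformal surgery once Step~3 is in place.
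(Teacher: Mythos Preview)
Your overall architecture is close to the paper's, but there is a genuine gap in Step~3. Lemma~\ref{lem:homotopy-Cui} does \emph{not} say that two annulus-to-disk branched covers with identical boundary power maps are isotopic rel boundary; it says they become isotopic only after precomposing one of them with $T_q^{\circ k}$ for some integer $k$. There is no reason the interpolations coming from $\phi_f\circ f\circ\phi_f^{-1}$ and from $\phi_g\circ g\circ\phi_g^{-1}$ should have $k=0$ on each $D_i'$; these integers depend on the (non-canonical) lifting choices made when building $\phi_f$ and $\phi_g$ in the proof of Theorem~\ref{thm:qs-classifi}. So the plan ``check that the interpolations share the twist invariant'' cannot succeed as stated.

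What the paper actually does---and what is missing from your outline---is show that the twist integer is \emph{irrelevant}, because it can be absorbed. On each non-critical annulus $A_{i+1}'$ the reference map is a pure power $z^{\pm d_{i+1}}$, hence invariant under the rotation $z\mapsto z\,e^{2\pi\ii/d_{i+1}}$. One therefore untwists $D_i'$ by precomposing with a continuous family of rotations that is supported on $D_i'\cup A_{i+1}'\cup D_{i+1}'$, returns to the original map on $A_{i+1}'$ at time $1$, and merely shifts the twist obstruction to $D_{i+1}'$. Iterating this from $i=1$ to $i=n-1$ pushes the residual twist into $A_n'\cup D_\infty'$, where it vanishes because $D_\infty'$ is a disk. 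This propagation mechanism is the substance of Step~3 of the paper's proof, and you cannot replace it by an appeal to the appendix alone. A secondary issue: for your Step~4 the standard structure on $D_0'\cup D_\infty'$ must be $\widetilde F_t$-invariant, which requires the straightened maps to be holomorphic there; this is why the paper first deforms $f$ and $g$ to have \emph{super}-attracting simply connected basins (its Step~1) before straightening.
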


\begin{proof}
The proof will be divided into several steps. Since $f$ and $g$ are conjugate on their Julia sets, they have the same combinatorial data. Without loss of generality, we assume that they have the same combination $\MC=(\Io;d_1,\cdots,d_n)\in\mathscr{C}$. The rest two types of combinations can be treated completely similarly. The idea of the proof can be summed up as following: For $f$ we assume that the attracting cycle is super-attracting. Then we prove that $f$ is quasiconformally conjugated to a quasiregular map $\widetilde{F}$ whose restriction on some annuli is exactly the IFS $\mathscr{L}(\MC)$ (see the definition in \S\ref{subsec-quasisym}). Next we deform the map $\widetilde{F}$ and construct a continuous path $(\widetilde{F}_t)_{t\in[0,1]}$ of quasiregular maps such that $\widetilde{F}_0=\widetilde{F}$ and $\widetilde{F}_1=F$, where $F:\EC\to\EC$ is the quasiregular map defined in \eqref{equ:F-qr}. From this one can obtain a continuous path $(f_t)_{t\in[0,1]}$ of hyperbolic rational maps such that $f_0=f$ and $f_1=\xi_1\circ F\circ\xi_1^{-1}$ for some quasiconformal mapping $\xi_1:\EC\to\EC$.

Similarly, the same construction guarantees the existence of continuous path $(g_t)_{t\in[0,1]}$ of hyperbolic rational maps such that $g_0=g$ and $g_1=\xi_1\circ F\circ\xi_1^{-1}=f_1$ (Careful: the quasiconformal map $\xi_1:\EC\to\EC$ corresponding to $f_1$ and to $g_1$ is the same!). Note that the map $F$ here is the same map as in the previous paragraph. Then the theorem follows since one can connect $f$ with $g$ by a continuous path in the hyperbolic component. Now we make the proof precisely.

\medskip
\textbf{Step 1}: \textit{Transferring attracting to super-attracting (multi-critical to unicritical)}. Let $\MH$ be the Cantor circle hyperbolic component containing $f$. According to \cite[Chap.\,4]{BF14}, by performing a standard quasiconformal surgery, there exists a continuous path in $\MH$ connecting $f$ with $\widehat{f}$, such that $\widehat{f}$ has a super-attracting basin $D_\infty$ with super-attracting fixed point $\infty$ in which $\widehat{f}_1$ is conjugate to $z\mapsto z^{d_n}$, and moreover, $\widehat{f}:D_0\setminus\{0\}\to D_\infty\setminus\{\infty\}$ is a covering map of degree $d_1$ (note that $\widehat{f}$ has the same combination as $f$).

\medskip
\textbf{Step 2}: \textit{From rational maps to quasiregular maps}. For saving the notations, we assume that the given $f$ is exactly $\widehat{f}$. We continue using the notations, such as $D_i$, $A_i$, $A$ and etc, for a rational map (and hence $f$) with Cantor circle Julia set as in \S\ref{subsec-comb}.
Since the combination $\MC$ is of type I, it implies that $n\geq 2$ is even and
\begin{equation}
\mathscr{L}(\MC)=\{\varphi_1^-,\varphi_2^+,\cdots,\varphi_n^+\}=\{z^{-d_1}/e^{d_1},z^{d_2}/e^{b_2^+ d_2},\cdots,z^{d_n}\}
\end{equation}
is the IFS defined in \eqref{equ:L-MC}. At this step we construct a quasiconformal conjugacy between $f$ and a quasiregular map $\widetilde{F}:\EC\to\EC$, such that the restriction of $\widetilde{F}$ on the union of the annuli $\bigcup_{i=1}^n\overline{\A}(e^{b_i^-},e^{b_i^+})$ is exactly the IFS $\mathscr{L}(\MC)$, where $(b_i^\pm)_{i=1}^n$ are numbers given in \eqref{equ:inteval-parti}. As before, we denote $D_0':=\D(0,1/e)$, $D_\infty':=\EC\setminus\overline{\D}$, $A_i':=\overline{\A}(e^{b_i^-},e^{b_i^+})$ with $1\leq i\leq n$, and $D_i':=\A(e^{b_i^+},e^{b_{i+1}^-})$ with $1\leq i\leq n-1$.

Let $F:\EC\to\EC$ be the $C^1$-quasiregular map\footnote{Note that in the proof of this theorem, $F$ is seen to be fixed.} defined in \eqref{equ:F-qr}. Then $F$ is $C^1$-continuous on $\EC$. There exists a quasiconformal mapping $\phi_0:\EC\rightarrow\EC$ such that
\begin{itemize}
\item $\phi_0(D_0)=D_0'$, $\phi_0(D_\infty)=D_\infty'$;
\item $\phi_0\circ f(z)=F\circ\phi_0(z)$ for all $z\in D_0\cup D_\infty$; and
\item $\phi_0$ is conformal in $D_0$ and $D_\infty$ (in fact they are B\"{o}ttcher coordinates).
\end{itemize}
By using a completely similar argument as in the proof of Theorem \ref{thm:qs-classifi}, one can obtain a sequence of quasiconformal mappings $(\phi_k:\EC\to\EC)_{k\geq 0}$ such that for all $k\geq 1$, the following statements hold:
\begin{itemize}
\item the dilatation of $\phi_k$ satisfies $K(\phi_k)=K(\phi_1)$;
\item $\phi_k(z)=\phi_{k-1}(z)$ for $z\in f^{-(k-1)}(D_0\cup D_\infty)$;
\item $\phi_{k-1}\circ f=F\circ\phi_k$ on $\bigcup_{i=1}^n A_i$; and
\item $\phi_k\circ f=F\circ\phi_k$ on $f^{-k}(\partial D_0\cup \partial D_\infty)$.
\end{itemize}
Note that $(\phi_k:\EC\to\EC)_{k\geq 0}$ is a normal family. Taking a convergent subsequence of $(\phi_k)_{k\geq 0}$ whose limit is denoted by $\phi_\infty:\EC\to\EC$, we have $\phi_\infty\circ f=F\circ\phi_\infty$ on $D_0\cup D_\infty\cup\bigcup_{i=1}^n A_i$. The map
\begin{equation}
\widetilde{F}(z):=\phi_\infty\circ f\circ\phi_\infty^{-1}(z):\EC\to\EC
\end{equation}
is quasiregular and $\widetilde{F}=F$ on $D_0'\cup D_\infty'\cup\bigcup_{i=1}^n A_i'$. In particular, the restriction of $\widetilde{F}$ on $\bigcup_{i=1}^n A_i'$ is exactly the IFS $\mathscr{L}(\MC)$.

By the construction of $\phi_n$ (see the remark following the proof of Theorem \ref{thm:qs-classifi}), we can choose the sequence $\{\phi_k\}_{k\in\N}$ such that the limit $\phi_\infty:\EC\to\EC$ is $C^1$-continuous in $\bigcup_{i=1}^{n-1}D_i$. This implies that $\widetilde{F}$ is holomorphic in $\EC\setminus \bigcup_{i=1}^{n-1}\overline{D}_i'$ and $C^1$-continuous in $\bigcup_{i=1}^{n-1}D_i'$.


\medskip
\textbf{Step 3}: \textit{Partial-twist deformations in the annuli}. Although both $\widetilde{F}$ and $F$ are quasiregular extensions of the IFS $\mathscr{L}(\MC)$ on $\bigcup_{i=1}^n A_i'$, $\widetilde{F}$ needs not to be homotopic to $F$ rel $\partial D_i'$ for some $1\leq i\leq n-1$. Indeed, for $1\leq i\leq n-1$, it turns out that (see Lemma \ref{lem:homotopy-Cui} in Appendix \ref{sec:appendix}) there exists $k_i'\in\Z$ such that $\widetilde{F}\circ T_{d_{i+1}}^{\circ k_i'}|_{\overline{D}_i'}$ is homotopic to $F|_{\overline{D}_i'}$ rel $\partial D_i'$, where
\begin{equation}
T_{d_{i+1}}(z)=z e^{\frac{2\pi\ii}{d_{i+1}}\frac{|z|-s_i}{r_i-s_i}}
\end{equation}
is a \textit{partial-twist} map along $\overline{D}_i'$, $z\in \overline{D}_i'=\overline{\A}(s_i,r_i)$, $s_i=e^{b_i^+}$ and $r_i=e^{b_{i+1}^-}$.

\medskip
Recall that $n\geq 2$ is even. In the following we assume that $n\geq 4$ since the argument for case $n=2$ is completely similar and easier. Define $k_1:=k_1'$.
For every $t\in[0,1]$, we define a family of mappings by setting
\begin{equation}
F_t^1(z):=
\begin{cases}
\widetilde{F}(z e^{-k_1\frac{2\pi\ii}{d_2}\frac{|z|-s_1}{r_1-s_1} t})  & \text{if } z\in D_1', \\
\widetilde{F}(z e^{-k_1\frac{2\pi\ii}{d_2} t})  & \text{if } z\in A_2', \\
\widetilde{F}(z e^{-k_1\frac{2\pi\ii}{d_2}\frac{r_2-|z|}{r_2-s_2} t})  & \text{if } z\in D_2', \\
\widetilde{F}(z)  & \text{other}.
\end{cases}
\end{equation}
It is straightforward to verify that $F_t^1:\EC\to\EC$ is quasiregular, holomorphic in $\EC\setminus \bigcup_{i=1}^{n-1}\overline{D}_i'$ for every $t\in[0,1]$, $F_t^1$ depends continuously on $t\in[0,1]$ and $\partial F_t^1(z)/\partial z$, $\partial F_t^1(z)/\partial \overline{z}$ depend continuously on $t\in[0,1]$ for every $z\in\EC\setminus\bigcup_{j=1}^{n-1}\partial D_j'$. Moreover,
\begin{itemize}
\item $F_0^1=\widetilde{F}$;
\item $F_1^1=\widetilde{F}$ on $\EC\setminus (D_1'\cup D_2')$;
\item $F_1^1|_{\overline{D}_1'}$ is homotopic to $F|_{\overline{D}_1'}$ rel $\partial D_1'$; and
\item there exists $k_2\in\Z$ such that $F_1^1\circ T_{d_3}^{\circ k_2}|_{\overline{D}_2'}$ is homotopic to $F|_{\overline{D}_2'}$ rel $\partial D_2'$.
\end{itemize}

Inductively, for $2\leq i\leq n-2$ and $t\in[0,1]$, we define
\begin{equation}
F_t^i(z):=
\begin{cases}
F_1^{i-1}(z e^{-k_i\frac{2\pi\ii}{d_{i+1}}\frac{|z|-s_i}{r_i-s_i} t})  & \text{if } z\in D_i', \\
F_1^{i-1}(z e^{-k_i\frac{2\pi\ii}{d_{i+1}} t})  & \text{if } z\in A_{i+1}', \\
F_1^{i-1}(z e^{-k_i\frac{2\pi\ii}{d_{i+1}}\frac{r_{i+1}-|z|}{r_{i+1}-s_{i+1}} t})  & \text{if } z\in D_{i+1}', \\
F_1^{i-1}(z)  & \text{other},
\end{cases}
\end{equation}
where $\{k_{i+1}\in\Z:2\leq i\leq n-2\}$ is determined as following: when $F_t^i$ is defined, there exists $k_{i+1}\in\Z$ such that $F_1^i\circ T_{d_{i+2}}^{\circ k_{i+1}}|_{\overline{D}_{i+1}'}$ is homotopic to $F|_{\overline{D}_{i+1}'}$ rel $\partial D_{i+1}'$. It is easy to see that $F_t^i:\EC\to\EC$ is quasiregular, holomorphic in $\EC\setminus \bigcup_{i=1}^{n-1}\overline{D}_i'$ for every $t\in[0,1]$, $F_t^i$ depends continuously on $t\in[0,1]$ and $\partial F_t^i(z)/\partial z$, $\partial F_t^i(z)/\partial \overline{z}$ depend continuously on $t\in[0,1]$ for every $z\in\EC\setminus\bigcup_{j=1}^{n-1}\partial D_j'$. Moreover,
\begin{itemize}
\item $F_0^i=F_1^{i-1}$;
\item $F_1^i=\widetilde{F}$ on $\EC\setminus \bigcup_{j=1}^{i+1}D_j'$; and
\item $F_1^i|_{\overline{D}_j'}$ is homotopic to $F|_{\overline{D}_j'}$ rel $\partial D_j'$ for all $1\leq j\leq i$.
\end{itemize}

For $t\in[0,1]$, we define
\begin{equation}
F_t^{n-1}(z):=
\begin{cases}
F_1^{n-2}(z e^{-k_{n-1}\frac{2\pi\ii}{d_n}\frac{|z|-s_{n-1}}{r_{n-1}-s_{n-1}} t})  & \text{if } z\in D_{n-1}', \\
F_1^{n-2}(z e^{-k_{n-1}\frac{2\pi\ii}{d_n} t})  & \text{if } z\in A_n'\cup D_\infty', \\
F_1^{n-2}(z)  & \text{other}.
\end{cases}
\end{equation}
Then $F_t^{n-1}:\EC\to\EC$ is quasiregular, holomorphic in $\EC\setminus \bigcup_{i=1}^{n-1}\overline{D}_i'$ for every $t\in[0,1]$, $F_t^{n-1}$ depends continuously on $t\in[0,1]$ and $\partial F_t^{n-1}(z)/\partial z$, $\partial F_t^{n-1}(z)/\partial \overline{z}$ depend continuously on $t\in[0,1]$ for every $z\in\EC\setminus\bigcup_{j=1}^{n-1}\partial D_j'$. Moreover,
\begin{itemize}
\item $F_0^{n-1}=F_1^{n-2}$;
\item $F_1^{n-1}=\widetilde{F}$ on $\EC\setminus \bigcup_{j=1}^{n-1}D_j'$; and
\item $F_1^{n-1}|_{\overline{D}_j'}$ is homotopic to $F|_{\overline{D}_j'}$ rel $\partial D_j'$ for all $1\leq j\leq n-1$.
\end{itemize}

For $t\in [0,1]$, we define
\begin{equation}
F_t(z):=
\begin{cases}
\widetilde{F}(z)  & \text{if } t=0, \\
F_{(n-1)t-(i-1)}^i  & \text{if } t\in(\frac{i-1}{n-1},\frac{i}{n-1}] \text{ for }1\leq i\leq n-1.
\end{cases}
\end{equation}
Then $F_t:\EC\to\EC$ is quasiregular, holomorphic in $\EC\setminus \bigcup_{i=1}^{n-1}\overline{D}_i'$ for every $t\in[0,1]$, $F_t$ depends continuously on $t\in[0,1]$ and $\partial F_t(z)/\partial z$, $\partial F_t(z)/\partial \overline{z}$ depend continuously on $t\in[0,1]$ for every $z\in\EC\setminus\bigcup_{j=1}^{n-1}\partial D_j'$. Moreover,
\begin{itemize}
\item $F_1=F_1^{n-1}=\widetilde{F}$ on $\EC\setminus \bigcup_{j=1}^{n-1} D_j'$; and
\item $F_1|_{\overline{D}_j'}$ is homotopic to $F|_{\overline{D}_j'}$ rel $\partial D_j'$ for all $1\leq j\leq n-1$.
\end{itemize}

Finally, let $(\widehat{F}_t:\EC\to\EC)_{t\in[0,1]}$ be a continuous path of quasiregular maps such that
$\widehat{F}_0=F_1$, $\widehat{F}_1=F$ and $\widehat{F}_t=F_1$ on $\EC\setminus \bigcup_{i=1}^{n-1} D_i'$.
In particular, the path can be chosen such that\footnote{The reason is that both $F_1$ and $F$ are holomorphic in $\EC\setminus \bigcup_{i=1}^{n-1}\overline{D}_i'$ and $C^1$-continuous in $\bigcup_{i=1}^{n-1}D_i'$. See Lemma \ref{lem:homotopy-Cui}.} $\widehat{F}_t$ is holomorphic in $\EC\setminus \bigcup_{i=1}^{n-1}\overline{D}_i'$ for every $t\in[0,1]$, $\widehat{F}_t$ depends continuously on $t\in[0,1]$ and $\partial \widehat{F}_t(z)/\partial z$, $\partial \widehat{F}_t(z)/\partial \overline{z}$ depend continuously on $t\in[0,1]$ for every $z\in\EC\setminus\bigcup_{j=1}^{n-1}\partial D_j'$.
Denote by
\begin{equation}
\widetilde{F}_t(z):=
\begin{cases}
F_{2t}(z)  & \text{if } t\in[0,1/2], \\
\widehat{F}_{2t-1}(z)  & \text{if } t\in (1/2,1].
\end{cases}
\end{equation}
Then $\widetilde{F}_t:\EC\to\EC$ is quasiregular, holomorphic in $\EC\setminus \bigcup_{i=1}^{n-1}\overline{D}_i'$ for every $t\in[0,1]$, $\widetilde{F}_t$ depends continuously on $t\in[0,1]$, and $\partial \widetilde{F}_t(z)/\partial z$, $\partial \widetilde{F}_t(z)/\partial \overline{z}$ depend continuously on $t\in[0,1]$ for every $z\in\EC\setminus\bigcup_{j=1}^{n-1}\partial D_j'$. Moreover,
\begin{itemize}
\item $\widetilde{F}_0=\widetilde{F}$, $\widetilde{F}_1=F$; and
\item $\widetilde{F}_1=\widetilde{F}$ on $\EC\setminus \bigcup_{i=1}^{n-1} D_i'$.
\end{itemize}
Therefore, although $\widetilde{F}$ and $F$ are (probably) different quasiregular extensions of the IFS $\mathscr{L}(\MC)$ on $\bigcup_{i=1}^n A_i'$, we have found a continuous path of quasiregular maps $(\widetilde{F}_t:\EC\to\EC)_{t\in[0,1]}$ connecting $\widetilde{F}$ with $F$.


\medskip
\textbf{Step 4}: \textit{The continuous paths in the hyperbolic component}.
Let $\sigma_0$ be the standard conformal structure on $\EC$ represented by the zero Beltrami differential. For each $t\in[0,1]$ we define a measure conformal structure function
\begin{equation}
\sigma_t(z):=
\begin{cases}
\sigma_0(z)  & \text{if } z\in D_0'\cup D_\infty', \\
((\widetilde{F}_t^{\circ \ell})^*\sigma_0)(z)  & \text{if } z\in \widetilde{F}_t^{-(\ell-1)}(\bigcup_{i=1}^{n-1}D_i') \text{ for some } \ell\geq 1, \\
\sigma_0(z)  & \text{other}.
\end{cases}
\end{equation}
Since each $\widetilde{F}_t$ is holomorphic in $\EC\setminus\bigcup_{i=1}^{n-1}\overline{D}_i'$, it is easy to see that $\sigma_t$ has bounded dilatation and is invariant under the action of $\widetilde{F}_t$. According to Measurable Riemann Mapping Theorem, there exists a unique quasiconformal map $\xi_t:\EC\to\EC$ which solves the Beltrami equation $\xi_t^*(\sigma_0)=\sigma_t$ and fixes $0$, $1$ and $\infty$. Note that $\sigma_t$ depends continuously on $t\in[0,1]$ (since each $\widetilde{F}_t$ is holomorphic in $\EC\setminus \bigcup_{i=1}^{n-1}\overline{D}_i'$ and $\partial \widetilde{F}_t(z)/\partial z$, $\partial \widetilde{F}_t(z)/\partial \overline{z}$ depend continuously on $t\in[0,1]$ for every $z\in\EC\setminus\bigcup_{j=1}^{n-1}\partial D_j'$).
By Ahlfors-Bers theorem \cite{AB60}, the map
\begin{equation}
\widetilde{f}_t:=\xi_t\circ \widetilde{F}_t\circ \xi_t^{-1}
\end{equation}
is a rational map which depends continuously on $t\in[0,1]$. In particular, $\widetilde{f}_0=\xi_0\circ \widetilde{F}\circ \xi_0^{-1}$, $\widetilde{f}_1=\xi_1\circ F\circ \xi_1^{-1}$ and each $\widetilde{f}_t$ with $t\in[0,1]$ is a hyperbolic rational map with a Cantor circle Julia set.

Since $\widetilde{F}=\phi_\infty\circ f\circ\phi_\infty^{-1}$ and $\widetilde{f}_0=\xi_0\circ \widetilde{F}\circ \xi_0^{-1}$, we have
\begin{equation}
\widetilde{f}_0=\widetilde{\phi}\circ f\circ\widetilde{\phi}^{-1},
\end{equation}
where $\widetilde{\phi}:=\xi_0\circ\phi_\infty:\EC\to\EC$ is a quasiconformal mapping.
For $s\in[0,1]$, define a conformal structure $\widetilde{\sigma}_s=s\widetilde{\phi}^*(\sigma_0)$. Since $f$ is a rational map, $\widetilde{\sigma}_s$ is preserved by $f$. By the Measurable Riemann
Mapping Theorem, there exists a unique quasiconformal mapping $\zeta_s:\EC\to\EC$ which solves the Beltrami equation $\zeta_s^*(\sigma_0)=\widetilde{\sigma}_s$ and fixes $0$, $1$ and $\infty$. Define $\widehat{f}_s:=\zeta_s\circ f\circ\zeta_s^{-1}$. Then $\widehat{f}_s$ is a hyperbolic rational map with a Cantor circle Julia set for all $s\in[0,1]$. According to Ahlfors-Bers \cite{AB60}, $(\widehat{f}_s)_{s\in[0,1]}$ is a continuous path connecting $f$ with $\widetilde{f}_0$.

For $t\in[0,1]$, we define
\begin{equation}
f_t(z):=
\begin{cases}
\widehat{f}_{2t}(z)  & \text{if } t\in[0,1/2], \\
\widetilde{f}_{2t-1}(z)  & \text{if } t\in (1/2,1].
\end{cases}
\end{equation}
Then $f_t$ depends continuously on $t\in[0,1]$ and each $f_t$ is a hyperbolic rational map with a Cantor circle Julia set. In particular, $(f_t)_{t\in[0,1]}$ is a continuous path in the hyperbolic component $\MH$ connecting $f_0=f$ with $f_1=\xi_1\circ F\circ \xi_1^{-1}$.

\medskip
\textbf{Step 5}: \textit{The conclusion}.
If we begin with the rational map $g$ whose combination is also $\MC=(\Io;d_1,\cdots,d_n)$, then as above one can find a continuous path $(g_t)_{t\in[0,1]}$ in a hyperbolic component connecting $g_0=g$ with $g_1=\xi_1\circ F\circ \xi_1^{-1}$ (Careful: not $\xi_1\circ G\circ \xi_1^{-1}$ for some $G$ since $F$ is the given quasiregular mapping depending only on the combination $\MC$, see \eqref{equ:F-qr}). Therefore, $f$ and $g$ can be connected by a continuous path in the hyperbolic component $\MH$.
This completes the proof of Theorem \ref{thm:conj-resta} and hence Theorem \ref{thm:conj}.
\end{proof}

\begin{rmk}
Let $f_1$ and $f_2$ be two hyperbolic rational maps with degree $d$ whose Julia sets are Cantor circles. If $f_1$ and $f_2$ have the same combinatorial data in $\mathscr{C}$, then from Theorem \ref{thm:conj-resta} we know that they lie in the same hyperbolic component of the moduli space $\MM_d$.
\end{rmk}

Recall that $f_{\varrho,d_1,\cdots,d_n}$ is the family introduced in \eqref{family-QYY}.
Let
\begin{equation}
d_{\max}:=\max\{d_1,\cdots,d_n\} \quad\text{and}\quad \eta:=\sum_{i=1}^n\frac{1}{d_i}<1.
\end{equation}
The parameters $a_1,\cdots,a_{n-1}$ in Theorem \ref{thm-QYY} can be chosen more specifically as in the following theorem (see \cite[Theorem 2.5]{QYY15}).

\begin{thm}\label{parameter-restate}
Let $u_1=\tau\,d_{\max}^{-5}$, $v_1=\tau\,d_{\max}^{-2}$;  and $u_0=\tau^{1+1/d_n+2(1-\eta)/3}$, $v_0=\tau^{1/d_n+(1-\eta)/3}$.
\begin{enumerate}
\item For $\varrho=1$, set $|a_{n-1}|=v_1^{1/d_n}$ and $|a_i|=u_1^{1/d_{i+1}}|a_{i+1}|$ for $1\leq i\leq n-2$;
\item For $\varrho=0$, set $|a_{n-1}|=v_0^{1/d_n}$ and $|a_i|=u_0^{1/d_{i+1}}|a_{i+1}|$ for $1\leq i\leq n-2$.
\end{enumerate}
Then $J(f_{\varrho,d_1,\cdots,d_n})$ is a Cantor circle if $\tau>0$ is small enough.
\end{thm}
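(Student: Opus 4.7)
The plan is to verify the Cantor-circle structure of $J(f_{\varrho,d_1,\cdots,d_n})$ directly from the explicit formula \eqref{family-QYY}, by constructing test annuli whose images under $f=f_{\varrho,d_1,\cdots,d_n}$ realize the covering data required by \S\ref{subsec-comb}. First, I would observe that under the prescribed scalings, as $\tau\to 0^+$ the moduli satisfy $|a_1|\ll|a_2|\ll\cdots\ll|a_{n-1}|\ll 1$ quantitatively: for example when $\varrho=1$, $|a_i/a_{i+1}|^{d_{i+1}}=u_1=\tau\,d_{\max}^{-5}$, so consecutive scales differ by a definite small factor. Between the $n-1$ circles $\{|z|=|a_j|\}$ I would choose test annuli $A_1',\cdots,A_n'$ whose moduli stay bounded below uniformly in $\tau$, together with a small disk $D_0$ around $0$ and a small neighborhood $D_\infty$ of $\infty$.

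Second, on each $A_i'$ I would expand $f$ by replacing each factor $(z^{d_j+d_{j+1}}-a_j^{d_j+d_{j+1}})^{(-1)^{n-j-\varrho}}$ by its dominant term: by $z^{(-1)^{n-j-\varrho}(d_j+d_{j+1})}$ when $|z|\gg|a_j|$ (i.e.\ $j<i$), and by $(-a_j^{d_j+d_{j+1}})^{(-1)^{n-j-\varrho}}$ when $|z|\ll|a_j|$ (i.e.\ $j\geq i$). The signed exponents of $z$ telescope, reducing the net $z$-dependence to $z^{\varepsilon_i d_i}$ with $\varepsilon_i=(-1)^{n-i+1-\varrho}\in\{\pm 1\}$, while the remaining multiplicative constant $C_i$ is an explicit monomial in the $|a_j|$'s. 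Hence $f(z)=C_i\,z^{\varepsilon_i d_i}(1+o(1))$ uniformly on $A_i'$ as $\tau\to 0^+$, which exhibits $f|_{A_i'}$ as a near-pure power covering of degree $d_i$.

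Third, substituting the prescribed scalings into $|C_i|$, I would pick the radii of $D_0$ and $D_\infty$ so that $f(A_i')$ properly covers the big annulus $A=\EC\setminus(D_0\cup D_\infty)$ and each $A_i'$ is essentially contained in $A$. The alternating signs $\varepsilon_i$ encode which of $D_0$ or $D_\infty$ is the image, and a case distinction on $\varrho$ and on the parity of $n-i$ reproduces the three combinatorial types described in \S\ref{subsec-comb}. Once this structure is in place, a standard polynomial-like-map argument applied annulus-by-annulus exhibits $J(f)$ as the nested intersection $\bigcap_{k\geq 0} f^{-k}\bigl(\bigcup_i A_i'\bigr)$; the hypothesis $\sum_i 1/d_i<1$ supplies via Gr\"otzsch's inequality the modulus gap that forces this intersection to be a Cantor set of quasi-circles.

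The main obstacle I anticipate lies in the third step: the exponents chosen for $u_\varrho,v_\varrho$ must simultaneously leave enough room for the essential containment $A_i'\Subset A$ (a lower-modulus condition on $A$) and for the image $f(A_i')$ to strictly contain $A$ (a size condition on the constant $|C_i|$). These two requirements pull against each other, and the asymmetric shape of $u_0,v_0$ (with the extra factors $1/d_n$ and $1-\eta$) relative to the simpler $u_1,v_1$ reflects the fact that for $\varrho=0$ the components $D_0$ and $D_\infty$ play \emph{genuinely different} dynamical roles (as in types $\It$ or $\Is$), whereas for $\varrho=1$ with $n$ even the parity-symmetry between critical Fatou components permits a coarser choice. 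Verifying that the precise numerical exponents in $u_\varrho,v_\varrho$ suffice to balance these opposing constraints, uniformly in the data $(d_1,\cdots,d_n)$ subject to $\sum 1/d_i<1$, is the delicate core of the proof.
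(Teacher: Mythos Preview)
The paper does not actually prove this theorem: it is quoted verbatim from \cite[Theorem 2.5]{QYY15} and used as a black box (the companion Lemma~\ref{lema:annulus} is likewise cited from \cite[Lemma 2.4]{QYY15}). So there is no ``paper's own proof'' against which to compare your argument.

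That said, your outline is the correct strategy and matches what one finds in \cite{QYY15}: approximate $f$ on each shell $A_i'$ between consecutive critical circles by the pure power $C_i z^{\varepsilon_i d_i}$, check that the resulting annulus covers properly contain the big annulus $A$, and invoke the Gr\"otzsch inequality (via $\sum 1/d_i<1$) to force the nested intersection to be a Cantor set of circles. Your identification of the tension in Step~3 is exactly right and is where the specific exponents in $u_\varrho,v_\varrho$ earn their keep. One small imprecision: your remark that ``for $\varrho=0$ the components $D_0$ and $D_\infty$ play genuinely different dynamical roles (as in types $\It$ or $\Is$)'' conflates the case split; type $\It$ in fact corresponds to $\varrho=1$ with $n$ odd, while $\varrho=0$ gives type $\Is$ (odd $n$) or the case conjugate to type $\Io$ (even $n$). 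The real reason $u_0,v_0$ carry the extra $1/d_n$ and $(1-\eta)$ factors is that for $\varrho=0$ the point $\infty$ is not a superattracting \emph{fixed} point, so the outer radius $R_\infty$ of the test annulus must be calibrated against the image scale rather than chosen freely; this forces a finer tuning of all the inner scales as well.
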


If $\tau>0$ is small enough, we have
\begin{equation}\label{equ:a-1-a-2}
0<|a_1|\ll |a_2|\ll \cdots\ll |a_{n-1}|\ll 1.
\end{equation}
Since at least one of $0$ and $\infty$ (or both) lies in the super-attracting basins of $f_{\varrho,d_1,\cdots,d_n}$, we can define the corresponding annulus $A_i$ with $1\leq i\leq n$ and $D_i$ with $1\leq i\leq n-1$ for $f_{\varrho,d_1,\cdots,d_n}$ (see \S\ref{subsec-comb}). From \cite[Lemma 2.4]{QYY15} we know that $D_i$ contains the circle $\T_{|a_i|}$ and $d_i+d_{i+1}$ critical points for all $1\leq i\leq n-1$.

\medskip
In the following, we always assume that $a_i's$ are chosen as in Theorem \ref{parameter-restate} such that the Julia set of $f_{\varrho,d_1,\cdots,d_n}$ is a Cantor set of circles.
Then there are following four cases (Here we denote by $f:=f_{\varrho,d_1,\cdots,d_n}$ for simplicity):
\begin{enumerate}
\item If $\varrho=1$ and $n$ is even, then $f(D_0)=D_\infty$ and $f(D_\infty)=D_\infty$;
\item If $\varrho=1$ and $n$ is odd, then $f(D_0)=D_0$ and $f(D_\infty)=D_\infty$;
\item If $\varrho=0$ and $n$ is odd, then $f(D_0)=D_\infty$ and $f(D_\infty)=D_0$;
\item If $\varrho=0$ and $n$ is even, then $f(D_0)=D_0$ and $f(D_\infty)=D_0$.
\end{enumerate}

Note that up to topological conjugacies, we only need to consider the first three cases since every map of case (d) is conjugate to some map of case (a) on their corresponding Julia sets (compare \S\ref{subsec-comb}). In particular, cases (a), (b) and (c) have combinations $(\Io;d_1,\cdots,d_n)$, $(\It;d_1,\cdots,d_n)$ and $(\Is;d_1,\cdots,d_n)$ respectively.
As an immediate corollary of Theorem \ref{thm:conj-resta}, we have

\begin{cor}\label{cor:typical-ele}
Any Cantor circle hyperbolic component $\MH$ in $\MM_d$ contains at least one map $f_{\varrho,d_1,\cdots, d_n}$ with the parameters $a_1,\cdots,a_{n-1}$ given in Theorem \ref{parameter-restate}.
\end{cor}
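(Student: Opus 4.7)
My plan is to invoke, in sequence, Theorem \ref{thm-QYY} and Theorem \ref{thm:conj-resta}. Fix any representative $f\in\MH$; by definition $f$ is a hyperbolic rational map of degree $d$ whose Julia set is a Cantor circle, and it carries some combinatorial data $(\kappa;d_1,\ldots,d_n)\in\mathscr{C}$ as described in \S\ref{subsec-comb}. Theorem \ref{thm-QYY}, together with the explicit parameter choice of Theorem \ref{parameter-restate}, then furnishes a model map $f_{\varrho,d_1,\ldots,d_n}$ of the form \eqref{family-QYY} whose Julia set is a Cantor circle topologically conjugate to $J(f)$ as a dynamical system, with $\varrho$ and the parity of $n$ matching the type $\kappa$ of $f$.

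Before applying Theorem \ref{thm:conj-resta} I must verify that the model $f_{\varrho,d_1,\ldots,d_n}$ fulfills its hypotheses. The degree is $\sum_{i=1}^n d_i = d$ by the combinatorial relation \eqref{equ:combi-data}, so it coincides with the degree of $f$. Hyperbolicity is clear from the discussion in the four cases (a)--(d) immediately following Theorem \ref{parameter-restate}: the points $0$ and $\infty$ form a super-attracting cycle of period $1$ or $2$, and when the parameters $a_1,\ldots,a_{n-1}$ are chosen as in Theorem \ref{parameter-restate} so that $J(f_{\varrho,d_1,\ldots,d_n})$ is a Cantor circle, all remaining critical orbits must be attracted to this super-attracting cycle (otherwise the postcritical set would meet the Julia set and destroy the Cantor circle structure).

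With both $f$ and $f_{\varrho,d_1,\ldots,d_n}$ established as hyperbolic rational maps of the same degree $d$ whose Cantor circle Julia sets carry topologically conjugate dynamics, Theorem \ref{thm:conj-resta} applies directly and places the two maps in the same hyperbolic component of $\MM_d$. Since $f\in\MH$, this forces $f_{\varrho,d_1,\ldots,d_n}\in\MH$, which is the claim. I anticipate no real obstacle: the corollary is essentially a repackaging of the two preceding results, and the substantive work—the quasiregular interpolation of Step 2 and the partial-twist deformation in Steps 3--4 of the proof of Theorem \ref{thm:conj-resta}—has already been carried out.
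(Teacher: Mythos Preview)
Your argument is correct and follows exactly the route the paper intends: the paper presents the corollary as an immediate consequence of Theorem \ref{thm:conj-resta} (combined implicitly with Theorems \ref{thm-QYY} and \ref{parameter-restate}) and gives no further proof. Your write-up simply makes explicit the verification of degree and hyperbolicity that the paper takes for granted.
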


If a hyperbolic component of rational maps of degree $d\geq 2$ has compact closure in $\MM_d$, then this hyperbolic component is called \emph{bounded}. A theorem of Makienko asserts that if the Julia set of a hyperbolic rational map is disconnected, then the hyperbolic component containing this rational map is \textit{unbounded} (see \cite{Mak00}). Note that each Cantor circle Julia set is disconnected. Therefore, we have

\begin{cor}\label{cor:unbounded}
All Cantor circle hyperbolic components in $\MM_d$ are unbounded.
\end{cor}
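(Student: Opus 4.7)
The plan is essentially a one-line chain of citations, so I will describe how the pieces should be assembled rather than elaborate on a long argument. First I would record that every Cantor circle Julia set, being by definition homeomorphic to the Cartesian product of the middle-third Cantor set with $\mathbb{T}$, has uncountably many connected components; in particular it is a disconnected subset of $\EC$. This is a purely topological observation and requires no dynamics beyond the hypothesis that $J(f)$ is a Cantor circle.

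Next I would invoke Makienko's theorem from \cite{Mak00}, which asserts that whenever $f\in\Rat_d$ is hyperbolic with $J(f)$ disconnected, the hyperbolic component of $\MM_d$ containing $\langle f\rangle$ is unbounded (i.e., fails to have compact closure in $\MM_d$). Combining this with the previous paragraph, pick any representative $f\in\MH$ of a Cantor circle hyperbolic component; then $J(f)$ is a Cantor circle, hence disconnected, and Makienko's theorem applies to give unboundedness of $\MH$.

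The only subtle point — and the step that I would flag as the sole obstacle — is purely bookkeeping: one must make sure the representative $f$ chosen really is hyperbolic as a rational map on $\EC$ (which is part of the definition of a Cantor circle hyperbolic component as used in the paper) and that the notion of boundedness in $\MM_d$ matches the one in Makienko's statement. Both are immediate from the conventions fixed in the Introduction and in the definition of hyperbolic component, so no genuine difficulty arises. No further ingredients beyond \cite{Mak00} and the topological structure of Cantor circles are required.
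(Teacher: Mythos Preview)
Your proposal is correct and matches the paper's primary argument exactly: the paper also simply observes that a Cantor circle Julia set is disconnected and invokes Makienko's theorem \cite{Mak00}. The paper additionally supplies a second, self-contained proof that avoids Makienko by using Corollary~\ref{cor:typical-ele} to place a map $f_{\varrho,d_1,\cdots,d_n}$ with arbitrarily small parameters in $\MH$, so that $f_{\varrho,d_1,\cdots,d_n}$ degenerates to $z\mapsto z^{\pm d_1}$ of strictly smaller degree, forcing $\MH$ to be unbounded.
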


Based on Corollary \ref{cor:typical-ele}, we can give another proof of Corollary \ref{cor:unbounded} by avoiding the use of Makienko's theorem.

\begin{proof}[{Another proof of Corollary \ref{cor:unbounded}}]
Let $\MH$ be a Cantor circle hyperbolic component in $\MM_d$. By Corollary \ref{cor:typical-ele}, $\MH$ contains at least one map $f_{\varrho,d_1,\cdots,d_n}$ in \eqref{family-QYY}. In particular, the parameters $a_1$, $\cdots$, $a_{n-1}$ of $f_{\varrho,d_1,\cdots,d_n}$ can be chosen arbitrarily small (see Theorem \ref{parameter-restate}). Therefore, $f_{\varrho,d_1,\cdots,d_n}$ is a small perturbation of $z\mapsto z^{d_1}$ or $z\mapsto z^{-d_1}$. It implies that $\MH$ is unbounded since $\deg(f_{\varrho,d_1,\cdots,d_n})=\sum_{i=1}^n d_i>d_1$ for $n\geq 2$.
\end{proof}

\section{Number of Cantor circle hyperbolic components}\label{sec-No-CC}

The aim of this section is to calculate the number of Cantor circle hyperbolic components in the moduli space $\MM_d$, for any given $d\geq 2$.

\begin{prop}\label{no-comp-intro}
Let $f$ be a rational map whose Julia set is a Cantor circle. Then $\deg(f)\geq 5$.
\end{prop}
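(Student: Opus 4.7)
The plan is to reduce the statement to an elementary minimization problem, using only the combinatorial data already derived for any Cantor circle rational map. By the setup in \S\ref{subsec-comb}, there exist an integer $n\geq 2$ and positive integers $d_1,\ldots,d_n$ with
\[
\sum_{i=1}^n d_i \;=\; \deg(f) \quad\text{and}\quad \sum_{i=1}^n \frac{1}{d_i} \;<\; 1,
\]
cf.\ \eqref{equ:combi-data}. My goal is to minimize $\sum d_i$ subject to these constraints.

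First I would verify that every $d_i\geq 2$: if some $d_j=1$ then $1/d_j=1$ already saturates the reciprocal bound, which is impossible under the strict inequality. Then I would split on $n$. For $n=2$, the candidate $(d_1,d_2)=(2,2)$ is excluded by the equality $1/2+1/2=1$, so at least one of $d_1,d_2$ must be $\geq 3$; thus $\deg(f)\geq 5$, and the pair $(2,3)$ satisfies $1/2+1/3=5/6<1$, so the bound is consistent. For $n\geq 3$, I would consider two subcases. If no $d_i$ equals $2$, then $\deg(f)\geq 3n\geq 9$. If some $d_i=2$, then the remaining $n-1$ integers satisfy $\sum_{j\neq i}1/d_j<1/2$, which forces each $d_j\geq 3$ (a second $d_j=2$ would push the partial reciprocal sum up to $\geq 1/2$); hence $\deg(f)\geq 2+3(n-1)\geq 8$. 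In every scenario $\deg(f)\geq 5$.

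There is no real obstacle here beyond keeping track of which $d_i$ may equal $2$; the whole argument is elementary arithmetic on the Gr\"otzsch-type inequality already in hand. What the bound really records is the sharpness of this minimum, which is realized for $n=2$ by pairs such as $(d_1,d_2)=(2,3)$, corresponding to genuine Cantor circle examples (e.g., the McMullen maps with $p=2$, $q=3$).
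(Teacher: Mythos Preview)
Your proof is correct and follows exactly the paper's approach: both reduce to the observation that the combinatorial constraints \eqref{equ:combi-data} have no solution with $\sum_i d_i\leq 4$. The paper's proof is the one-liner ``If $d\leq 4$, then \eqref{equ:combi-data} has no solution,'' whereas you spell out the elementary case analysis that justifies this claim.
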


\begin{proof}
If $d\leq 4$, then \eqref{equ:combi-data} has no solution.
\end{proof}

Note that Proposition \ref{no-comp-intro} is also valid for parabolic rational maps.
In the following we use $\sharp \,X$ to denote the cardinal number of a finite set $X$.

\begin{thm}\label{no-of-hyper-com}
For every $d\geq 5$, the number $N(d)$ of Cantor circle hyperbolic components in $\MM_d$ is calculated by \eqref{equ:N-d}.
\end{thm}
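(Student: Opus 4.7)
The plan is to interpret $N(d)$ as the number of topological conjugacy classes (on the Julia set) of hyperbolic rational maps of degree $d$ with Cantor circle Julia set, and then count combinations in $\mathscr{C}$. By Theorem \ref{thm:conj}, hyperbolic components in $\MM_d$ with Cantor circle Julia set correspond bijectively to such conjugacy classes. Corollary \ref{cor:typical-ele} together with Theorem \ref{thm-QYY} shows each class is represented by some $f_{\varrho,d_1,\dots,d_n}$; modulo the observation that case (d) is conjugate to case (a), the four cases listed in \S3 reduce to the three types $\Io$, $\It$, $\Is$ described in \S\ref{subsec-comb}. Each map $f$ carries a combinatorial invariant $\MC(f)\subset\mathscr{C}$ determined entirely by the dynamics on the Julia set, so two maps are conjugate on their Julia sets iff $\MC(f_1)\cap\MC(f_2)\neq\emptyset$ (equivalently, iff they share a combination).

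Next, I would invoke Lemma \ref{lema:diff-comb} to make the ambiguity in $\MC(f)$ explicit: it is a singleton in type $\Io$ and in the palindromic type $\It$/$\Is$ cases, and a two-element set $\{(\kappa;d_1,\dots,d_n),(\kappa;d_n,\dots,d_1)\}$ otherwise. Consequently, counting hyperbolic components amounts to counting elements of $\mathscr{C}$ satisfying $\sum d_i=d$ modulo the equivalence that identifies $(\kappa;d_1,\dots,d_n)\sim(\kappa;d_n,\dots,d_1)$ precisely when $\kappa\in\{\It,\Is\}$. Writing $T_n^d$ for the number of tuples $(d_1,\dots,d_n)\in\N^n$ with $\sum d_i=d$ and $\sum 1/d_i<1$, and $P_n^d$ for the subcount of palindromic tuples, type $\Io$ contributes $\sum_{\text{even }n\geq 2}T_n^d$ (no identification), while each of types $\It$ and $\Is$ contributes $\frac{1}{2}\sum_{\text{odd }n\geq 3}(T_n^d+P_n^d)$. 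Adding these gives
\begin{equation*}
N(d)=\sum_{\text{even }n\geq 2}T_n^d+2\cdot\sum_{\text{odd }n\geq 3}\frac{T_n^d+P_n^d}{2}=\sum_{n\geq 2}T_n^d+\sum_{\text{odd }n\geq 3}P_n^d,
\end{equation*}
which is exactly \eqref{equ:N-d}. Finiteness of $N(d)$ is immediate: the constraint $\sum 1/d_i<1$ forces every $d_i\geq 2$, so $n\leq d/2$ and only finitely many tuples occur for each $d$.

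The delicate point is the bookkeeping in step two — confirming that the ambiguity of $\MC(f)$ under reversal is exactly the one predicted by Lemma \ref{lema:diff-comb}, so as neither to overcount (by distinguishing a non-palindromic pair that actually gives one hyperbolic component) nor undercount (by conflating types $\Io$, $\It$, $\Is$ or by forgetting that case (d) merges with case (a)). Beyond this, the remaining work is genuinely combinatorial: translating the equivalence relation on $\mathscr{C}_d:=\{\MC\in\mathscr{C}:\sum d_i=d\}$ into the inclusion–exclusion identity above. Since Theorem \ref{thm:conj} and Lemma \ref{lema:diff-comb} have already been established, the proof reduces to this careful accounting, which is what I would present.
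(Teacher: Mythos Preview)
Your proposal is correct and follows essentially the same approach as the paper: reduce to counting topological conjugacy classes via Theorem~\ref{thm:conj}, use Lemma~\ref{lema:diff-comb} to identify the reversal ambiguity in types~$\It$ and~$\Is$, and then perform the orbit count. Your notation $T_n^d$, $P_n^d$ and the Burnside-style formula $\tfrac{1}{2}(T_n^d+P_n^d)$ for each of types~$\It$, $\Is$ is just a repackaging of the paper's sets $N^\Io$, $N_1^\kappa$, $N_2^\kappa$ and the identity $N(d)=(\sharp N^\Io+\sharp N_1^\It+\sharp N_2^\It)+\sharp N_1^\It$.
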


\begin{proof}
According to Theorem \ref{thm:conj}, it is sufficient to calculate the different topologically conjugate classes of the rational maps when they restrict on the Cantor circle Julia sets. For this, we consider the combinations of such rational maps. There are three types in all (see \S\ref{subsec-comb}). Obviously, the dynamics on these three types of Cantor circle Julia sets are not topologically conjugate to each other.

\medskip
For each given $d\geq 5$, we define
\begin{equation}
N^\Io:= \left\{(\Io;d_1,\cdots,d_n)\left|~\sum_{i=1}^nd_i=d, \sum_{i=1}^n\frac{1}{d_i}<1~\text{and}~ n\geq 2 \text{~is even}\right.\right\}.
\end{equation}
For each given $d\geq 5$ and $\kappa\in\{\It,\Is\}$, we define
\begin{equation}
\begin{split}
N_1^\kappa&:=
\left\{(\kappa;d_1,\cdots,d_n)
\left|
\begin{array}{l}
\sum_{i=1}^nd_i=d, \sum_{i=1}^n\frac{1}{d_i}<1 \\
(d_1,\cdots,d_n) = (d_n,\cdots,d_1) ~\text{and}~ n\geq 3 \text{~is odd}
\end{array}
\right.
\right\} \text{ and} \\
N_2^\kappa&:=
\left\{(\kappa;d_1,\cdots,d_n)
\left|
\begin{array}{l}
\sum_{i=1}^nd_i=d, \sum_{i=1}^n\frac{1}{d_i}<1 \\
(d_1,\cdots,d_n)\neq (d_n,\cdots,d_1) ~\text{and}~ n\geq 3 \text{~is odd}
\end{array}
\right.
\right\}.
\end{split}
\end{equation}

Note that $\sharp N_1^\It=\sharp N_1^\Is$ and $\sharp N_2^\It=\sharp N_2^\Is$. By Lemma \ref{lema:diff-comb}, the number of different topologically conjugate classes (consider the restriction on the Julia sets) of the rational maps lying in Cantor circle hyperbolic components in $\MM_d$ is calculated by
\begin{equation}
N(d)=\,\sharp N^\Io + \sharp N_1^\It + \sharp N_2^\It/2 + \sharp N_1^\Is + \sharp N_2^\Is/2
=\, (\sharp N^\Io+\sharp N_1^\It+\sharp N_2^\It) +\sharp N_1^\It.
\end{equation}
This ends the proof of Theorem \ref{no-of-hyper-com} and hence Theorem \ref{thm:number-CC}.
\end{proof}

By the enumerative method, one can calculate $N(d)$ easily for any given $d\geq 5$ by Theorem \ref{thm:number-CC}. See Table \ref{Tab_number-of-new}.

\begin{table}[htpb]
\renewcommand{\arraystretch}{1.1}
\begin{center}
\begin{tabular}{|c|c|c|c|c|c|c|c|c|}\hline
\rowcolor{mygreen}
 $d$     & 5     & 6     & 7      & 8      & 9      & 10     & 11     & 12     \\ \hline
 $N(d)$  & 2     & 3     & 4      & 5      & 6      & 11     & 22     & 37     \\ \hline\hline
\rowcolor{mygreen}
 $d$     & 13    & 14    & 15     & 16     & 17     & 18     & 19     & 20     \\ \hline
 $N(d)$  & 46    & 57    & 68     & 81     & 110    & 159    & 228    & 290    \\ \hline\hline
\rowcolor{mygreen}
 $d$     & 21    & 22    & 23     & 24     & 25     & 26     & 27     & 28     \\ \hline
 $N(d)$  & 410   & 519   & 716    & 872    & 1070   & 1323   & 1722   & 2258   \\  \hline\hline
\rowcolor{mygreen}
 $d$     & 29    & 30    & 31     & 32     & 33     & 34     & 35     & 36     \\ \hline
 $N(d)$  & 3066  & 4227  & 5566   & 6950   & 8604   & 10483  & 12916  & 15838 \\ \hline
\end{tabular}
\vskip0.1cm
 \caption{The list of the number $N(d)$ of Cantor circle hyperbolic components in the moduli space of rational maps of degree $d$, where $5\leq d\leq 36$. }
 \label{Tab_number-of-new}
\end{center}
\end{table}


\section{Hausdorff dimension of Cantor circles: The infimum}\label{sec:dim-inf}

Recall that $f_{\varrho,d_1,\cdots,d_n}$ is the family defined in Theorem \ref{family-QYY}. The aim of this section is to find the infimum of the Hausdorff dimensions of the Julia sets of the rational maps in the Cantor circle hyperbolic components. Since the lower bound of the Hausdorff dimensions of the Cantor circle Julia sets can be obtained easily (see Proposition \ref{prop:conf-dim-new}), according to Corollary \ref{cor:typical-ele}, it is sufficient to work with the family $f_{\varrho,d_1,\cdots,d_n}$ and prove that it can produce a sequence of Hausdorff dimensions which approach the lower bound. Then the lower bound becomes the infimum.

\subsection{Conformal dimension of Cantor circle Julia sets}\label{subsec-conf-dim}

Let $X$ be a metric space. The \textit{conformal dimension} $\dim_{C}(X)$ of $X$ is the infimum of the Hausdorff dimensions of all metric spaces which are quasisymmetrically equivalent to $X$. Note that the conformal dimension is an invariant of the quasisymmetrically equivalent class of a metric space.
Recall that $\alpha_{d_1,\cdots,d_n}\in(0,1)$ is the number determined by Equation \eqref{equ:root}.

\begin{prop}\label{prop:conf-dim-new}
Let $\MH$ be a Cantor circle hyperbolic component whose combination is $\MC=(\kappa;d_1,\cdots,d_n)\in\mathscr{C}$. Then $\dim_C(J(f))=1+\alpha_{d_1,\cdots,d_n}$ for all $f\in\MH$.
\end{prop}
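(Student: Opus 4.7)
The plan is to reduce the computation to the standard Cantor circle $J(\MC)$ and then invoke a modulus-based lower bound for conformal dimension. The key inputs already at our disposal are Theorem~\ref{thm:qs-classifi} (which provides a quasisymmetric uniformization $J(f)\sim J(\MC)$) and Lemma~\ref{lema:Hdim} (which computes $\dim_H J(\MC)=1+\alpha_{d_1,\ldots,d_n}$).

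First, by Theorem~\ref{thm:qs-classifi}, every Julia set $J(f)$ with $f\in\MH$ is quasisymmetrically equivalent to the standard Cantor circle $J(\MC)$. Since the conformal dimension is by definition a quasisymmetric invariant, it is enough to compute $\dim_C J(\MC)$. The upper bound is immediate: $J(\MC)$ is a legitimate representative of its own quasisymmetric class, so
\begin{equation*}
\dim_C J(\MC)\leq \dim_H J(\MC)=1+\alpha_{d_1,\ldots,d_n}
\end{equation*}
by Lemma~\ref{lema:Hdim}.

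For the matching lower bound, I would exploit the explicit product structure of $J(\MC)$. By the definition in \S\ref{subsec-quasisym}, $J(\MC)$ is the image under $z\mapsto e^z$ of $A(\MC)\times(\R/2\pi\Z)$, where $A(\MC)$ is the attractor of a self-similar IFS satisfying the open set condition with contraction ratios $\{1/d_i\}_{i=1}^n$. Hence $A(\MC)$ is Ahlfors $\alpha_{d_1,\ldots,d_n}$-regular with respect to the $\alpha$-dimensional Hausdorff measure, and because $z\mapsto e^z$ is bi-Lipschitz on a neighborhood of the support, $J(\MC)$ is Ahlfors $(1+\alpha)$-regular. Moreover, the connected components of $J(\MC)$ form a family of round circles $\Gamma$; a standard Fubini-type decomposition of the $(1+\alpha)$-Hausdorff measure as a product of the $\alpha$-measure on $A(\MC)$ with arc-length measure on each circle shows that $\operatorname{mod}_{1+\alpha}\Gamma>0$.

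With these two hypotheses in place, the lower bound $\dim_C J(\MC)\geq 1+\alpha_{d_1,\ldots,d_n}$ follows from Tyson's theorem (see e.g.\ Mackay--Tyson, \emph{Conformal dimension: theory and applications}), which asserts that any Ahlfors $Q$-regular metric space carrying a curve family of positive $Q$-modulus has conformal dimension exactly $Q$. Combined with the upper bound, this yields $\dim_C J(f)=\dim_C J(\MC)=1+\alpha_{d_1,\ldots,d_n}$ for every $f\in\MH$. The only genuinely delicate step is the verification of the positive-modulus condition; everything else is a bookkeeping of the self-similar IFS structure already exposed in \S\ref{subsec-quasisym}.
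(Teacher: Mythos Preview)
Your argument is correct and follows the same overall shape as the paper's proof: both reduce to the standard model $J(\MC)$ via Theorem~\ref{thm:qs-classifi}, use the Ahlfors regularity of the self-similar Cantor set $A(\MC)$, and exploit the product structure coming from the logarithmic coordinate. The difference is only in how the lower bound for the conformal dimension is extracted. The paper invokes, as a black box, the result of Pansu (also in Ha\"{i}ssinsky) that for an $\alpha$-Ahlfors regular space $X$ the product $X\times[0,1]$ has conformal dimension $1+\alpha$, and applies it to $A(\MC)$. You instead unpack what is essentially the proof of that result: you observe that $J(\MC)$ is $(1+\alpha)$-Ahlfors regular, exhibit the family of circle fibers as a curve family of positive $(1+\alpha)$-modulus via a Fubini decomposition, and then appeal to Tyson's theorem. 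The paper's route is shorter (one citation does the job), while yours makes the mechanism behind the lower bound more visible; the two are equivalent in content.
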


\begin{proof}
According to Theorem \ref{thm:qs-classifi}, the Julia set of each $f\in\MH$ is quasisymmetrically equivalent to a standard Cantor circle $J(\MC)$. To prove this proposition we use the following fact (see \cite[Proposition 2.9]{Pan89} or \cite[Proposition 3.7]{Hai09}): if $X$ is a $\lambda$-Ahlfors regular metric space, then $X\times[0,1]$ equipped with the product metric has conformal dimension $1+\lambda$.
Note that the standard Cantor set $A(\MC)$ is an $\alpha$-Ahlfors regular metric space with $\alpha=\alpha_{d_1,\cdots,d_n}$ (see Lemma \ref{lema:Hdim}). Hence the conformal dimension of the Julia set of $f$ is $1+\alpha$.
\end{proof}

Let $J_{\varrho,d_1,\cdots,d_n}$ be the Julia set of $f_{\varrho,d_1,\cdots,d_n}$ for $n\geq 2$.
The following result is an immediate consequence of Proposition \ref{prop:conf-dim-new} and Corollary \ref{cor:typical-ele}.

\begin{cor}\label{cor:conf-dim}
The conformal dimension of $J_{\varrho,d_1,\cdots,d_n}$ is $1+\alpha_{d_1,\cdots,d_n}$.
\end{cor}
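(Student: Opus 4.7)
The plan is to note that this corollary follows almost immediately from the two results cited just above it, so the only real work is in identifying the relevant combination and verifying that $f_{\varrho,d_1,\cdots,d_n}$ is in fact a member of a Cantor circle hyperbolic component.

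First, I would invoke Theorem \ref{thm-QYY} together with Theorem \ref{parameter-restate} to conclude that, for the specific choice of $a_1,\dots,a_{n-1}$ prescribed in Theorem \ref{parameter-restate}, the map $f=f_{\varrho,d_1,\cdots,d_n}$ has Cantor circle Julia set $J_{\varrho,d_1,\cdots,d_n}$. Moreover, by construction $0$ and $\infty$ are super-attracting fixed points (or form a super-attracting $2$-cycle, in case $\varrho=0$ with $n$ odd) and, as noted in the discussion of the four cases (a)--(d) preceding Corollary \ref{cor:typical-ele}, every critical point lies in the Fatou set and is attracted to this super-attracting cycle. Thus $f$ is hyperbolic, so it determines a point of some hyperbolic component $\MH\subset\MM_d$, and $\MH$ is a Cantor circle hyperbolic component.

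Next I would read off the combinatorial data of $\MH$ from the four cases listed between Theorem \ref{parameter-restate} and Corollary \ref{cor:typical-ele}: cases (a), (b), (c) give combinations $(\Io;d_1,\cdots,d_n)$, $(\It;d_1,\cdots,d_n)$, $(\Is;d_1,\cdots,d_n)$ respectively, while case (d) is topologically conjugate on the Julia set to case (a) with the tuple read in reverse order. In every case the unordered multiset of degrees appearing in the combination is exactly $\{d_1,\dots,d_n\}$, and the defining equation \eqref{equ:root} for $\alpha$ is symmetric in its arguments, so $\alpha_{d_1,\cdots,d_n}$ is the same number no matter which of the four cases we are in.

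Finally, I would apply Proposition \ref{prop:conf-dim-new} to the hyperbolic component $\MH$ containing $f$: the conformal dimension of $J(f)$ depends only on $\MH$ (indeed only on the combination $\MC$ of $\MH$), and equals $1+\alpha_{d_1,\cdots,d_n}$. Since $J(f)=J_{\varrho,d_1,\cdots,d_n}$, this gives the claimed identity. There is no real obstacle here; the only point requiring minor care is the bookkeeping in case (d), where one must observe that reversing the order of $(d_1,\dots,d_n)$ does not change the value of $\alpha_{d_1,\dots,d_n}$.
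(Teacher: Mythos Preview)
Your proposal is correct and matches the paper's approach: the paper states that this corollary is an immediate consequence of Proposition~\ref{prop:conf-dim-new} and Corollary~\ref{cor:typical-ele}, and your argument simply unpacks that sentence by verifying that $f_{\varrho,d_1,\cdots,d_n}$ lies in a Cantor circle hyperbolic component whose combination has degree data $(d_1,\cdots,d_n)$ (or its reverse), then applying Proposition~\ref{prop:conf-dim-new}. Your handling of case~(d) via the symmetry of \eqref{equ:root} is the right bookkeeping detail.
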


\begin{rmk}
If $d_i=d_0>n$ for all $1\leq i\leq n$, then $\sum_{i=1}^n 1/d_i=n/d_0<1$ and
\begin{equation}
\dim_C(J_{\varrho,d_1,\cdots,d_n})=1+\frac{\log n}{\log d_0}.
\end{equation}
\end{rmk}

\subsection{Falconer's criterion and its extension to conformal IFS}\label{subsec:Falconer}

In this subsection, we first introduce Falconer's criterion to calculate the upper bounds of the Hausdorff dimensions of the attractors. Then we develop a criterion to calculate the lower bound of the Hausdorff dimension of the attractor.

The following criterion is useful for calculating the upper bound of the Hausdorff dimension of the attractor of an IFS (see \cite[Proposition 9.6]{Fal14}).

\begin{thm}\label{th:Fal-upper}
Let $\mathscr{F}=\{\psi_1,\ldots,\psi_m\}$ be an IFS on the closed set $\Omega\subset \mathbb{R}^n$ satisfying $|\psi_i(x)-\psi_i(y)|\leq c_i|x-y|$, where $0<c_i<1$ and $i=1,\ldots, m$. Then the Hausdorff dimension of the attractor $J$ of $\mathscr{F}$ satisfies: $\dim_H (J)\leq s$, where $s>0$ is the unique number satisfying
\begin{equation}
\quad \sum_{i=1}^m c_i^{s}=1.
\end{equation}
\end{thm}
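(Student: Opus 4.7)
\textbf{Proof plan for Theorem \ref{th:Fal-upper}.} The plan is to use the natural self-similar cover of $J$ obtained by iterating the IFS $k$ times and then let $k \to \infty$ to drive the mesh of the cover to zero, exploiting the fact that the exponent $s$ is precisely the value that makes the total $s$-weighted ``mass'' of such a cover stay bounded independently of $k$.

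First I would record two standard preliminaries: (i) the attractor $J$ is compact (hence $\diam(J) < \infty$), which follows from Hutchinson's argument that $J$ is the unique fixed point of the map $E \mapsto \bigcup_{i=1}^m \psi_i(E)$ on the space of nonempty compact subsets of $\Omega$ equipped with the Hausdorff metric; and (ii) from the self-similarity relation $J = \bigcup_{i=1}^m \psi_i(J)$, by induction on $k \geq 1$ one obtains
\begin{equation*}
J = \bigcup_{\mathbf{i} \in \{1,\ldots,m\}^k} \psi_{\mathbf{i}}(J), \qquad \text{where } \psi_{\mathbf{i}} := \psi_{i_1} \circ \psi_{i_2} \circ \cdots \circ \psi_{i_k}.
\end{equation*}
Since each $\psi_i$ is $c_i$-Lipschitz, composition gives $\diam(\psi_{\mathbf{i}}(J)) \leq c_{i_1} c_{i_2} \cdots c_{i_k} \diam(J)$.

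Next I would estimate the $s$-dimensional Hausdorff pre-measure using this cover. Setting $c_{\max} := \max_i c_i < 1$, for any $\delta > 0$ one can pick $k$ large enough that $c_{\max}^k \diam(J) < \delta$, whence the family $\{\psi_{\mathbf{i}}(J)\}_{\mathbf{i} \in \{1,\ldots,m\}^k}$ is a $\delta$-cover of $J$. Then
\begin{equation*}
\mathcal{H}^s_\delta(J) \;\leq\; \sum_{\mathbf{i} \in \{1,\ldots,m\}^k} \bigl(\diam \psi_{\mathbf{i}}(J)\bigr)^s \;\leq\; \diam(J)^s \sum_{\mathbf{i} \in \{1,\ldots,m\}^k} \prod_{j=1}^k c_{i_j}^s \;=\; \diam(J)^s \Bigl( \sum_{i=1}^m c_i^s \Bigr)^{\!k} = \diam(J)^s,
\end{equation*}
the middle equality being the standard multinomial factorization and the last using $\sum_i c_i^s = 1$. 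Letting $\delta \to 0$ yields $\mathcal{H}^s(J) \leq \diam(J)^s < \infty$, which forces $\dim_H(J) \leq s$ by the definition of Hausdorff dimension.

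There is essentially no hard step here; the only mild subtlety is verifying that the covers obtained by iterating the IFS actually shrink uniformly (handled by the bound $c_{\max}^k \diam(J) \to 0$) and that $J$ itself has finite diameter (handled by compactness of the attractor). Uniqueness of $s$ as the solution to $\sum_i c_i^s = 1$ is immediate because the function $s \mapsto \sum_{i=1}^m c_i^s$ is strictly decreasing and continuous on $(0,\infty)$ with value $m > 1$ at $s = 0^+$ and tending to $0$ as $s \to \infty$.
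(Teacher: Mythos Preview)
Your argument is correct and is exactly the standard natural-covering proof found in Falconer's textbook; the paper itself does not supply a proof of this theorem but simply cites \cite[Proposition 9.6]{Fal14}.
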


For a hyperbolic rational function $f$ with degree $d\geq 2$, $f$ is strictly expanding in a neighborhood of $J(f)$ because the critical orbit is far from the Julia set. In some cases, $f^{-1}$ can be defined and has $d$ inverse branches $g_1$, $\cdots$, $g_d$ which form an IFS whose attractor is exactly the Julia set of $f$.
Therefore, Theorem \ref{th:Fal-upper} can be used to calculate the upper bound of the Hausdorff dimension of the Julia sets of some hyperbolic rational maps.

\medskip
The IFS $\lbrace \psi_1, \ldots, \psi_m \rbrace$ is said to satisfy the \textit{open set condition}, if there is a non-empty bounded open set $V$, such that $\forall\,1\leq i\leq m$, $\psi_i$ is defined on $\overline{V}$ and $\bigsqcup_{i=1}^m \psi_i (V)\subset V$, where ``$\bigsqcup$" denotes disjoint union. Note that $V$ may not contain $J$, but $\overline{V}\supset J$ (see \cite[Theorem 9.1] {Fal14}).

In \cite[Proposition 9.7]{Fal14}, Falconer gave a similar statement to Theorem \ref{th:Fal-upper} to calculate the lower bound of the Hausdorff dimension of the attractor of an IFS. But in the statement an additional condition that the IFS $\mathscr{F}$ should satisfy the ``\textit{strong open set condition}" was added. Under this condition, the attractor of $\mathscr{F}$ must be a Cantor set. So the result of Falconer cannot be used to deal with the case when the Julia sets are not Cantor sets. To overcome this difficulty, we introduce the concept of conformal IFS.

\begin{defi}[conformal IFS]
Let $\mathscr{F}=\{\psi_1,\ldots,\psi_m\}$ be an IFS on the closed set $\Omega\subset\C$. We call that $\mathscr{F}$ is a \textit{conformal IFS}, if there is an open neighborhood $W$ of $\Omega$ and a family of univalent functions $\widetilde{\mathscr{F}}=\{\widetilde {\psi}_1,\ldots,\widetilde{\psi}_m: W\to\C\}$, such that $\widetilde{\psi}_i(W)\subset W$ and $\widetilde{\psi}_i|_{\Omega}=\psi_i$, where $i\in\{1,\cdots,m\}$.
\end{defi}

\begin{rmk}
The definition of conformal IFS here is different from \cite{MU96} and \cite{MU99}, where the conformal IFS has no relation to holomorphic maps in $\C$.
\end{rmk}

We need to use the following distortion theorem on univalent functions (see \cite[Theorem 1.6]{Pom75}).

\begin{thm}[{Koebe distortion theorem}]\label{thm:Koebe}
Let $f:\D\to\C$ be a univalent function satisfying $f(0)=0$ and $f'(0)=1$. For any $z\in\D $, we have
\begin{enumerate}
\item $\tfrac{|z|}{(1+|z|)^2}\leq|f(z)|\leq\tfrac{|z|}{(1-|z|)^2}$ ; and
\item $\tfrac{1-|z|}{(1+|z|)^3}\leq|f'(z)|\leq\tfrac{1+|z|}{(1-|z| )^3}$.
\end{enumerate}
\end{thm}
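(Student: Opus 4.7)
The plan is to derive the Koebe distortion theorem from the classical Bieberbach inequality $|a_2|\le 2$ for the normalized univalent class $S=\{f:\D\to\C \text{ univalent},\,f(0)=0,\,f'(0)=1\}$, by the standard ``change of base point'' trick. First I would establish the area theorem: for any univalent $g(z)=\tfrac{1}{z}+\sum_{n\ge 0}b_nz^n$ on $\D\setminus\{0\}$, one has $\sum_{n\ge 1}n|b_n|^2\le 1$, obtained by computing the area of $\C\setminus g(\D\setminus\{0\})$ via Green's theorem. Applying this to $\sqrt{f(z^2)}\in S$ and reading off the relevant coefficient gives $|a_2|\le 2$ for every $f\in S$.

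Next, given an arbitrary $f\in S$ and a point $w\in\D$, I would form the normalized function
\begin{equation*}
F_w(z)=\frac{f\!\left(\frac{z+w}{1+\bar w z}\right)-f(w)}{(1-|w|^2)f'(w)},
\end{equation*}
which again belongs to $S$. Expanding $F_w(z)=z+A_2(w)z^2+\cdots$ and computing $A_2(w)$ by differentiating at $z=0$, one finds
\begin{equation*}
A_2(w)=\tfrac{1}{2}(1-|w|^2)\,\frac{f''(w)}{f'(w)}-\bar w.
\end{equation*}
The inequality $|A_2(w)|\le 2$ then yields the key pointwise estimate
\begin{equation*}
\left|(1-|w|^2)\frac{f''(w)}{f'(w)}-2\bar w\right|\le 4,\qquad w\in\D.
\end{equation*}

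I would then specialize to the radial direction $w=re^{i\theta}$ with $\theta$ fixed, take real parts, and rewrite the previous estimate as a differential inequality for $\log|f'(w)|$ in the variable $r$. Direct manipulation gives
\begin{equation*}
\frac{2r-4}{1-r^2}\le \frac{\partial}{\partial r}\log|f'(re^{i\theta})|\le \frac{2r+4}{1-r^2}.
\end{equation*}
Integrating from $0$ to $|z|$ (using $|f'(0)|=1$) produces exactly the bounds $\tfrac{1-|z|}{(1+|z|)^3}\le|f'(z)|\le\tfrac{1+|z|}{(1-|z|)^3}$ in (b). For part (a), I would integrate $f'$ along the radial segment from $0$ to $z$, use the upper bound on $|f'|$ to obtain $|f(z)|\le|z|/(1-|z|)^2$, and use the Koebe $1/4$-theorem (itself an immediate corollary of $|a_2|\le 2$, since $1/f$ omits no value of modulus $>4$) together with the lower bound on $|f'|$ to obtain $|f(z)|\ge|z|/(1+|z|)^2$. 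Finally, for a general $f:\D\to\C$ univalent with $f(0)=0$ and $f'(0)=1$ the normalization in $S$ is automatic, so no rescaling is needed.

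The routine work is the area theorem and the algebraic computation of $A_2(w)$; the only genuinely delicate step is the passage from the complex inequality on $A_2(w)$ to the sharp one-sided real inequality on $\partial_r\log|f'|$. That is the main obstacle, because one must carefully extract the real part along the radial direction and verify that the constants are optimal so that integration produces the Koebe functions $z/(1\pm z)^2$ as extremal cases.
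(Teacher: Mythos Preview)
The paper does not prove this statement at all: it merely quotes the Koebe distortion theorem as a classical fact and cites \cite[Theorem 1.6]{Pom75}. Your proposal, on the other hand, reproduces the standard textbook proof (area theorem $\Rightarrow$ Bieberbach's $|a_2|\le 2$ $\Rightarrow$ differential inequality for $\log|f'|$ via the disk automorphism trick $\Rightarrow$ integration), which is essentially the argument one finds in Pommerenke or Duren. So there is nothing to compare on the paper's side; your outline is correct and is exactly the classical route.

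One small remark: your description of the lower bound in (a) is slightly imprecise. Saying ``use the Koebe $1/4$-theorem together with the lower bound on $|f'|$'' hides the actual argument. The clean way is a case split: if $|f(z)|\ge 1/4$ the bound $|z|/(1+|z|)^2\le 1/4$ is trivial; if $|f(z)|<1/4$, the segment $[0,f(z)]$ lies in $f(\D)$ by Koebe $1/4$, so one can pull it back and integrate the lower bound on $|f'|$ along the preimage curve (using that $(1-\rho)/(1+\rho)^3$ is decreasing to handle the arclength correctly). Alternatively, one applies the already-proved upper bound to the transformed function $F_w$ at the point $-w$. Either way this is routine, but the sentence as written does not quite constitute a proof.
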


We use the following criterion to calculate the lower bound of the Hausdorff dimension of the Julia sets of rational functions\footnote{In the applications of Theorem \ref{th:Fal}, sometimes it is necessary to make a coordinate transformation of the dynamical plane. Otherwise one cannot define conformal IFS. For example, in \S\ref{subsec:IFS}, we need to make a logarithmic transformation.} and the proof is inspired by \cite[Theorem 9.3]{Fal14}. Note that the result of \cite[Theorem 9.3]{Fal14} can be only applied to similarities. That means the contracting ratio of the mappings in the IFS is the same at every point. For conformal IFS, although the contracting ratios are different in different places, we can still use Theorem \ref{thm:Koebe} to control the distortion.

\begin{thm}\label{th:Fal}
Let $\mathscr{F}=\{\psi_1,\ldots,\psi_m\}$ be a \emph{conformal} IFS on the closed set $\Omega\subset \C$ satisfying the open set condition. If $|\psi_i(x)-\psi_i(y)|\geq b_i|x-y|$, where $0<b_i<1$ and $i=1,\ldots, m$, then the Hausdorff dimension of the attractor $J$ of $\mathscr{F}$ satisfies: $\dim_H (J)\geq s$, where $s>0$ is the unique number satisfying
\begin{equation}
\quad \sum_{i=1}^m b_i^{s}=1.
\end{equation}
\end{thm}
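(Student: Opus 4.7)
The plan is to use Frostman's mass distribution principle: it suffices to produce a Borel probability measure $\mu$ on $J$ and a constant $C>0$ such that $\mu(B(x,r))\leq Cr^s$ for every $x\in J$ and every sufficiently small $r>0$, which then gives $\dim_H(J)\geq s$. The natural candidate is a Bernoulli measure: set $p_i:=b_i^s$, which satisfies $\sum_{i=1}^m p_i=1$ by the definition of $s$, and let $\mu$ be the push-forward of the Bernoulli product measure with weights $(p_i)$ under the canonical projection $\pi:\{1,\ldots,m\}^{\N}\to J$. For any finite word $\mathbf{i}=(i_1,\ldots,i_k)$ and the composition $\psi_{\mathbf{i}}:=\psi_{i_1}\circ\cdots\circ\psi_{i_k}$ one has $\mu(\psi_{\mathbf{i}}(J))=\prod_{j=1}^k b_{i_j}^s$.

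The role of the \emph{conformal} extensions $\widetilde{\psi}_i$ is to replace the exact scaling of similarities used in \cite[Theorem 9.3]{Fal14} by uniform bounded distortion. First, I would choose a neighborhood $W_0$ of $J$ with $\overline{W}_0\subset W$ and $\bigcup_i\widetilde{\psi}_i(\overline{W}_0)\subset W_0$, which is possible because the $\widetilde{\psi}_i$ are contractions sending $W$ into itself. Since each $\widetilde{\psi}_{\mathbf{i}}:W\to W$ is univalent and $W_0$ stays well inside $W$ under every composition, repeated application of the Koebe distortion theorem (Theorem \ref{thm:Koebe}) on disks compactly contained in $W$ yields a constant $K\geq 1$, independent of $\mathbf{i}$, such that
\begin{equation*}
\frac{|\widetilde{\psi}_{\mathbf{i}}'(z)|}{|\widetilde{\psi}_{\mathbf{i}}'(w)|}\in[K^{-1},K]\quad\textup{for all }z,w\in W_0.
\end{equation*}
Together with the lower Lipschitz bound $|\psi_i(x)-\psi_i(y)|\geq b_i|x-y|$, this forces $\widetilde{\psi}_{\mathbf{i}}(W_0)$ to have bounded eccentricity and $\diam(\widetilde{\psi}_{\mathbf{i}}(W_0))\asymp \prod_{j=1}^k b_{i_j}$.

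Given $x\in J$ and small $r>0$, let $\mathcal{W}_r$ be the finite antichain of minimal words $\mathbf{i}$ with $\diam(\widetilde{\psi}_{\mathbf{i}}(W_0))\leq r$. By the open set condition the sets $\widetilde{\psi}_{\mathbf{i}}(V)$, $\mathbf{i}\in\mathcal{W}_r$, are pairwise disjoint, and by the bounded distortion each of them contains a round disk of radius comparable to $r$. A standard area comparison in $\C$ then bounds, by some $M$ depending only on $K$ and the geometry of $V$, the number of these cylinders meeting $B(x,r)$. Since each such $\mathbf{i}$ contributes at most $\prod_{j=1}^k b_{i_j}^s\leq Cr^s$ to $\mu(B(x,r))$, summing gives $\mu(B(x,r))\leq CMr^s$ and the conclusion follows.

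The main obstacle is ensuring that both the distortion constant $K$ and the multiplicity bound $M$ are truly independent of the word length. This uniformity is exactly what the \emph{conformality} of the extensions — rather than mere bi-Lipschitz behavior — is there to supply: only for holomorphic univalent maps does Koebe's theorem yield distortion control on a fixed neighborhood, and it is precisely this control that keeps the images $\widetilde{\psi}_{\mathbf{i}}(V)$ uniformly close to round disks so that the packing argument survives. The hypothesis that each $\widetilde{\psi}_i$ is defined and univalent on a genuine complex open neighborhood $W$ of $\Omega$ enters at exactly this point; without it, a counterexample among general Lipschitz IFS with the same bounds $b_i$ would be easy to construct.
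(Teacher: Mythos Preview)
Your proposal is correct and follows essentially the same route as the paper's proof: both construct the Bernoulli measure with weights $b_i^s$, use Koebe distortion to obtain uniform bounded distortion for arbitrary compositions $\psi_{\mathbf{i}}$, define a Moran cut set of cylinders at scale $r$, use the open set condition together with the inscribed-disk estimate to bound the number of such cylinders meeting a given ball, and conclude via the mass distribution principle. The only cosmetic difference is bookkeeping of neighborhoods: the paper arranges $\overline{V}\subset W$ by replacing the open set $V$ from the open set condition by a deep iterate $V^k$, whereas you introduce an auxiliary $W_0$; both serve the same purpose of making Koebe's theorem apply uniformly along all words.
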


\begin{proof}
Let $\MI=\{(i_1,i_2,\cdots):1\leq i_j\leq m \text{ and } j\geq 1\}$ be an index set containing the infinite sequences, and $\MI^k=\{(i_1,\cdots,i_k):1\leq i_j\leq m, 1\leq j\leq k\}$ is an index set containing the finite sequences. For a given finite sequence $(i_1,\cdots,i_k)\in\MI^k$, we denote
 $\MI_{i_1,\cdots,i_k}=\{(i_1,\cdots,i_k,q_{k+1},q_{k+2},\cdots):1\leq q_{k+j} \leq m \text{ and } j\geq 1\}$, and $J_{i_1,\cdots, i_k}:=\psi_{i_1}\circ\cdots\circ\psi_{i_k}(J )$. Let
\begin{equation}\label{equ:mu-i}
\mu(\MI_{i_1,\cdots,i_k})=(b_{i_1}\cdots b_{i_k})^s.
\end{equation}
According to $\mu(\MI_{i_1,\cdots,i_k})=\sum_{i=1}^m\mu(\MI_{i_1,\cdots,i_k,i})$, $\mu$ is a mass distribution on $\MI$. It also induces a mass distribution $\widetilde{\mu}$ on the attractor $J$, which is defined as
\begin{equation}\label{equ:mu-tilde}
\widetilde{\mu}(A)=\mu\,\{(i_1,i_2,\cdots):x_{i_1,i_2,\cdots}\in A\}, \quad \forall\, A\subset J,
\end{equation}
where $x_{i_1,i_2,\cdots}=\bigcap_{k=1}^\infty J_{i_1,\cdots, i_k}$ and $\widetilde{\mu}(J)=1$.

Because $\mathscr{F}$ satisfies the open set condition, there is an open set $V$ such that
\begin{equation}
\bigsqcup_{i=1}^m \psi_i (V)\subset V
\text{\quad and \quad} \bigcup_{i=1}^m \psi_i (\overline{V})\subset \overline{V}.
\end{equation}
From \cite[Theorem 9.1]{Fal14} we have $J\subset \overline{V}$. For any $(i_1,\cdots,i_k)\in\MI^k$, we denote $V_{i_1,\cdots, i_k}:=\psi_{i_1}\circ\cdots\circ\psi_{i_k}(V)$. Then $J_{i_1,\cdots, i_k}\subset \overline{V}_{i_1,\cdots, i_k}$.
For any small disk $B$ with radius $r>0$, we will consider those $V_{i_1,\cdots, i_k}$'s whose diameters are comparable to $r$ and whose closures intersect with $J\cap B$, to estimate $\widetilde{\mu}(B)$.

Since $\mathscr{F}$ is a conformal IFS, there is an open set $W$ satisfying $W\supset \Omega\supset J$ and $\forall\,1\leq i\leq m$, the map $\psi_i:\Omega\to\Omega$ can be extended to a univalent function $\widetilde{\psi}_i:W\to W$ on $W$. Without loss of generality we assume that $\overline{V}\subset W$. Otherwise one can use some $k$-th image of $V$:
\begin{equation}
V^k:=\bigcup_{(i_1,\cdots,i_k)\in\MI^k}V_{i_1,\cdots, i_k}
\end{equation}
to replace $V$. Note that $V^k$ also satisfies the open set condition. Since the elements in $\mathscr{F}$ are uniformly contracting maps, there are constants $C>0$ and $0 <\eta<1$ such that for any $k\geq 1$, and $(i_1,\cdots,i_k)\in\MI^k$,
\begin{equation}\label{equ:unif-contra}
\diam(V_{i_1,\cdots,i_k})< C\eta^k.
\end{equation}
Denote $\delta:=\dist(V,\partial W)=\inf\{|z_1-z_2|:z_1\in V, z_2\in\partial W\}>0$. Then there is $k_0\geq 1$ such that $\forall\,(i_1,\cdots,i_{k_0})\in\MI^{k_0}$, we have
\begin{equation}
\diam(V_{i_1,\cdots,i_{k_0}})< \delta/2.
\end{equation}
So there is a constant $a_1$, $a_2>0$ such that $\forall\,(i_1,\cdots,i_{k_0})\in\MI^{k_0}$, there is $y_{i_1,\cdots, i_{k_0}}\in V_{i_1,\cdots,i_{k_0}}$ satisfying
\begin{equation}\label{equ:nested}
B_{a_1}(y_{i_1,\cdots,i_{k_0}})\subset V_{i_1,\cdots,i_{k_0}}\subset B_{a_2}(y_{i_1,\cdots,i_{k_0} })
\subset B_{a_2+\delta/2}(y_{i_1,\cdots,i_{k_0}})\subset W.
\end{equation}

In the following we assume that
\begin{equation}
0<r\leq \min_{(i_1,\cdots,i_{k_0})\in\MI^{k_0}}\diam (V_{i_1,\cdots,i_{k_0}}).
\end{equation}
For any given infinite sequence $(i_1,i_2,\cdots)\in\MI$, there must exist a minimal $k\geq k_0$ such that
\begin{equation}\label{equ:r-dom}
\Big(\min_{1\leq i\leq m} b_i\Big)r \leq \diam (V_{i_1,\cdots,i_k}) \leq r.
\end{equation}
Let $\MQ$ be the collection of all such finite sequences $(i_1,\cdots,i_k)$. By \eqref{equ:unif-contra}, $\MQ$ is a finite set. Since $V_1$, $\cdots$, $ V_m$ are pairwise disjoint, so $V_{i_1,\cdots,i_k,1}$, $\cdots$, $V_{i_1,\cdots,i_k,m}$ are also disjoint. This implies that the elements in the family of open sets $\{V_{i_1,\cdots,i_k}:(i_1,\cdots,i_k)\in\MQ\}$ are disjoint, and
\begin{equation}
J\subset \bigcup_{(i_1,\cdots,i_k)\in\MQ} J_{i_1,\cdots,i_k}\subset \bigcup_{(i_1,\cdots,i_k)\in\MQ} \overline{V }_{i_1,\cdots,i_k}.
\end{equation}

According to \eqref{equ:nested} and Theorem \ref{thm:Koebe}, there are constants $C_1$, $C_2>0$ such that for any $(i_1,\cdots,i_k)\in\MQ$, the open set $V_{i_1,\cdots,i_k}$ contains a disk with radius $C_1\,\diam (V_{i_1,\cdots,i_k})$, and $V_{i_1,\cdots,i_k}$ is contained in a disk with radius $C_2\,\diam (V_{i_1,\cdots,i_k})$. By \eqref{equ:r-dom}, $V_{i_1,\cdots,i_k}$ contains a disk with radius $C_1\, \big(\min_{1\leq i\leq m} b_i\big)r$, and is contained in a disk with radius $C_2\,r$.

Let $\MQ_1=\{(i_1,\cdots,i_k)\in\MQ:B\cap \overline{V}_{i_1,\cdots,i_k}\neq\emptyset\}$. By \cite [Lemma 9.2]{Fal14}, the number of the elements in $\MQ_1$ satisfies
\begin{equation}
|\MQ_1|\leq M:=\frac{(1+2C_2)^2}{C_1^2\,\big(\min_{1\leq i\leq m} b_i\big)^2}.
\end{equation}
If $x_{i_1,i_2,\cdots}\in J\cap B\subset\bigcup_{(j_1,\cdots,j_k)\in\MQ_1} \overline{V}_{j_1,\cdots,j_k}$, then there is $k\geq k_0$ such that $(i_1,\cdots, i_k)\in \MQ_1$. Combining \eqref{equ:mu-i}, \eqref{equ:mu-tilde} and \eqref{equ:r-dom}, we have
\begin{equation}
\begin{split}
\widetilde{\mu}(B)
=&~\widetilde{\mu}(B\cap J)=\mu\{(i_1,i_2,\cdots):x_{i_1,i_2,\cdots}\in B\cap J\} \\
\leq&~\mu\Big(\bigcup_{(i_1,\cdots,i_k)\in\MQ_1} \MI_{i_1,\cdots,i_k}\Big)=\sum_{(i_1,\cdots,i_k)\in\MQ_1}\mu (\MI_{i_1,\cdots,i_k})\\
=&~\sum_{(i_1,\cdots,i_k)\in\MQ_1}(b_{i_1}\cdots b_{i_k})^s\leq \sum_{(i_1,\cdots,i_k)\in\MQ_1}(\diam (V_{i_1,\cdots,i_k}))^s \\
\leq& \sum_{(i_1,\cdots,i_k)\in\MQ_1}r^s\leq r^s M.
\end{split}
\end{equation}
Since any set $E\subset\C$ is contained in a disk with radius $\diam (E)$, we have $\widetilde{\mu}(E)\leq (\diam(E))^s M$. By mass distribution principle (see \cite[p.\,67]{Fal14}), the $s$-dimension Hausdorff measure of $J$ is at least $1/M$. This implies that $\dim_H(J)\geq s$.
\end{proof}

\subsection{Decomposition of the dynamical planes}

We have calculated the conformal dimension of $J_{\varrho,d_1,\cdots,d_n}$ in \S\ref{subsec-conf-dim}. To compute the Hausdorff dimension of $J_{\varrho,d_1,\cdots,d_n}$, we need to decompose the dynamical planes and estimate the expanding factor near the Julia sets.
In the rest of this section, we assume that the parameters $a_1$, $\cdots$, $a_{n-1}$ are \textit{positive} numbers evaluated as in Theorem \ref{parameter-restate}.

For small $\alpha>0$, $\tau>0$ and every $1\leq i\leq n-1$, we define the following numbers:
\begin{equation}\label{equ:const-R}
\begin{split}
R_0=R_0^+=\tau, \quad
& R_i^-=\tau^\alpha a_i, \quad \text{and}\quad\\
R_i^+=\tau^{-\alpha}a_i, \quad
& R_\infty=R_n^-=(2/\tau)^{1/d_n}.
\end{split}
\end{equation}

Recall that the disks $D_0$, $D_\infty$ and the annuli $D_i$ with $1\leq i\leq n-1$, $A_i$ with $1\leq i\leq n$ are defined for\footnote{When $\varrho$, $d_1$, $\cdots$, $d_n$ are given, the parameters $a_1$, $\cdots$, $a_{n-1}$ are functions of variable $\tau>0$.}
\begin{equation}
f_\tau:=f_{\varrho,d_1,\cdots,d_n}.
\end{equation}
For $0<r_1<r_2<\infty$, recall that $\A(r_1,r_2):=\{z\in\C:r_1<|z|<r_2\}$. The following result\footnote{In \cite[Lemma 2.4]{QYY15}, $A_i$ is an annulus containing the critical circle. But in this paper we use $A_i$ to denote the annulus between every two adjacent critical annuli.} has been included in the proof of \cite[Lemma 2.4]{QYY15}.

\begin{lem}\label{lema:annulus}
There exists a small $\alpha>0$ such that if $\tau> 0$ is small enough, then
\begin{equation}
\begin{split}
& \overline{\D}_{R_0}\subset D_0, \quad \overline{\A}(R_i^-,R_i^+)\subset D_i \text{\quad with } 1\leq i\leq n-1, \text{ and}\\
&  \EC\setminus \D_{R_\infty}\subset D_\infty, \quad A_i\subset \A(R_{i-1}^+,R_i^-) \text{\quad with } 1\leq i\leq n.
\end{split}
\end{equation}
Moreover, $f_\tau(\A(R_{i-1}^+,R_i^-))$ contains $\overline{\A}(R_0,R_\infty)$, where $1\leq i\leq n$. All the critical values of $f_\tau$ are contained in $\D_{R_0}\cup (\EC\setminus\overline{\D}_{R_\infty})$.
\end{lem}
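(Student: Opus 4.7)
The plan is to exploit the drastic separation of scales between the parameters $a_1,\ldots,a_{n-1}$ guaranteed by Theorem \ref{parameter-restate}, which forces $|a_j|/|a_i|$ to be a very high positive power of $\tau$ whenever $j<i$. This reduces the lemma to a collection of asymptotic estimates on the moduli of $f_\tau$ evaluated on the test circles $\T_{R_0}$, $\T_{R_i^\pm}$, and $\T_{R_\infty}$.

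First, I would fix a sufficiently small $\alpha>0$ (much smaller than any exponent appearing in the ratios $|a_{j+1}|/|a_j|$ and smaller than $(1-\eta)/3$ times the relevant constants) and estimate each factor $(z^{d_j+d_{j+1}}-a_j^{d_j+d_{j+1}})$ on the circle $\T_{R_i^\pm}$. Concretely, when $|z|=R_i^-=\tau^{\alpha}|a_i|$ one has $|z|\gg|a_j|$ for $j<i$, $|z|\ll|a_i|$, and $|z|\ll|a_j|$ for $j>i$, so the factor behaves like $z^{d_j+d_{j+1}}$, $-a_i^{d_i+d_{i+1}}$, or $-a_j^{d_j+d_{j+1}}$ respectively; an analogous trichotomy holds on $\T_{R_i^+}=\tau^{-\alpha}|a_i|$ with the middle factor now dominated by $z^{d_i+d_{i+1}}$. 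Plugging these dominant terms into $f_\tau$ and keeping careful track of signs in the exponents $(-1)^{n-i-\varrho}$, the modulus $|f_\tau(z)|$ on each circle factors as an explicit positive power of $\tau$; a direct computation (identical to the one performed in \cite[Lemma 2.4]{QYY15} and governed precisely by the choices $u_0,v_0,u_1,v_1$ in Theorem \ref{parameter-restate}) shows that $|f_\tau(z)|$ is either much smaller than $R_0=\tau$ or much larger than $R_\infty=(2/\tau)^{1/d_n}$.

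Once this has been established, the inclusions follow from standard Fatou-component reasoning: since $\T_{R_i^\pm}$ is a Jordan curve mapped by $f_\tau$ into $D_0\cup D_\infty$, it must lie in a single Fatou component which is a pre-image of $D_0$ or $D_\infty$; comparing with the combinatorial position of $\T_{|a_i|}$ (which carries the critical circle from \cite[Lemma 2.4]{QYY15}) identifies this component as $D_i$, yielding $\overline{\A}(R_i^-,R_i^+)\subset D_i$. The inclusions for $\overline{\D}_{R_0}\subset D_0$ and $\EC\setminus\D_{R_\infty}\subset D_\infty$ are obtained in the same way, using that on $\T_{R_0}$ every factor behaves like $-a_j^{d_j+d_{j+1}}$ and on $\T_{R_\infty}$ every factor behaves like $z^{d_j+d_{j+1}}$, reducing $f_\tau$ to a monomial of degree $\pm d_1$ (resp.\ $\pm d_n$). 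The containment $A_i\subset\A(R_{i-1}^+,R_i^-)$ then follows by complementation.

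For the surjectivity statement $f_\tau(\A(R_{i-1}^+,R_i^-))\supset\overline{\A}(R_0,R_\infty)$, I would combine the previous boundary estimates with the argument principle: the inner and outer boundaries of $\A(R_{i-1}^+,R_i^-)$ are mapped by $f_\tau$ into $\overline{\D}_{R_0}$ and $\EC\setminus\D_{R_\infty}$ (in some order depending on parity), so by counting pre-images of a test point in $\overline{\A}(R_0,R_\infty)$ via $\deg(f_\tau|_{A_i})=d_i$ one obtains the claimed covering. Finally, the statement about critical values is immediate: the critical points of $f_\tau$ other than $0,\infty$ satisfy $z^{d_i+d_{i+1}}=c_i\,a_i^{d_i+d_{i+1}}$ for explicit constants $c_i$ determined by the logarithmic-derivative equation, and evaluating $f_\tau$ at such a point gives a product whose modulus is again either $\ll\tau$ or $\gg (2/\tau)^{1/d_n}$ by the same scale analysis.

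The main obstacle is purely bookkeeping: verifying that a single $\alpha>0$ can be chosen uniformly so that every dominant-term estimate above is valid simultaneously for all $\tau$ sufficiently small. This reduces to showing that $\alpha$ can be taken smaller than the minimum of the exponents appearing in $|a_j|/|a_{j+1}|$ (which are explicit positive constants depending only on $d_1,\ldots,d_n$ and $\eta$ via Theorem \ref{parameter-restate}), so that no cross-scale term ever competes with the designated dominant one. Once $\alpha$ is fixed this way, taking $\tau$ small enough is automatic, and the argument goes through exactly as in \cite[Lemma 2.4]{QYY15}.
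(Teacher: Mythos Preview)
Your proposal is correct and matches the paper's approach: the paper gives no independent proof of this lemma but simply records that it ``has been included in the proof of \cite[Lemma 2.4]{QYY15}'', and your sketch is exactly the scale-separation argument carried out there. Your identification of the key bookkeeping step (choosing $\alpha$ smaller than the exponents governing the ratios $|a_j|/|a_{j+1}|$ from Theorem~\ref{parameter-restate}) is the only nontrivial point, and it is handled in that reference just as you describe.
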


\subsection{Logarithmic coordinates and the proof}\label{subsec:IFS}

Note that $f_\tau$ is a real rational map and $f_\tau^{-1}(\overline{\A}(R_0,R_\infty)\setminus\R^+)$ consists of $d_i$ components in $\A(R_{i-1}^+,R_i^-)\setminus\R^+$ for every $1\leq i\leq n$. We label the closure of these $d_i$ components of $f_\tau^{-1}(\overline{\A}(R_0,R_\infty)\setminus\R^+)$ in $\A(R_{i-1}^+,R_i^-)\setminus\R^+$ counterclockwise as $S_{i,1}$, $S_{i,2}$, $\cdots$, $S_{i,d_i}$, such that $S_{i,1}$ lies above of $\R^+$, $S_{i,d_i}$ lies below of $\R^+$, $S_{i,1}\cap\R^+\neq\emptyset$ and $S_{i,d_i}\cap\R^+\neq\emptyset$.

\medskip
Let $\Xi_i=\{(i,\ell):\ell=1,2,\cdots,d_i\}$ for $1\leq i\leq n$ and
\begin{equation}
\Xi:=\Xi_1\cup\Xi_2\cup\cdots\cup\Xi_n
\end{equation}
be the index sets.
We denote $S:=\overline{\A(R_0,R_\infty)\setminus\R^+}$ and treat every $z\in[R_0,R_\infty]$ as two different points in $S$, i.e., $S$ is seen as a simply connected closed domain.

Based on the convenience introduced above, one can see that $f_\tau|_{S_\xi}:S_\xi\to S$ is a homeomorphism for every $\xi\in\Xi$. Let $\varphi_\xi:S\to S_\xi$ be the inverse of $f_\tau|_{S_\xi}$. Then every $\varphi_\xi$ is a contracting mapping and $\{\varphi_\xi:\xi\in\Xi\}$ forms an iterated function system. The attractor of $\{\varphi_\xi:\xi\in\Xi\}$ is exactly the Julia set $J_\tau$ of $f_\tau$.

\medskip
To compute the Hausdorff dimension of $J_\tau$, we need Theorems \ref{th:Fal-upper} and \ref{th:Fal}.
From Lemma \ref{lema:annulus} one can see that the IFS $\{\varphi_\xi:\xi\in\Xi\}$ satisfies the open condition. In order to estimate the contracting constants, we lift the map $f_\tau$ (and the IFS) to the logarithmic coordinate.

Note that $S:=\overline{\A(R_0,R_\infty)\setminus \R^+}$ is seen as a simply connected closed domain. We lift $S$ and $S_\xi$ under
\begin{equation}
\sigma:Z\mapsto z=e^Z
\end{equation}
to obtain $\widetilde{S}$ and $\widetilde{S}_\xi$ such that $\widetilde{S}=\{Z:0\leq \im Z\leq 2\pi \text{ and } e^Z\in S\}$ and $\widetilde{S}_\xi=\{Z:0\leq \im Z\leq 2\pi \text{ and } e^Z\in S_\xi\}$. This lift is unique determined and $\widetilde{S}_{i,1}\cap\R^+\neq\emptyset$ for all $1\leq i\leq n$.
For every $\xi\in\Xi$, we define $F_{\xi,\tau}(Z)$ on $\widetilde{S}_\xi$ by
\begin{equation}\label{equ:F-epsilon}
\begin{split}
F_{\xi,\tau}(Z):=&~\sigma^{-1}\circ f_\tau\circ\sigma(Z)=\log f_\tau(e^Z)\\
=&~(-1)^{n-\varrho}\left(d_1 Z+\sum_{i=1}^{n-1}(-1)^i\log(e^{(d_i+d_{i+1})Z}-a_i^{d_i+d_{i+1}})\right).
\end{split}
\end{equation}
Here $\sigma^{-1}$ is a continuous branch of the logarithm which maps $S$ and $\widetilde{S}$.
Then $F_{\xi,\tau}$ is a homeomorphism from $\widetilde{S}_\xi$ to $\widetilde{S}$.

Let $\Phi_\xi:\widetilde{S}\to\widetilde{S}_\xi$ be the inverse of $F_{\xi,\tau}$. Then $\{\Phi_\xi:\xi\in\Xi\}$ forms an IFS defined on $\widetilde{S}$ which is conjugated by $\log$ to the IFS $\{\varphi_\xi:\xi\in\Xi\}$. The attractor of $\{\Phi_\xi:\xi\in\Xi\}$ is $\widetilde{J}_\tau:=\{Z:0\leq \im Z\leq 2\pi \text{ and } e^Z\in J_\tau\}$. Hence we have
\begin{equation}\label{equ:Haus-equl}
\dim_H(J_\tau)=\dim_H(\widetilde{J}_\tau).
\end{equation}

\begin{proof}[{Proof of the first part of Theorem \ref{thm:hdim-CC}}]
We first estimate the asymptotic behavior of $F_{\xi,\tau}'(Z)$ as $\tau\to 0$. By \eqref{equ:F-epsilon} we have
\begin{equation}\label{equ:F-sigma-d}
F_{\xi,\tau}'(Z)=(-1)^{n-\varrho}\left(d_1+\sum_{i=1}^{n-1}(-1)^i\frac{(d_i+d_{i+1})z^{d_i+d_{i+1}}}{z^{d_i+d_{i+1}}-a_i^{d_i+d_{i+1}}}\right),
\end{equation}
where $Z\in \widetilde{S}_\xi$ and $z=e^Z\in S_\xi$.

By Lemma \ref{lema:annulus}, if $\tau>0$ is sufficiently small, we have
\begin{equation}
\begin{split}
0<R_0=R_0^+&~\ll R_1^-\ll a_1 \ll R_1^+\ll \cdots\\
&~\ll R_{n-1}^-\ll a_{n-1}\ll R_{n-1}^+\ll 1\ll R_n^-=R_\infty
\end{split}
\end{equation}
and
\begin{equation}\label{equ:limit-a}
\lim_{\tau\to 0}\frac{R_i^-}{a_i}=\lim_{\tau\to 0}\frac{a_i}{R_i^+}=0, \text{ where }1\leq i\leq n-1.
\end{equation}
If $\xi=(i,\ell)$ with $1\leq i\leq n$ and $1\leq \ell\leq d_i$, then $z\in S_\xi$ implies
\begin{equation}
R_{i-1}^+<|z|<R_i^-.
\end{equation}
Therefore, by \eqref{equ:F-sigma-d} and \eqref{equ:limit-a}, if $z\in S_{i,\ell}$ we have
\begin{equation}
\widetilde{c}_i(\tau)\leq \Big|\big|F_{\xi,\tau}'(Z)|-\big|d_1+\sum_{j=1}^{i-1}(-1)^j (d_j+d_{j+1})\big|\Big|=\Big||F_{\xi,\tau}'(z)|-d_i\Big|\leq \widetilde{b}_i(\tau),
\end{equation}
where $\widetilde{b}_i(\tau)$ and $\widetilde{c}_i(\tau)$ are positive numbers depending on $\tau$ (also on $z$) which satisfy
\begin{equation}\label{equ:limit-b-c}
\lim_{\tau\to 0} \widetilde{b}_i(\tau)=\lim_{\tau\to 0} \widetilde{c}_i(\tau)=0
\end{equation}
uniformly on $S_{i,\ell}$, where $1\leq i\leq n$ and $1\leq \ell\leq d_i$. Hence if $\xi=(i,\ell)$ with $1\leq\ell\leq d_i$ we have
\begin{equation}
\widehat{c}_i(\tau)\leq |F_{\xi,\tau}'(Z)|\leq \widehat{b}_i(\tau),
\end{equation}
where
\begin{equation}
\widehat{b}_i(\tau):=d_i+\widetilde{b}_i(\tau) \quad \text{and}\quad \widehat{c}_i(\tau):=d_i-\widetilde{c}_i(\tau).
\end{equation}

Note that each $\Phi_\xi$ can be extended to be a univalent function defined in a neighborhood of $\widetilde{S}$ and $\Phi_\xi(\widetilde{S})= \widetilde{S}_\xi$. It follows that $\{\Phi_\xi:\xi\in\Xi\}$ forms a conformal IFS defined in a neighborhood of $\widetilde{S}$ and satisfies the open set condition.

Set $b_i=1/\widehat{b}_i(\tau)$ and $c_i=1/\widehat{c}_i(\tau)$, where $1\leq i\leq n$. By \eqref{equ:Haus-equl}, Theorems \ref{th:Fal-upper} and \ref{th:Fal}, we have
\begin{equation}
\beta_-\leq\dim_H(\widetilde{J}_\tau)=\dim_H(J_\tau)\leq \beta_+,
\end{equation}
where $\beta_-$ and $\beta_+$ satisfy
\begin{equation}
\sum_{i=1}^n d_i b_i^{\beta_-}=1 \quad\text{and}\quad \sum_{i=1}^n d_i c_i^{\beta_+}=1.
\end{equation}

By the definition of $b_i$ and $c_i$, we have
\begin{equation}
\sum_{i=1}^n \frac{d_i}{(d_i+\widetilde{b}_i(\tau))^{\beta_-}}=1 \quad\text{and}\quad
\sum_{i=1}^n \frac{d_i}{(d_i-\widetilde{c}_i(\tau))^{\beta_+}}=1.
\end{equation}
Let $\alpha_{d_1,\cdots,d_n}\in(0,1)$ be the number determined by \eqref{equ:root}. From the above equations, we see that
\begin{equation}
\lim_{\tau\to 0}\beta_-=1+\alpha_{d_1,\cdots,d_n}=\lim_{\tau\to 0}\beta_+.
\end{equation}
This implies that $\lim\limits_{\tau\to 0}\dim_H(J_\tau)=1+\alpha_{d_1,\cdots,d_n}$. Combining Proposition \ref{prop:conf-dim-new}, this completes the proof of the first assertion of Theorem \ref{thm:hdim-CC}.
\end{proof}

\section{Hausdorff dimension of Cantor circles: The supremum}\label{sec:dim-sup}

In this section we study the supremum of the Hausdorff dimensions of the Cantor circle Julia sets. The idea is to perturb some parabolic rational maps with Cantor circle Julia sets to the hyperbolic ones and then use Shishikura's result about parabolic bifurcations. In fact, we will prove the second part of Theorem \ref{thm:hdim-CC} for more general hyperbolic components.

\medskip
The following theorem is a weak version of \cite[Theorem 2]{Shi98}.

\begin{thm}[{Shishikura}]\label{thm-shishi}
Suppose that a rational map $f_0$ of degree $d\geq 2$ has a parabolic fixed point $z_0$ with multiplier $1$ and that the immediate parabolic basin of $z_0$ contains only one critical point of $f_0$. Then for any $\varepsilon>0$ and $b>0$, there exist a neighborhood $\MN$ of $f_0$ in the space of rational maps of degree $d$, a neighborhood $V$ of $z_0$ in $\EC$, positive integers $N_1$ and $N_2$ such that if $f\in\MN$, and if $f$ has a fixed point in $V$ with multiplier $e^{2\pi\ii\alpha}$, where
\begin{equation}\label{equ:Shishi-multi}
\alpha=\pm\frac{1}{a_1\pm\frac{1}{a_2+\beta}}
\end{equation}
with integers $a_1\geq N_1$, $a_2\geq N_2$ and $\beta\in\C$, $0\leq\re\,\beta<1$, $|\im\beta|\leq b$, then
\begin{equation}
\dim_H(J(f))>2-\varepsilon.
\end{equation}
\end{thm}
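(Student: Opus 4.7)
The plan is to prove this by parabolic implosion applied iteratively, in a way that mirrors the two-level continued-fraction structure of $\alpha$ in \eqref{equ:Shishi-multi}. The ultimate goal is to exhibit, inside $J(f)$, an invariant hyperbolic expanding Cantor set of Hausdorff dimension arbitrarily close to $2$ whenever $a_1,a_2$ are sufficiently large. Setup: after a holomorphic change of coordinate centered at $z_0$, one may write $f_0(z)=z+z^2+O(z^3)$; the hypothesis that the immediate parabolic basin of $z_0$ contains exactly one critical point forces a single petal of multiplicity one, so there are attracting and repelling Fatou coordinates $\Phi_{\mathrm{att}},\Phi_{\mathrm{rep}}$ conjugating $f_0$ to $w\mapsto w+1$ on opposing petals, and the induced horn (Lavaurs) map $\mathcal{E}_{f_0}:=\Phi_{\mathrm{att}}\circ\Phi_{\mathrm{rep}}^{-1}$ descends to a holomorphic endomorphism $\mathcal{E}$ of the quotient cylinder.

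The first step is the classical implosion principle: for any sequence $f_n\to f_0$ in $\MN$ with multipliers $e^{2\pi\ii\alpha_n}$ satisfying $\pm 1/\alpha_n-a_1\to\sigma\in\C$, the iterates $f_n^{\circ a_1}$ converge on a definite piece of phase space to the ``translated horn map'' $\Phi_{\mathrm{rep}}^{-1}\circ T_\sigma\circ\Phi_{\mathrm{att}}$, where $T_\sigma(w)=w+\sigma$. For $\alpha$ of the form in \eqref{equ:Shishi-multi} one has $\pm 1/\alpha-a_1=\pm 1/(a_2+\beta)$, which is both small (so the implosion applies with $N_1$ chosen to force $a_1$ large) and itself a parabolic-bifurcation rotation number with integer part $a_2$ and phase $\beta$. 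I would then apply the implosion scheme a second time to the horn map $\mathcal{E}$: the assumption of a single critical point in the immediate basin keeps $\mathcal{E}$ globally defined on the quotient cylinder with a controlled critical structure, and arranges for it to carry a parabolic (or renormalizable) fixed point. Iterating $\mathcal{E}$ a further $a_2$ times while $a_2\to\infty$ produces a second Lavaurs limit $g_\beta$, and composing the two limits identifies the action of $f^{\circ a_1 a_2}$ on a definite disc inside $V$ with $g_\beta$ up to an error that vanishes as $a_1,a_2\to\infty$.

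The final step is to extract a hyperbolic subset of $J(f)$ of large dimension. For $a_1,a_2$ large and $\beta$ in any fixed bounded vertical strip, $g_\beta$ admits an invariant expanding Cantor set $K_0\subset J(g_\beta)$ whose Hausdorff dimension can be pushed to $2$ by the standard device of selecting arbitrarily many inverse branches of a high iterate near the second parabolic (weakly repelling) point; Bowen's pressure formula $P(-t_\ast\log|g_\beta'|)=0$ then gives $\dim_H K_0>2-\varepsilon/2$. Because hyperbolic invariant sets persist under holomorphic perturbation and the associated holomorphic motion is quasiconformal on a uniform scale, $K_0$ is tracked by an invariant hyperbolic set $K\subset J(f)$ whose dimension differs from that of $K_0$ by less than $\varepsilon/2$, yielding $\dim_H J(f)\geq\dim_H K>2-\varepsilon$. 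The main obstacle is the second application of parabolic implosion: one must verify that $\mathcal{E}$ carries a parabolic fixed point of admissible type, and one must control the errors in the double limit uniformly so that the Bowen dimension estimate survives the transfer from $g_\beta$ back to $f$. The single-critical-point hypothesis is precisely what keeps the horn map simple enough on the quotient cylinder for this two-level implosion to be set up, and also dictates the size of the neighborhoods $\MN,V$ and the thresholds $N_1,N_2$.
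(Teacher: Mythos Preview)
The paper does not prove this theorem at all: it is quoted as a weak version of \cite[Theorem~2]{Shi98} and used as a black box in the proof of Theorem~\ref{thm-Hdim-sup}. So there is no ``paper's own proof'' to compare against.

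That said, your outline is broadly in the spirit of Shishikura's actual argument: two levels of parabolic implosion matching the two-level continued-fraction form of $\alpha$, followed by extraction of a hyperbolic expanding subset of large dimension. A few points in your sketch are imprecise or would not go through as stated. First, the horn map $\mathcal{E}$ is not globally defined on the quotient cylinder merely because the immediate basin contains a single critical point; what that hypothesis gives is control over the critical structure of $\mathcal{E}$ near the ends (and in particular guarantees that near one end $\mathcal{E}$ looks like $z\mapsto z^2$ in suitable coordinates). Second, the assertion that $\mathcal{E}$ ``carries a parabolic fixed point of admissible type'' is not automatic and is in fact one of the key technical points of Shishikura's proof: one has to verify that the end of the cylinder corresponding to the repelling direction furnishes a parabolic-type fixed point for $\mathcal{E}$ so that a second implosion can be performed. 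Third, the construction of the large-dimension hyperbolic set in Shishikura's paper is more concrete than an appeal to Bowen's formula: one builds many nearly-conformal inverse branches on a disc, uses the bounded geometry coming from the Fatou coordinates to control distortion, and estimates dimension directly. Your pressure-function shortcut would ultimately require the same distortion control to justify the uniformity in $\beta$ and in the double limit $a_1,a_2\to\infty$, which is exactly the ``main obstacle'' you flag.
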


For the shape of the region for $\alpha$ satisfying \eqref{equ:Shishi-multi}, see \cite[Figure 3]{Shi98}.

\begin{thm}\label{thm-Hdim-sup}
Let $\MH$ be a hyperbolic component in $\MM_d$ with $d\geq 2$. Suppose that every $f\in\MH$ has a simply connected periodic Fatou component whose closure is disjoint with any other Fatou components. Then
\begin{equation}
\sup_{f\in\MH}\dim_H (J(f))=2.
\end{equation}
\end{thm}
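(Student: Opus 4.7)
The plan is to produce, for every $\varepsilon>0$, a map $f\in\MH$ with $\dim_H J(f)>2-\varepsilon$; combined with the trivial bound $\dim_H J(f)\leq 2$, this gives $\sup_{f\in\MH}\dim_H J(f)=2$. The strategy mirrors the sketch in the introduction: reach a parabolic map on $\partial\MH$ with the right critical data, apply Shishikura's Theorem \ref{thm-shishi}, and arrange that the perturbations it furnishes actually lie inside $\MH$.

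I would first fix $f\in\MH$ with distinguished simply connected periodic Fatou component $U$ of period $p$, and tune the multiplier $\lambda$ of the attracting $p$-cycle in $U$ radially toward $1$ along a holomorphic deformation inside $\MH$. Because $\overline{U}$ is disjoint from every other Fatou component, the deformation preserves the Fatou-set combinatorics all the way to the boundary, and the limit $f_0\in\partial\MH$ has a parabolic $p$-cycle of multiplier $1$. The limiting Fatou component $U_0$ remains simply connected, has closure disjoint from the other Fatou components of $f_0$, and carries the same critical orbits as $U$. A preliminary quasiconformal surgery supported in the cycle of $U$ (whose support can be localized precisely because $\overline{U}$ is isolated) allows me to assume, without leaving $\MH$, that the $\deg(f^p|_U)-1$ critical points produced by Riemann--Hurwitz are coalesced into a single critical point, so that the immediate parabolic basin of $z_0$ for $f_0^p$ contains exactly one critical point of $f_0^p$.

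Next, Theorem \ref{thm-shishi}, applied to $f_0$ at its parabolic $p$-cycle (by passing to the iterate $f_0^p$ and using $J(g)=J(g^p)$ for any rational $g$), furnishes a neighborhood $\MN$ of $f_0$ and integers $N_1,N_2$ such that every $g\in\MN$ whose $p$-cycle has multiplier $e^{2\pi\ii\alpha}$ with $\alpha=\pm 1/(a_1\pm 1/(a_2+\beta))$, $a_1\geq N_1$, $a_2\geq N_2$, $0\leq\re\beta<1$, $|\im\beta|\leq b$, satisfies $\dim_H J(g)>2-\varepsilon$. Setting $\beta=\ii c$ with $c>0$ and choosing the signs so that $\im\alpha>0$, a direct computation gives $|e^{2\pi\ii\alpha}|<1$, so the corresponding $p$-cycle of $g$ is attracting. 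Such $g$ inherit the Fatou-set combinatorics of $f_0$ (and hence of $f$) by Ma\~n\'e--Sad--Sullivan stability, so $g\in\MH$. Letting $\varepsilon\to 0$ concludes the argument.

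The main obstacle is the critical-point bookkeeping together with the hyperbolic-component bookkeeping. One must justify that the preparatory quasiconformal surgery merging the several critical points in $U$ into a single one can be performed without leaving $\MH$; this uses the disjointness hypothesis to localize the Beltrami coefficient in a neighborhood of the cycle of $U$, separated from all other postcritical dynamics. One must also verify that the attracting perturbations furnished by Shishikura lie in $\MH$ rather than a neighboring hyperbolic component, which again rests on the disjointness: it guarantees that the simply connected periodic component persists through the perturbation, giving the perturbed map the same Fatou-set combinatorial model as $f$.
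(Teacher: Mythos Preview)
Your overall strategy matches the paper's: surger to a single critical point in the periodic cycle, push the multiplier to $1$ to reach a parabolic boundary map, then invoke Theorem~\ref{thm-shishi} on the $p$-th iterate. Two steps in your outline need firmer ground, and the paper handles them differently.

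First, the existence of the parabolic limit. You assert that the radial deformation converges to some $f_0\in\partial\MH$ whose parabolic $p$-cycle has one petal and one critical point in its immediate basin. This is the delicate step: as the multiplier tends to $1$ the deformed maps could in principle diverge in $\Rat_d$ (degenerate to lower degree). The paper invokes the pinching theorem of Cui--Tan to guarantee convergence to a geometrically finite limit with exactly these properties; the disjointness hypothesis on $\overline{U}$ is what forces a \emph{single} petal at the parabolic point, which is needed for Shishikura's hypothesis.

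Second, your argument that the Shishikura perturbations lie in $\MH$. Invoking Ma\~{n}\'{e}--Sad--Sullivan here is not the right tool: MSS describes conjugacy \emph{within} a $J$-stable component, but $f_0$ is parabolic, hence $J$-unstable, and nearby attracting perturbations can land in satellite hyperbolic components rather than in $\MH$. ``Same Fatou combinatorics implies same component'' is also not available in this generality (in this paper it is only proved for Cantor circle components, Theorem~\ref{thm:conj}). The paper sidesteps the issue by never leaving $\MH$: it realizes the required multipliers by moving along horocycles at $1$ via quasiconformal deformations supported in the attracting cycle (following McMullen). These deformations stay in $\MH$ by construction, the corresponding maps approach the parabolic limit, and horocyclic multipliers eventually enter the admissible region \eqref{equ:Shishi-multi}.
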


\begin{proof}
By the assumption, every $f\in\MH$ has a cycle of attracting periodic Fatou components $U_0\to U_1\to\cdots\to U_{p-1}\to U_0$ which are all simply connected, where $p\geq 1$. Moreover, $\overline{U}_i\cap \overline{U}_j=\emptyset$ for any $i\neq j$. By performing a quasiconformal surgery, it is easy to see that $\MH$ contains at least one map $f_0$ such that $U_0\to U_1\to\cdots\to U_{p-1}\to U_0$ is a cycle of super-attracting basins of $f_0$ and $f_0^{\circ p}:U_0\to U_0$ contains exactly one critical point $0$ (counted without multiplicity). By a standard quasiconformal surgery \cite[Chap.\,4]{BF14}, one can construct a continuous path $(f_t:\EC\to\EC)_{t\in[0,1)}$ of hyperbolic rational maps in $\MH$ such that $f_t^{\circ p}$ has a geometrically attracting fixed point $0$ with multiplier $t$ whose immediate attracting basin $U_0^t$ contains exactly one critical point (counted without multiplicity).

According to \cite{CT18}, $(f_t:\EC\to\EC)_{t\in[0,1)}$ can be chosen as a pinching path and the limit $f_1:=\lim_{t\to1^-}f_t$ exists, where $f_1$ is a parabolic rational map having the following properties:  
\begin{enumerate}
\item $f_1^{\circ p}$ has a parabolic fixed point at $0$ with multiplier $1$ whose immediate parabolic basin $U_0^1$ contains exactly one critical point\footnote{Note that $f_1^{\circ p}$ has exactly one petal (contained in $U_0^1$) at the parabolic fixed point $0$. This is the reason why we assumed that each $f\in\MH$ has a simply connected periodic Fatou component whose closure is disjoint with any other Fatou components.};
\item $f_1^{\circ p}|_{J(f_1)}$ is topologically conjugate to $f_t^{\circ p}|_{J(f_t)}$ for all $t\in[0,1)$ (actually topologically conjugate to $f^{\circ p}|_{J(f)}$ for all $f\in\MH$); and
\item The Julia set of $f_1$ is homeomorphic to the Julia set of $f_0$.
\end{enumerate}

By Theorem \ref{thm-shishi}, for any $\varepsilon>0$, there exist a small neighborhood $\MN_\varepsilon$ of $f_1^{\circ p}$ in the moduli space $\MM_{d'}$ with $d'=d^p$ and a subset $\MN_\varepsilon'\subset\MN_\varepsilon$, such that every $f\in\MN_\varepsilon'\cap\MH$ has a cycle of geometrically attracting periodic point with multiplier satisfying\footnote{One can perturb $f_1$ along \textit{horocycles} to obtain the required multipliers, see \cite[\S 12]{McM00a}.} \eqref{equ:Shishi-multi}, and the Hausdorff dimension of $J(f)$ is at least $2-\varepsilon$. Therefore we have $\sup_{f\in\MH}\dim_H (J(f))=2$.

If $\MH$ is a Cantor circle hyperbolic component, then the closures of any two different Fatou components of $f\in\MH$ are disjoint. Moreover, every $f\in\MH$ has a cycle of simply connected periodic Fatou components. This ends the proof of Theorem \ref{thm-Hdim-sup} and the second part of Theorem \ref{thm:hdim-CC}.
\end{proof}

\begin{rmk}
(i) The Fatou components in Theorem \ref{thm-Hdim-sup} could be infinitely connected. Indeed, the maps in this theorem are only required to contain one simply connected attracting basin but the other attracting basins may be infinitely connected.

\medskip
(ii) Theorem \ref{thm-Hdim-sup} can be used to study the Hausdorff dimension of some other kind of Julia sets. For example, for any Sierpi\'{n}ski carpet hyperbolic component $\MH$ (i.e., every map in $\MH$ has a Sierpi\'{n}ski carpet Julia set), one has $\sup_{f\in\MH}\dim_H (J(f))=2$ (see \cite{FY20}).
\end{rmk}

\begin{proof}[{Proof of Theorem \ref{thm:Hdim-sharp}}]
Let $\MH$ be a Cantor circle hyperbolic component in $\MM_{2d}$ such that each $f\in\MH$ has the combination $(\Io;d,d)\in\mathscr{C}$, where $d\geq 3$. By Theorem \ref{thm:hdim-CC}, we have
\begin{equation}
\inf_{f\in\MH}\dim_H (J(f))=1+\frac{\log 2}{\log d} \text{\quad and\quad}\sup_{f\in\MH}\dim_H (J(f))=2.
\end{equation}
Note that $f\mapsto \dim_H (J(f))$ is a continuous function as $f$ moves in $\MH$ (see \cite{Rue82}). For each $s\in(1+\log 2/\log d, 2)$, there exists a map $f\in\MH$ such that $\dim_H (J(f))=s$. Since $d$ can be chosen arbitrarily large, the second statement of Theorem \ref{thm:Hdim-sharp} follows.

Let $f$ be a rational map with a Cantor circle Julia set $J(f)$. According to \cite{QYY15}, $f$ is hyperbolic or parabolic. By \cite{Urb94} or \cite{Yin00}, we have $\dim_H(J(f))<2$. If $f$ is hyperbolic, then $f$ is contained in some Cantor circle hyperbolic component and we have $\dim_H(J(f))\geq \dim_C(J(f))>1$ by Proposition \ref{prop:conf-dim-new}. Suppose that $f$ is parabolic. By the continuity of the Hausdorff dimension of the Julia sets (see \cite[Theorem 11.2]{McM00a}), there exists a sequence of hyperbolic rational maps $f_n$ such that $\dim_H(J(f))=\lim_{n\to\infty}\dim_H(J(f_n))$, where $\{f_n\}_{n\in\N}$ are contained in the same Cantor circle hyperbolic component. This means that $\dim_H(J(f))\geq \inf_{n\in\N}\dim_H(J(f_n))\geq \dim_C(J(f_n))>1$. Therefore, we have $1<\dim_H(J(f))<2$ if $J(f)$ is a Cantor set of circles.
\end{proof}




\appendix
\section{Homotopic classes from annuli to disks}\label{sec:appendix}

It is not difficult to show that all the topological branched covering maps from a Jordan disk to another Jordan disk with the same boundary values are in the same homotopic class.
In this section we focus our attention on classifying the homotopic classes of such maps defined from an annulus to a Jordan disk.


Recall that $\A_r=\{z\in\C:r<|z|<1\}$ and $\T_r=\{z\in\C:|z|=r\}$, where $0<r<1$. In particular, $\T=\T_1$ is the unit circle.
Let $p$, $q\geq 1$ be two integers. We denote $\omega_j=e^{2\pi\ii\frac{(j-1)q}{p+q}}$ for $1\leq j\leq p+q$.

\begin{lem}[{see Figure \ref{Fig:cpreimagea}}]\label{lem:twist}
Let $f:\overline{\A}_r\to \overline{\D}$ be a continuous map satisfying
\begin{itemize}
\item $f:\A_r\to \D$ is a branched covering map with degree $p+q$;
\item $\deg(f|_{\T_r})=p$, $\deg(f|_{\T})=q$; and
\item $f$ has $p+q$ different critical points $CP=\{c_j:1\leq j\leq p+q\}$ in $\A_r$, and $p+q$ different critical values $CV=\{v_j=f(c_j):1\leq j\leq p+q\}$ in $\D$.
\end{itemize}
Then for any given $b_1\in f^{-1}(1)\cap\T_r$, there are $p+q$ smooth arcs $\{\gamma_j:1\leq j\leq p+q\}$ such that
\begin{enumerate}
\item $\gamma_j$ connects $v_j$ with $\omega_j$;
\item $\gamma_j\setminus\{\omega_j\}\subset\D$ and $\gamma_j\cap\gamma_k=\emptyset$ for any $1\leq j\neq k\leq p+q$;
\item the connected component of $f^{-1}(\gamma_1)$ containing $c_1$ passes through $b_1$; and
\item for any connected component $U'$ of $f^{-1}(\D\setminus\bigcup_{j=1}^{p+q}\gamma_j)$,
\begin{equation}
\big(f^{-1}(\cup_{j=1}^{p+q}\gamma_j\cup\T)\cap\overline{U'}\big)\setminus\T_r \text{\quad and\quad}
\big(f^{-1}(\cup_{j=1}^{p+q}\gamma_j\cup\T)\cap\overline{U'}\big)\setminus\T
\end{equation}
have $p$ and $q$ connected components respectively.
\end{enumerate}
\end{lem}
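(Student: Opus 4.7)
The approach is to construct the arc system in stages: first establishing (a) and (b) by elementary planar topology, then analyzing the preimage cut structure to verify the combinatorial condition (d), and finally using the freedom in the arc system to achieve the normalization (c).

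\emph{Step 1 (disjoint arcs).} Assuming $\omega_1,\ldots,\omega_{p+q}$ are distinct points on $\T$, one constructs the $\gamma_j$ sequentially: after drawing $\gamma_1,\ldots,\gamma_{j-1}$, the complement $\D\setminus\bigcup_{k<j}\gamma_k$ is still open and connected, contains $v_j$, and has $\omega_j$ on its boundary, so $\gamma_j$ can be drawn in it avoiding the previous arcs. This gives (a) and (b).

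\emph{Step 2 (preimage decomposition).} The cut disk $\overline{D}_{\mathrm{cut}}$ obtained from $\overline{\D}$ by slitting along $\bigcup\gamma_j$ is a topological closed disk whose open part $D_{\mathrm{cut}}=\D\setminus\bigcup\gamma_j$ is simply connected and avoids the critical values. The unramified cover $f\colon f^{-1}(D_{\mathrm{cut}})\to D_{\mathrm{cut}}$ of degree $p+q$ over this simply connected base is therefore trivial, so $f^{-1}(D_{\mathrm{cut}})$ decomposes into exactly $p+q$ connected components $U'$, each mapped homeomorphically onto $D_{\mathrm{cut}}$. Each $\overline{U'}$ is a topological closed disk, and the homeomorphism extends continuously. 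Its boundary cycle lifts the boundary cycle of $\overline{D}_{\mathrm{cut}}$, alternating between $p+q$ boundary arcs on $\T\cup\T_r$ and $p+q$ ``cut excursions'' along preimages of the $\gamma_j$'s. Each excursion is either type (a) (through a regular preimage of some $v_j$, returning to the same $\omega_j$-preimage) or type (b) (through some $c_j$, bridging two distinct $\omega_j$-preimages that may lie on different boundary circles).

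\emph{Step 3 ((d) by monodromy, and normalization (c)).} A counting check gives that $\T_r$ contributes $p(p+q)$ boundary arcs and $\T$ contributes $q(p+q)$ across all components, so on average each $\overline{U'}$ has $p$ arcs on $\T_r$ and $q$ on $\T$. Uniformity across components follows from the fact that the monodromy $\rho(\partial\D)\in S_{p+q}$ has cycle type $(p,q)$: the $p$-cycle permutes the fiber points associated to $\T_r$, the $q$-cycle those associated to $\T$. Tracking the ``circle label'' of consecutive arcs in $\partial\overline{U'}$ -- which changes only across type (b) excursions -- and matching with the cycle decomposition of $\rho(\partial\D)$ yields exactly $p$ arcs on $\T_r$ and $q$ on $\T$ in every $\overline{U'}$, giving (d). For (c), the two branches of $f^{-1}(\gamma_1)$ at $c_1$ terminate at two distinct preimages of $\omega_1=1$; by (d) combined with the local combinatorics at $c_1$, one of these endpoints lies on $\T_r$, and by modifying the isotopy class of $\gamma_1$ via Hurwitz-type moves (preserving disjointness from the other $\gamma_j$'s), this endpoint can be shifted among the $p$ points of $f^{-1}(1)\cap\T_r$ to coincide with the prescribed $b_1$.

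The main obstacle is the uniform distribution asserted in (d). Simple counting yields only the correct averages; establishing uniformity in \emph{each} piece requires the monodromy argument relating the $(p,q)$ cycle structure of $\rho(\partial\D)$ to the circle-switching behavior at critical excursions. In fact one probably must choose the cyclic pairing of $v_j$'s to $\omega_j$'s (equivalently, refine the sequential construction of Step 1) in a monodromy-compatible way to guarantee (d); the arbitrariness permitted in Step 1 must then be replaced by a choice dictated by the combinatorics of $f$.
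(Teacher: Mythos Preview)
Your proposal has a genuine gap at the crucial point. In Step~3 you first assert that property~(d) follows from the $(p,q)$ cycle structure of the monodromy $\rho(\partial\D)$, but then in your final paragraph you retract this and concede that ``one probably must choose the cyclic pairing of $v_j$'s to $\omega_j$'s \ldots\ in a monodromy-compatible way to guarantee~(d).'' This concession is correct and the earlier assertion is wrong: an arbitrary pairing satisfying (a)--(b) need \emph{not} give~(d). The paper in fact draws a figure of exactly such a failure. The monodromy of $\partial\D$ constrains only the global cycle structure; the distribution of $\T_r$-arcs versus $\T$-arcs within each individual piece $U'$ depends on the specific assignment $v_j\leftrightarrow\omega_j$, and this is what the lemma is really about. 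Since you never specify how to make that assignment, Step~3 does not prove~(d).

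The paper's proof supplies precisely the missing ingredient, and its order of operations is different from yours. It first handles~(c): it draws arcs $\eta_j$ from each $v_j$ to the \emph{single} point~$1$, shows directly that each critical component of $f^{-1}(\eta_j)$ joins $\T_r$ to $\T$, and uses this to relabel so that the critical component through $c_1$ lands at the prescribed~$b_1$. Only then does it spread the endpoints out to the $\omega_j$'s, and it fixes~(d) by an explicit iterative algorithm: for the first piece $U_j$ violating~(d), it swaps two of the arcs $\gamma_{j_{k(j)+1}}$ and $\gamma_{j_{q+1}}$ (exchanging their endpoints on $\T$ and their labels), checks that this repairs $U_j$ without disturbing $U_1,\ldots,U_{j-1}$, and repeats. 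Because $\gamma_1$ is never swapped, condition~(c) is preserved throughout. Your order (first~(d), then~(c) via Hurwitz moves on $\gamma_1$) would, even if~(d) were established, raise the additional unaddressed question of whether those moves preserve~(d).
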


\begin{figure}[!htpb]
  \setlength{\unitlength}{1mm}
  \centering
  \includegraphics[width=0.9\textwidth]{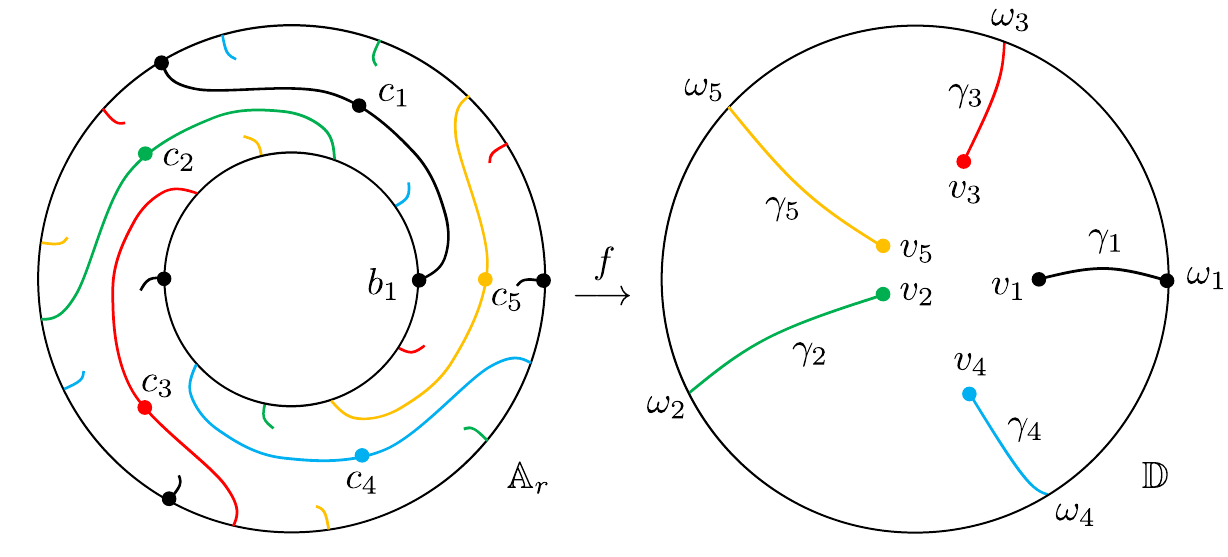}
  \caption{The $p+q$ smooth arcs $\gamma_1$, $\cdots$, $\gamma_{p+q}$ and their preimages under the branched covering map $f:\A_r\to\D$, where $p=2$ and $q=3$.}
  \label{Fig:cpreimagea}
\end{figure}

\begin{proof}
Let $\eta_j$ be a simple curve (including two end points) connecting $v_j$ with $1$ for $1\leq j\leq p+q$ such that $\eta_j\cap\eta_k=\{1\}$ for $1\leq j\neq k\leq p+q$ and $\eta_j\setminus\{1\}\subset\D$ for all $1\leq j\leq p+q$. Then $W:=\D\setminus\bigcup_{j=1}^{p+q}\eta_j$ is a simply connected domain and $f^{-1}(W)$ consists of $p+q$ simply connected domain $W_1$, $\cdots$, $W_{p+q}$. Moreover, $f:W_j\to W$ is a homeomorphism for all $1\leq j\leq p+q$.

We claim that for every $1\leq j\leq p+q$, the connected component of $f^{-1}(\eta_j)$ containing $c_j$ is a simple curve connecting a point in $f^{-1}(1)\cap\T_r$ with a point in $f^{-1}(1)\cap\T$. Otherwise, it is easy to verify that the number of connected components of $f^{-1}(W)$ would be less than $p+q$, which is a contradiction. For any given $b_1\in f^{-1}(1)\cap\T_r$, there exists $1\leq j=j(b_1)\leq p+q$, such that the connected component of $f^{-1}(\eta_j)$ containing $c_j$ is a simple curve connecting $b_1$ with a point in $f^{-1}(1)\cap\T$. Otherwise, there exists a unique connected component $W_j$ of $f^{-1}(W)$ whose boundary containing $b_1$ for which the restriction of $f$ on $W_j$ has degree at least two, which is a contradiction. Without loss of generality (by permutating the subscripts if necessary), we assume that $j=1$.

Let $\gamma_1:=\eta_1$. We define $p+q-1$ smooth arcs $\{\gamma_j:2\leq j\leq p+q\}$ such that every $\gamma_j$ connects $v_j$ with $\omega_j$ and they satisfy $\gamma_j\setminus\{\omega_j\}\subset\D$ and $\gamma_j\cap\gamma_k=\emptyset$ for any $1\leq j\neq k\leq p+q$. If $\{\gamma_j:1\leq j\leq p+q\}$ satisfy the statement (d), then the proof is finished. In the following, we assume that there exists at least one connected component $U'$ of $f^{-1}(\D\setminus\bigcup_{j=1}^{p+q}\gamma_j)$ which does not satisfy (d), see Figure \ref{Fig:cpreimageb}. In the following we adjust the positions of the curves $\{\gamma_j:2\leq j\leq p+q\}$ and exchange the subscripts such that statement (d) holds.

\begin{figure}[!htpb]
  \setlength{\unitlength}{1mm}
  \centering
  \includegraphics[width=0.9\textwidth]{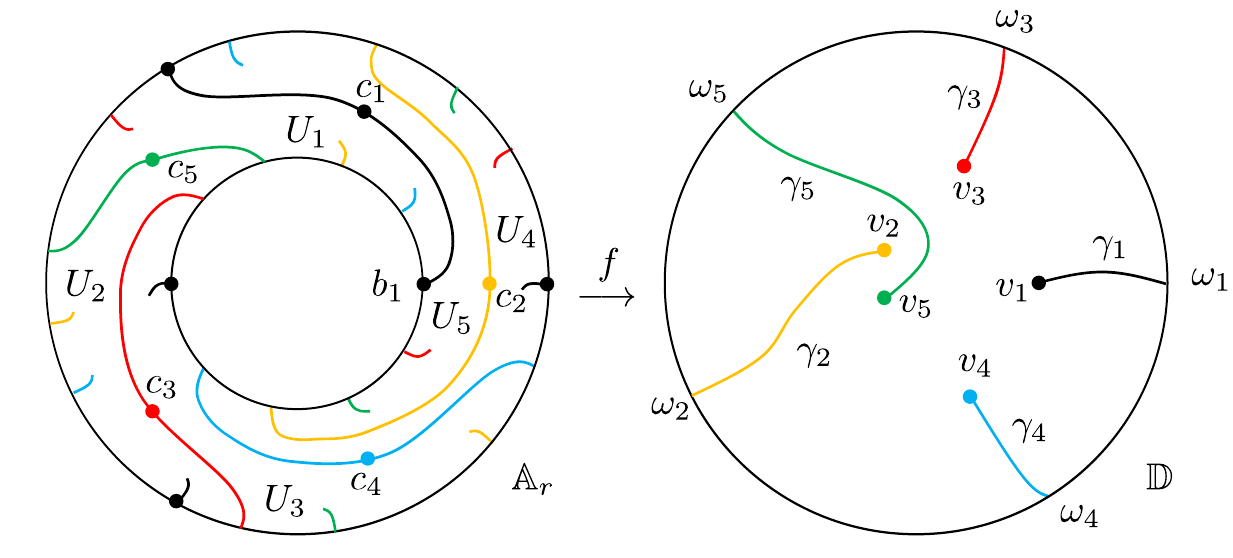}
  \caption{The candidates of $p+q$ smooth arcs $\gamma_1$, $\cdots$, $\gamma_{p+q}$ and their preimages under the branched covering map $f:\A_r\to\D$, where $p=2$ and $q=3$. As a connected component of $f^{-1}(\D\setminus\bigcup_{j=1}^{p+q}\gamma_j)$, $U_1$ does not satisfy (d). Compare Figure \ref{Fig:cpreimagea}.}
  \label{Fig:cpreimageb}
\end{figure}

For $1\leq j\leq p+q$, let $\beta_j$ be the connected component of $f^{-1}(\gamma_j)$ containing $c_j$. Note that $f^{-1}(\D\setminus\bigcup_{j=1}^{p+q}\gamma_j)$ consists of $p+q$ connected components. We label them by $U_1$, $U_2$, $\cdots$, $U_{p+q}$ anticlockwise, where $U_1$ is the component lying on the left of $\beta_1$ (recall that one end point of $\beta_1$ is $b_1$). For $1\leq j\leq p+q$, $\overline{U}_j\setminus(\T_r\cup\T)$ consists of $p+q$ connected components, whose closures are
\begin{equation}\label{equ:seq}
\beta_{j_1}, \alpha_{j_2},\cdots, \alpha_{j_{k(j)}}, \beta_{j_{k(j)+1}}, \alpha_{j_{k(j)+2}}, \cdots, \alpha_{j_{p+q}},
\end{equation}
where $f:\beta_{j_1}\to\gamma_{j_1}$ and $f:\beta_{j_{k(j)+1}}\to\gamma_{j_{k(j)+1}}$ are two-to-one, $f:\alpha_{j_\ell}\to\gamma_{j_\ell}$ is a homeomorphism for all $1\leq \ell\leq p+q$ ($\ell\neq 1$, $k(j)+1$) and $\bigcup_{\ell=1}^{p+q}\gamma_{j_\ell}=\bigcup_{j=1}^{p+q}\gamma_j$. Here the sequence \eqref{equ:seq} is labelled such that one end point of $\alpha_{j_\ell}$ attaches on $\T$ for $2\leq \ell\leq k(j)$ while one end point of $\alpha_{j_\ell}$ attaches on $\T_r$ for $k(j)+2\leq \ell\leq p+q$. Moreover, the curves in \eqref{equ:seq} are listed by the same order as $\{\omega_j:1\leq j\leq p+q\}$ on $\T$ which is induced by the homeomorpshim $f:U_j\to \D\setminus\bigcup_{\ell=1}^{p+q}\gamma_\ell$. This implies that $\big(f^{-1}(\cup_{\ell=1}^{p+q}\gamma_\ell\cup\T)\cap\overline{U}_j\big)\setminus\T_r$ and $\big(f^{-1}(\cup_{\ell=1}^{p+q}\gamma_\ell\cup\T)\cap\overline{U}_j\big)\setminus\T$, respectively, have $p+q-k(j)$ and $k(j)$ connected components.

\medskip
To guarantee the statement (d), we begin with the smallest $j\in[1,p+q]\cap\mathbb{N}$ for which $U'=U_j$ does not satisfy (d). Then $k(j)\neq q$ (see Figure \ref{Fig:cpreimageb} for the case $k(1)=2<q=3$). We replace the old critical value curves $\gamma_{j_{k(j)+1}}$ and $\gamma_{j_{q+1}}$ by a pair of new ones $\widetilde{\gamma}_{j_{k(j)+1}}$ and $\widetilde{\gamma}_{j_{q+1}}$ respectively, where $\widetilde{\gamma}_{j_{k(j)+1}}$ is a smooth arc connecting $v_{j_{q+1}}$ with $\omega_{j_{k(j)+1}}$ and $\widetilde{\gamma}_{j_{q+1}}$ is a smooth arc connecting $v_{j_{k(j)+1}}$ with $\omega_{j_{q+1}}$. Moreover, these two arcs are chosen such that $\widetilde{\gamma}_{j_{k(j)+1}}\setminus\{\omega_{j_{k(j)+1}}\}\subset\D$, $\widetilde{\gamma}_{j_{q+1}}\setminus\{\omega_{j_{q+1}}\}\subset\D$ and they are disjoint with each other and disjoint with other $\gamma_\ell$ for $\ell\neq j_{k(j)+1}$, $j_{q+1}$. Then we exchange the subscripts of $v_{j_{k(j)+1}}$ and $v_{j_{q+1}}$, and denote the new curves $\gamma_{j_{k(j)+1}}:=\widetilde{\gamma}_{j_{k(j)+1}}$ and $\gamma_{j_{q+1}}:=\widetilde{\gamma}_{j_{q+1}}$. Now we have a new set of critical value curves $\{\gamma_j:1\leq j\leq p+q\}$. One can define new $c_j$, $\beta_j$, $\alpha_{j_\ell}$ and $U_j$ etc, similarly as in the previous paragraph. Note that exchanging the subscripts of the old $v_{j_{k(j)+1}}$ and $v_{j_{q+1}}$ does not effect the validity of the statement (d) for $U'=U_\ell$ with $1\leq \ell\leq j-1$. This implies that for this new critical value curves $\{\gamma_j:1\leq j\leq p+q\}$, statement (d) holds for the new $U_\ell$, where $1\leq \ell\leq j$.

\medskip
If statement (d) holds for the new $U_\ell$ for all $j+1\leq \ell\leq p+q$, then the proof is finished. Otherwise, let $j'\in[j+1,p+q]$ be the smallest integer for which $U'=U_{j'}$ does not satisfy (d). Then we have a similar sequence as \eqref{equ:seq} and $k(j')\neq q$. Similar to the previous argument, we replace the previous critical value curves $\gamma_{j'_{k(j')+1}}$ and $\gamma_{j'_{q+1}}$ by a pair of newer critical value curves $\widetilde{\gamma}_{j'_{k(j')+1}}$ and $\widetilde{\gamma}_{j'_{q+1}}$ respectively, where $\widetilde{\gamma}_{j'_{k(j')+1}}$ is a smooth arc connecting $v_{j'_{q+1}}$ with $\omega_{j'_{k(j')+1}}$ and $\widetilde{\gamma}_{j'_{q+1}}$ is a smooth arc connecting $v_{j'_{k(j')+1}}$ with $\omega_{j'_{q+1}}$. Moreover, these two arcs are chosen such that $\widetilde{\gamma}_{j'_{k(j')+1}}\setminus\{\omega_{j'_{k(j')+1}}\}\subset\D$, $\widetilde{\gamma}_{j'_{q+1}}\setminus\{\omega_{j'_{q+1}}\}\subset\D$ and they are disjoint with each other and disjoint with other $\gamma_\ell$ for $\ell\neq j'_{k(j')+1}$, $j'_{q+1}$. We exchange the subscripts of $v_{j'_{k(j')+1}}$ and $v_{j'_{q+1}}$, and denote $\gamma_{j'_{k(j')+1}}:=\widetilde{\gamma}_{j'_{k(j')+1}}$ and $\gamma_{j'_{q+1}}:=\widetilde{\gamma}_{j'_{q+1}}$. Then we have a newer critical value curves $\{\gamma_j:1\leq j\leq p+q\}$. One can define newer $c_j$, $\beta_j$, $\alpha_{j_\ell}$ and $U_j$ etc, similarly as before. Moreover, for this newer critical value curves $\{\gamma_j:1\leq j\leq p+q\}$, statement (d) holds for the newer $U_\ell$, where $1\leq \ell\leq j'$.

\medskip
Inductively, after finite steps, one can obtain a brand new critical value curves $\{\gamma_j:1\leq j\leq p+q\}$ such that they not only satisfy (a)-(c), but also satisfy (d) for the corresponding new  $U'=U_j$, where $1\leq j\leq p+q$.
\end{proof}

For $q\geq 1$, we define a \textit{partial-twist} map $T_q:\overline{\A}_r\to\overline{\A}_r$ as:
\begin{equation}
T_q(z):=z e^{\frac{2\pi\ii}{q}\frac{|z|-r}{1-r}},
\end{equation}
where $z\in\overline{\A}_r$. Note that $T_q$ fixes the inner boundary of $\A_r$. It is easy to see that
$T_q^{\circ q}(z)=z e^{ 2\pi\ii\frac{|z|-r}{1-r}}$
is a \textit{full-twist} of $\overline{\A}_r$. See Figure \ref{Fig:partial-twist}.

\begin{figure}[!htpb]
  \setlength{\unitlength}{1mm}
  \centering
  \includegraphics[width=0.75\textwidth]{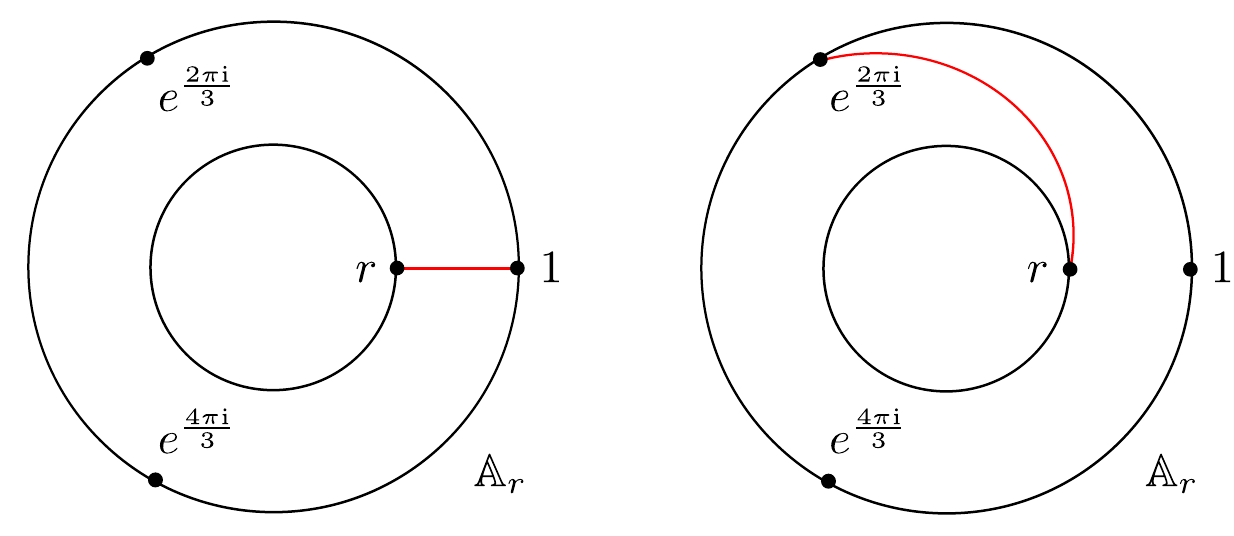}
  \caption{The segment $[r,1]$ and its partial-twist under $T_q$ with $q=3$.}
  \label{Fig:partial-twist}
\end{figure}


\begin{lem}\label{lem:homotopy-Cui}
Suppose that $f_0$, $f_1:\overline{\A}_r\to \overline{\D}$ are continuous maps satisfying
\begin{itemize}
\item $f_0$, $f_1:\A_r\to \D$ are both branched covering maps of degree $p+q$;
\item $f_0(z)=f_1(z)=z^q$ for $z\in\T$ and $f_0(z)=f_1(z)=r^p/z^p$ for $z\in\T_r$.
\end{itemize}
Then there exist an integer $k$ and a continuous map
\begin{equation}
\Phi:[0,1]\times \overline{\A}_r\to \overline{\D}
\end{equation}
such that
\begin{itemize}
\item $\Phi(0,\cdot)=f_0\circ T_q ^{\circ k }$ and $\Phi(1,\cdot)=f_1$;
\item $\Phi(t,\cdot)|_{\partial\A_r}=f_0|_{\partial\A_r}$ for all $t\in[0,1]$; and
\item $\forall\, t\in[0,1]$, $\Phi(t,\cdot):\overline{\A}_r\to \overline{\D}$ is a branched covering of degree $p+q$.
\end{itemize}
Further, if both $f_0$ and $f_1$ are $C^1$-quasiregular, then $\Phi(t,\cdot):\A_r\to\D$ can be chosen such that it is $C^1$-quasiregular for all $t\in[0,1]$ and $\partial\Phi(t,z)/\partial z$, $\partial\Phi(t,z)/\partial \overline{z}$ are continuous for $(t,z)\in[0,1]\times\A_r$.
\end{lem}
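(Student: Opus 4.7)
\medskip
\noindent\textbf{Proof proposal for Lemma \ref{lem:homotopy-Cui}.}
The plan is to show that branched coverings $\overline{\A}_r\to\overline{\D}$ of degree $p+q$ with the prescribed boundary values form, up to homotopy rel $\partial\A_r$, a $\Z$-torsor, on which precomposition by $T_q$ acts as the generator; then the correct $k$ will be the (essentially unique) integer making $f_0\circ T_q^{\circ k}$ and $f_1$ lie in the same class, after which the homotopy will be constructed explicitly via the cell decomposition provided by Lemma \ref{lem:twist}. First I would apply Lemma \ref{lem:twist} to both $f_0$ and $f_1$, using the common base point $b_1\in f_0^{-1}(1)\cap\T_r=f_1^{-1}(1)\cap\T_r$ (this set is the same because both maps restrict to $z\mapsto r^p/z^p$ on $\T_r$). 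This yields two systems of critical value arcs $\{\gamma_j^{(i)}\}$ and two decompositions of $\overline{\A}_r$ into $p+q$ topological disks $U_\ell^{(i)}$ with $f_i$ injective on each.

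Next I would define the twist invariant. Pick any embedded arc $\sigma\subset\overline{\A}_r$ from $b_1\in\T_r$ to a fixed point $z^\ast\in\T$ that avoids all critical points; since the endpoints map under either $f_i$ to $1$ and $(z^\ast)^q$ respectively, and critical values are avoided, the image $f_i(\sigma)$ is a curve in $\overline{\D}\setminus\{0\}$ joining $1$ to $(z^\ast)^q$ whose homotopy class rel endpoints in $\overline{\D}\setminus\{0\}$ is determined by an integer $w_i\in\Z$. A standard argument shows that $w_i$ is independent of the specific arc $\sigma$ and depends only on the homotopy class of $f_i$ rel $\partial\A_r$ through degree-$(p+q)$ branched covers with the given boundary values; conversely, two such maps with equal invariants are homotopic (the classifying space is a $K(\Z,1)$ in the relevant sense). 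A direct computation on the radial $\sigma$ shows that replacing $f_0$ by $f_0\circ T_q$ changes the winding contribution by exactly $1$, so $w_{f_0\circ T_q^{\circ k}}=w_{f_0}+k$; therefore I would set $k:=w_{f_1}-w_{f_0}$.

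With this choice of $k$, the task reduces to producing a homotopy between $g_0:=f_0\circ T_q^{\circ k}$ and $f_1$ rel $\partial\A_r$ through branched covers of degree $p+q$. Applying Lemma \ref{lem:twist} to $g_0$ and to $f_1$ (with the same base point $b_1$), I obtain two cell decompositions of $\overline{\A}_r$ whose combinatorial types, by the matching of twist invariants, are identical: the cyclic order of the boundary arcs $\{\beta_{j},\alpha_{j_\ell}\}$ along $\T_r$ and along $\T$ agrees for $g_0$ and $f_1$. A standard surface-topology argument then produces a homeomorphism $\psi:\overline{\A}_r\to\overline{\A}_r$ that is the identity on $\partial\A_r$, maps the $g_0$-cell structure onto the $f_1$-cell structure, and satisfies $f_1\circ\psi=g_0$ on the one-skeleton. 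Because $\psi$ fixes $\partial\A_r$ pointwise and preserves an essential curve system, it is isotopic to the identity rel $\partial\A_r$ through a family $\psi_t$ (this uses only that $\A_r$ has trivial mapping class group rel both boundary circles once the cell structure is preserved). On each cell, $g_0$ and $f_1\circ\psi$ are homeomorphisms onto the same slit disk with matching boundary parametrisations, so the classical straight-line (or Alexander-trick) homotopy on each disk interpolates them. Concatenating this cell-wise homotopy with the homotopy $f_1\circ\psi_t$ gives the desired $\Phi$.

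The main obstacle is, of course, the combinatorial bookkeeping in the previous paragraph, namely verifying that equality of the twist invariants translates into an isomorphism of the two cell decompositions produced by Lemma \ref{lem:twist}. This requires a careful comparison of how the boundary arcs of each $U_\ell^{(i)}$ sit along $\T_r$ and $\T$, using property (d) of that lemma; the appendix in \cite{McM98b} handles an analogous disk-to-disk situation, and I expect the annulus-to-disk case to follow along the same lines once the $\Z$-torsor structure is set up. For the final $C^1$-quasiregular refinement, I would choose the isotopy $\psi_t$ and the cell-wise interpolation smoothly in $t$ (which is possible because the Alexander trick can be smoothed inside each disk away from $\partial\A_r$), and note that the quasiregular constant remains uniformly bounded since $g_0$ and $f_1$ are both $C^1$-quasiregular and the interpolation is $C^1$ in $(t,z)$ off the finite union $\bigcup_{j}\partial(\text{cell})$; continuity of $\partial\Phi/\partial z$ and $\partial\Phi/\partial\overline z$ in $(t,z)\in[0,1]\times\A_r$ then follows.
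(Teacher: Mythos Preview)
Your overall architecture---apply Lemma \ref{lem:twist} to both maps, extract an integer invariant, twist by $T_q^{\circ k}$ to match invariants, then build the homotopy from the resulting cell decompositions---is the same as the paper's. But the implementation of two of your steps is faulty.

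First, the ``twist invariant'' $w_i$ is not well defined as you describe it. You take an arc $\sigma\subset\overline{\A}_r$ avoiding the critical \emph{points} and look at the homotopy class of $f_i(\sigma)$ in $\overline{\D}\setminus\{0\}$; but nothing forces $f_i(\sigma)$ to miss $0$, and puncturing the target at $0$ is not canonical (zero plays no role for a general branched cover). The paper sidesteps this by reading the integer directly from Lemma \ref{lem:twist}: the critical arc $\beta_1^t$ (the component of $f_t^{-1}(\gamma_1^t)$ through $c_1^t$) starts at the common base point $b_1=r\in\T_r$ and ends at some $e^{2\pi\ii j_t/q}\in\T$; one then chooses $k$ so that $T_q^{-k}(\beta_1^0)$ is homotopic to $\beta_1^1$ in $\overline{\A}_r$ rel endpoints. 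This replaces your winding-in-the-target with an arc comparison in the source, where the annulus topology genuinely records an integer.

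Second, your assertion that $\psi$ is isotopic to the identity because ``$\A_r$ has trivial mapping class group rel both boundary circles once the cell structure is preserved'' is not correct as stated: the mapping class group of the annulus rel both boundary circles is $\Z$, generated by the Dehn twist, so you must actually \emph{prove} that $\psi$ represents the trivial class---and that is precisely the content of having chosen $k$ correctly in the previous step. The paper handles this cleanly by building, simultaneously, an isotopy $\chi(t,\cdot)$ of the target $\overline{\D}$ carrying the admissible curve system $\{\gamma_j^0\}$ to $\{\gamma_j^1\}$ and an isotopy $\psi(t,\cdot)$ of the source $\overline{\A}_r$ (identity on $\partial\A_r$) so that the diagram
\[
\chi(1,\cdot)\circ (f_0\circ T_q^{\circ k})=f_1\circ\psi(1,\cdot)
\]
holds on the decomposition induced by Lemma \ref{lem:twist}; the homotopy $\Phi$ is then defined by $\chi(t,\Phi(0,z))=\Phi(t,\psi(t,z))$. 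This avoids both the cell-wise Alexander trick and any appeal to mapping class group computations. The $C^1$-quasiregular refinement in the paper is obtained simply by choosing the admissible arcs orthogonal to $\T$ and taking $\chi,\psi$ smooth in $(t,z)$.
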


\begin{proof}
According to Riemann-Hurwitz's formula, $f_0$ (resp. $f_1$) has $d$ critical points in $\A_r$ and $d$ critical values in $\D$ (counted with multiplicity).
Without loss of generality, we assume that the $d$ critical values are different. In particular, the corresponding critical points are all simple (i.e., with local degree two). Otherwise, one can make a small continuous perturbation on $f_0$ (resp. $f_1$).

In the following we call the set of critical value curves $\{\gamma_j:1\leq j\leq p+q\}$ in Lemma \ref{lem:twist} \textit{admissible}. Note that
\begin{equation}\label{equ:f0-1}
f_0^{-1}(1)=\{e^{2\pi\ii\frac{j}{q}}:1\leq j\leq q\} \cup \{r e^{2\pi\ii\frac{\ell}{p}}:1\leq \ell\leq p\}.
\end{equation}
We set $b_1:=r\in\T_r$.
Let $CP_t=\{c_j^t: 1\leq j\leq p+q\}$ be the critical points of $f_t$ and $CV_t=\{v_j^t=f_t(c_j^t):1\leq j\leq p+q\}$ the critical values, where $t=0$ or $1$. Let $\{\gamma_j^t:1\leq j\leq p+q\}$ be a set of admissible critical value curves of $f_t$, where $t=0$ or $1$.
By the definition of admissible critical value curves, there is a continuous map
\begin{equation}\label{equ:phi}
\chi:[0,1]\times \overline{\D}\to \overline{\D}
\end{equation}
such that
\begin{itemize}
\item For every $t\in[0,1]$, $\chi(t,\cdot):\overline{\D}\to\overline{\D}$ is a homeomorphism;
\item $\chi(0,\cdot)=\id$ and $\chi(t,\cdot)|_{\T}=\id$ for all $t\in[0,1]$; and
\item $\chi(1,\gamma_j^0)=\gamma_j^1$ and $\chi(1,v_j^0)=v_j^1$, where $1\leq j\leq p+q$.
\end{itemize}
For $0<t<1$ and $1\leq j\leq p+q$, we denote $\gamma_j^t:=\chi(t,\gamma_j^0)$ and $v_j^t:=\chi(t,v_j^0)$.

According to Lemma \ref{lem:twist}, the annulus $\overline{\A}_r$ has a nice partition by the preimages of admissible critical value curves. For $t=0$ or $1$, let $\beta_j^t$ be the connected component of $f_t^{-1}(\gamma_j^t)$ containing $c_j^t$, where $1\leq j\leq p+q$. Note that one end point of $\beta_1^0$  and that of $\beta_1^1$ are both $b_1=r\in\T_r$. By \eqref{equ:f0-1} and $f_0|_{\partial\A_r}=f_1|_{\partial\A_r}$, we assume that the other end points of $\beta_1^0$ and $\beta_1^1$ are $e^{2\pi\ii\frac{j_0}{q}}$ and $e^{2\pi\ii\frac{j_1}{q}}$ respectively. Then there exists $k'\in\Z$ such that $T_q^{-k}(\beta_1^0)$ is homotopic to $\beta_1^1$ in $\overline{\A}_r$ rel $\{r,e^{2\pi\ii\frac{j_1}{q}}\}$ for $k:=(j_0-j_1)+qk'$.

Note that $f_0\circ T_q^{\circ k}|_{\partial\A_r}=f_0|_{\partial\A_r}$, $f_0\circ T_q^{\circ k}:\A_r\to\D$ is also a branched covering of degree $p+q$ and $\{\gamma_j^t:1\leq j\leq p+q\}$ is a set of admissible critical value curves of $f_0\circ T_q^{\circ k}$. There exists a continuous map
\begin{equation}\label{equ:phi}
\psi:[0,1]\times \overline{\A}_r\to \overline{\A}_r
\end{equation}
such that
\begin{itemize}
\item For every $t\in[0,1]$, $\psi(t,\cdot):\overline{\A}_r\to\overline{\A}_r$ is a homeomorphism;
\item $\psi(0,\cdot)=\id$ and $\psi(t,\cdot)|_{\partial\A_r}=\id$ for all $t\in[0,1]$; and
\item For $1\leq j\leq p+q$, the following diagram is commutative:
\begin{equation}
\begin{CD}
(\overline{\A}_r;T_q^{-k}(\beta_j^0),T_q^{-k}(c_j^0)) @>\psi(1,\cdot)>> (\overline{\A}_r;\beta_j^1, c_j^1) \\
@VVf_0\circ T_q^{\circ k} V   @VVf_1 V \\
(\overline{\D};\gamma_j^0,v_j^0) @>\chi(1,\cdot)>> (\overline{\D};\gamma_j^1,v_j^1).
\end{CD}
\end{equation}
\end{itemize}
Therefore, there exists a continuous map
\begin{equation}
\Phi:[0,1]\times \overline{\A}_r\to \overline{\D}
\end{equation}
such that
\begin{itemize}
\item $\Phi(0,\cdot)=f_0\circ T_q ^{\circ k }$ and $\Phi(1,\cdot)=f_1$;
\item $\Phi(t,\cdot)|_{\partial\A_r}=f_0|_{\partial\A_r}$ for all $t\in[0,1]$; and
\item $\chi(t,\Phi(0,z))=\Phi(t,\psi(t,z))$ for all $t\in[0,1]$ and $z\in\overline{\A}_r$.
\end{itemize}
Moreover, it is easy to see that $\Phi(t,\cdot):\overline{\A}_r\to \overline{\D}$ is a branched covering of degree $p+q$ for all $t\in[0,1]$.

\medskip
Under the assumption that both $f_0$ and $f_1$ are $C^1$-quasiregular, if we choose the admissible critical value curves $\{\gamma_j^t:1\leq j\leq p+q\}$ $(t=0,1)$ such that every $\gamma_j^t$ is orthogonal to $\T$ at $\omega_j$, then the two maps $\chi:[0,1]\times\overline{\D}\to\overline{\D}$ and $\psi:[0,1]\times\overline{\A}_r\to\overline{\A}_r$  can be chosen such that
\begin{itemize}
\item $\partial\chi(t,z)/\partial z$, $\partial\chi(t,z)/\partial \overline{z}$ are continuous for $(t,z)\in[0,1]\times\D$; and
\item $\partial\psi(t,z)/\partial z$, $\partial\psi(t,z)/\partial \overline{z}$ are continuous for $(t,z)\in[0,1]\times\A_r$.
\end{itemize}
This implies that $\Phi(t,\cdot):\A_r\to\D$ is $C^1$-quasiregular for all $t\in[0,1]$ and $\partial\Phi(t,z)/\partial z$, $\partial\Phi(t,z)/\partial \overline{z}$ are continuous for $(t,z)\in[0,1]\times\A_r$.
\end{proof}

\section*{Acknowledgements}

We are grateful to Guizhen Cui, Kevin Pilgrim, Xiaoguang Wang and Yongcheng Yin for helpful discussions and comments, and to the referee for careful reading and helpful comments.

\bibliographystyle{amsalpha}
\bibliography{E:/Latex-model/Ref1}

\end{document}